\documentclass{article}
\usepackage[lang = british]{ems-jems}

\newcommand{\bA}{{\mathfrak A}}
\newcommand{\bB}{{\mathfrak B}}
\newcommand{\bC}{{\mathfrak C}}
\newcommand{\bD}{{\mathfrak D}}
\newcommand{\struct}[1]{\mathfrak{#1}}
\DeclareMathOperator{\plus}{plus}
\newcommand{\bBB}{{\mathfrak B_\infty \pi_\infty}}
\newcommand{\bBP}{\mathbf B_\infty \pi_\infty}
\newcommand{\bBBn}{{\mathfrak B_n \pi_\infty}}

\newcommand{\bBBBn}{{\mathbf B_n \pi_\infty}}
\newcommand{\bM}{{\mathfrak M}}
\newcommand{\bN}{{\mathfrak N}}

\newcommand{\bP}{{\mathfrak P}}
\newcommand{\bQ}{{\mathfrak Q}}

\newcommand{\bT}{{\mathfrak T}}

\DeclareMathOperator{\Pol}{Pol}

\DeclareMathOperator{\Csp}{CSP}
\DeclareMathOperator{\Two}{Two}

\DeclareMathOperator{\QNU}{QNU}
\DeclareMathOperator{\WNU}{WNU}
\DeclareMathOperator{\NAE}{NAE}
\DeclareMathOperator{\QHM}{QHM}
\DeclareMathOperator{\QJ}{QJ}
\DeclareMathOperator{\pr}{pr}

\newtheorem{theorem}{Theorem}
\numberwithin{theorem}{section}
\newtheorem{lemma}[theorem]{Lemma}
\newtheorem{proposition}[theorem]{Proposition}
\newtheorem{corollary}[theorem]{Corollary}
\newtheorem{conjecture}{Conjecture}
\newtheorem{definition}{Definition}
\numberwithin{definition}{section}

\begin{document}

\title{The lattice of clones of self-dual operations collapsed}
\titlemark{The lattice of clones of self-dual operations collapsed}

\emsauthor{1}{Manuel Bodirsky}{M.~Bodirsky}

\emsauthor{2}{Albert Vucaj}{A.~Vucaj}

\emsauthor{3}{Dmitriy Zhuk}{D.~Zhuk}

\emsaffil{1}{Institute of Algebra, Technische Universit\"at Dresden, Dresden, Germany \email{manuel.bodirsky@tu-dresden.de}}

\emsaffil{2}{Institute of Algebra, Technische Universit\"at Dresden, Dresden, Germany \email{albert.vucaj@tu-dresden.de}}

\emsaffil{3}{Department of Mechanics and Mathematics, Lomonosov Moscow State University, Moscow, Russia \email{Zhuk@intsys.msu.ru}}

\classification{03B50, 08A70, 08B05}

\keywords{Clone, clone homomorphism, minor-preserving map, primitive positive construction, linear Mal'cev condition, three-valued logic}

\begin{abstract}
We prove that there are continuum many clones on a three-element set even if they are considered up to \emph{homomorphic equivalence}. The clones we use to prove this fact are clones consisting of  \emph{self-dual operations}, i.e., operations that preserve the relation $\{(0,1),(1,2),(2,0)\}$.
However, there are only countably many such  clones when considered up to equivalence with respect to \emph{minor-preserving maps} instead of clone homomorphisms.  
We give a full description of the set of clones of self-dual operations, ordered by the existence of minor-preserving maps. Our result can also be phrased as a statement about structures on a three-element set: we give a full description of the structures containing the relation $\{(0,1),(1,2),(2,0)\}$, ordered by primitive positive constructability, because  there is a minor-preserving map from the polymorphism clone of a finite structure $\bA$ to the polymorphism clone of a finite structure $\bB$ if and only if there is a primitive positive construction of $\bB$ in $\bA$.
\end{abstract}

\maketitle

\begin{abstract}
We prove that there are continuum many clones on a three-element set even if they are considered up to \emph{homomorphic equivalence}. The clones we use to prove this fact are clones consisting of  \emph{self-dual operations}, i.e., operations that preserve the relation $\{(0,1),(1,2),(2,0)\}$.
However, there are only countably many such  clones when considered up to equivalence with respect to \emph{minor-preserving maps} instead of clone homomorphisms.  
We give a full description of the set of clones of self-dual operations, ordered by the existence of minor-preserving maps. Our result can also be phrased as a statement about structures on a three-element set: we give a full description of the structures containing the relation $\{(0,1),(1,2),(2,0)\}$, ordered by primitive positive constructability, because  there is a minor-preserving map from the polymorphism clone of a finite structure $\bA$ to the polymorphism clone of a finite structure $\bB$ if and only if there is a primitive positive construction of $\bB$ in $\bA$.
\end{abstract}

\keywords{Clone, clone homomorphism, minor-preserving map, primitive positive construction, linear Mal'cev condition, three-valued logic.}

\section{Introduction}
Post~\cite{Post} classified all clones of operations over the Boolean domain $\{0,1\}$; there are only countably many. 
By the famous result of Geiger~\cite{Geiger} and Bodnar\v{c}uk, Kalu\v{z}nin, Kotov and Romov~\cite{BoKaKoRo}, Post's result might also be viewed as a classification of all structures with domain $\{0,1\}$ up to \emph{primitive positive interdefinability}. Yanov and Muchnik~\cite{YanovMuchnik} showed that already over the three-element set $\{0,1,2\}$ there are $2^\omega$ many operation clones. Subsequent research in universal algebra therefore focussed on understanding particular aspects of clone lattices on finite domains, for example on the description of maximal clones~\cite{RosenbergMaximal} or minimal clones~\cite{MinClones,Csakany84,Rosenberg}. 

One might still hope to classify all operation clones on finite domains up to some equivalence relation so that equivalent clones share many of the properties that are of interest in universal algebra. Perhaps the most important equivalence relation on clones is \emph{homomorphic equivalence}: two clones 
$\mathbf C$ and $\mathbf D$ are called \emph{homomorphically equivalent} if there exists a clone homomorphism from $\mathbf C$ to $\mathbf D$ and vice versa. An attractive feature of homomorphic equivalence is that it also relates clones on different domains.  
The homomorphism order on clones has been studied intensively by Garcia and Taylor~\cite{GarciaTaylor}; it is closely related to the study of Mal'cev conditions and therefore at the heart of universal algebra. 

Every operation clone on a finite domain $B$ is the \emph{polymorphism clone} of some relational structure 
$\mathfrak B$ with domain $B$, denoted $\Pol({\mathfrak B})$. 
If ${\mathfrak A}$ and ${\mathfrak B}$ are finite structures, then there is a clone homomorphism from $\Pol({\mathfrak B})$ to $\Pol({\mathfrak A})$ if and only if ${\mathfrak A}$ has a \emph{primitive positive interpretation} in ${\mathfrak B}$. 
Note that homomorphic equivalence of clones is strictly coarser than \emph{clone isomorphism}: 
for example the two clones 
\begin{align*}
    &\Pol\Big(\{0,1\};\{(0,0,1),(0,1,0),(1,0,0)\}\Big), \mbox{ and } 
    \\&\Pol\Big(\{0,1\};\{0,1\}^3 \setminus \{(0,0,0),(1,1,1)\}\Big)
\end{align*} 
are homomorphically equivalent, but not isomorphic (the first clone has only one unary operation, while the latter has two). 

We will prove that already over a three-element set, there are $2^\omega$ many clones up to homomorphic equivalence (Corollary~\ref{cor:cont}); we are not aware of a reference in the literature for this fundamental fact. 
In our proof we use results about clones of  
\emph{self-dual operations} (see Section~\ref{sec:duals}), i.e., operations that preserve the relation 
\[C_3 \coloneqq \{(0,1),(1,2),(2,0)\}.\] Marchenkov~\cite{Marchenkov83} proved that over $\{0,1,2\}$ there are uncountably many clones of self-dual operations, and Zhuk~\cite{Zhuk15} presented a complete description of their lattice. 
This lattice has a remarkably rich structure (see Figure~\ref{fig:Dima}) and is large in the sense that ${\mathbf C}_3 \coloneqq 
\Pol(\{0,1,2\};C_3)$ is one of the 18 \emph{maximal} clones on three elements~\cite{Jablonskij}. The cardinality of the lattice of clones contained in a fixed 
maximal clone on three elements is uncountable with the exception of one maximal clone, i.e., the clone of all linear functions
(\cite{DemetrovicsHannak,Marchenkov83}; see 
Theorem 8.2.2  in~\cite{Lau}).
To the best of our knowledge ${\mathbf C}_3$ is the only maximal clone on three elements so that the lattice of its subclones is uncountable and completely described.

Motivated by research on the complexity of  constraint satisfaction problems, a coarser equivalence relation on the class of clones on a finite set has been introduced recently~\cite{wonderland}. 
We call two clones $\mathbf C$ and ${\mathbf D}$ \emph{minor equivalent} if 
there exists a minor-preserving map\footnote{Minor-preserving maps are also called \emph{minion homomorphisms}; since we do not define minions here, we stick with minor-preserving maps.} 
from ${\mathbf C}$ to ${\mathbf D}$ and vice versa. Informally, a minor-preserving map from 
$\mathbf C$ to ${\mathbf D}$ is a function that preserves arities and composition with projections;
unlike clone homomorphisms, it need not preserve the projections or general composition; a formal definition can be found in Section~\ref{sect:minor-pres}. If $\mathbf D$ is a clone over a finite domain, then it is easy to see that there is a minor-preserving map from $\mathbf C$ to $\mathbf D$ if and only if every finite set of height-one identities (also called \emph{minor condition}; see Section~\ref{sect:minor-cond}) that holds in ${\mathbf C}$ also holds in $\mathbf D$.
We will see that in the context 
of classifying clones on finite sets up to minor equivalence we may even focus on \emph{idempotent} minor conditions (also known as \emph{idempotent strong linear Mal'cev conditions}).
 
If ${\mathfrak A}$ and ${\mathfrak B}$ are finite structures and $\Pol({\mathfrak B})$ has a minor-preserving map to $\Pol({\mathfrak A})$, then 
there exists a so-called \emph{primitive positive construction} of ${\mathfrak A}$ in ${\mathfrak B}$~\cite{wonderland}.
The application in constraint satisfaction is that
if ${\mathfrak A}$ and ${\mathfrak B}$ additionally have finite relational signatures then there is a (linear-time, logarithmic space) reduction from $\Csp({\mathfrak A})$ to $\Csp({\mathfrak B})$. Indeed, the recently proven complexity dichotomy for finite-domain constraint satisfaction problems can be expressed using primitive positive constructions as follows: 
$\Csp({\mathfrak B})$ is NP-complete if the structure $(\{0,1\};\{(1,0,0),(0,1,0),(0,0,1)\})$ has a primitive positive construction in ${\mathfrak B}$, and is in P otherwise~\cite{BulatovFVConjecture,ZhukFVConjecture}. Note that primitive positive constructions can be used to study unresolved open questions in finite-domain constraint satisfaction, such as the question which finite-domain CSPs are in the complexity classes L, NL, or NC~\cite{LaroseZadori}. 

Our main result is a complete description of the clones of self-dual operations on three elements up to minor equivalence (Theorem~\ref{thm:main}). There are only countably many equivalence classes (Corollary~\ref{cor:collapse}). 
Our proof is based on the description of these clones by Zhuk~\cite{Zhuk15}. For any two clones of self-dual operations that are not minor equivalent we specify an  idempotent strong linear Mal'cev condition that holds in one clone but not the other. Interestingly, all but one of the conditions that we use are well known and play an important role in universal algebra, such as the existence of \emph{minority operations}, \emph{majority operations}, \emph{Mal'cev operations}, \emph{near unanimity operations of arity $k$}, 
\emph{J\'onsson chains}, \emph{Hagemann-Mitschke chains}, \emph{binary symmetric operations}, and
ternary \emph{weak near unanimity operations}.
The only new minor condition that we need to separate  clones of self-dual operations is
\begin{align}
r(x,x,x,y) & \approx r(x,x,x,x)  \quad \text{ and } \label{eq:guarded3cyclic} \\
r(x_1,x_2,x_3,y) & \approx r(x_2,x_3,x_1,y) \nonumber
\end{align}
which we call the \emph{guarded 3-cyclic condition}. See Figure~\ref{fig:pict}. 

Bulatov~\cite{BulatovFVConjecture} in his proof of the Feder-Vardi CSP dichotomy conjecture used graphs whose edges have three different colours; in the simpler case where every 2-element subset of the domain is primitive positive definable~\cite{Bulatov-Conservative-Revisited}, 
\emph{red edges} stand for 2-element substructures with a binary symmetric polymorphism, \emph{yellow edges} stand for 2-element substructures with a majority polymorphism but no binary symmetric polymorphism, and 
\emph{blue edges} stand for 2-element substructures with a Mal'cev polymorphism, 
but no binary symmetric and no majority polymorphism. We are inspired by this colour convention when assigning colours to the elements of the lattice drawn in Figure~\ref{fig:pict}. 

\begin{figure}
\centering
\includegraphics[scale=.5]{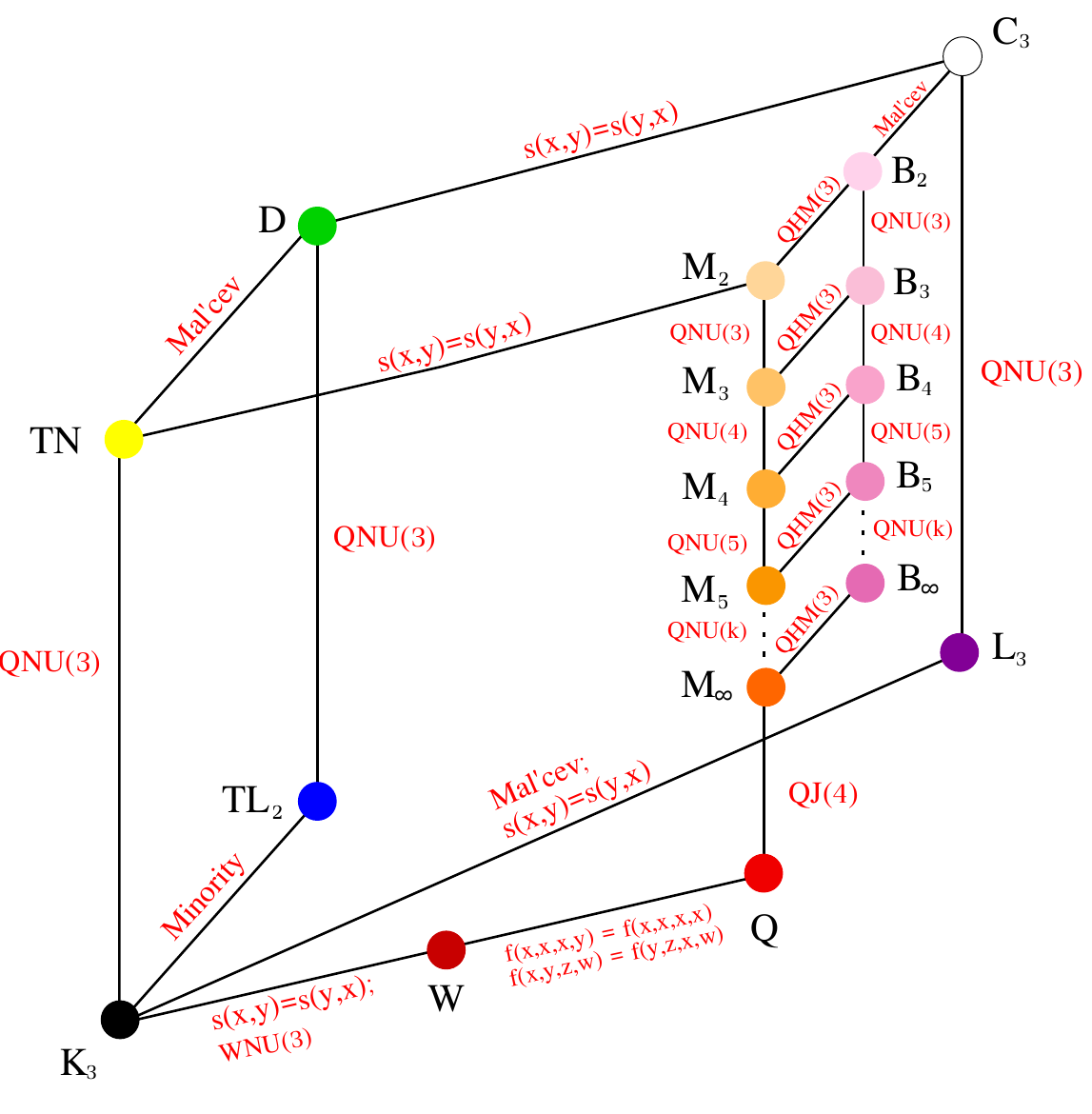}
\caption{The lattice of clones of self-dual operations up to minor equivalence.} 
\label{fig:pict}
\end{figure}

\section{Preliminaries}
\label{sect:prelims}
Let $A$ be a set. For $n \in {\mathbb N}$, we write ${\mathbf O}^{(n)}_A$ for the set
of all operations on $A$ of arity $n$ and ${\mathbf O}_A \coloneqq \bigcup_{n \in {\mathbb N}} {\mathbf O}^{(n)}_A$ for the set of all operations on $A$.
An \emph{operation clone on $A$} is a subset $\mathbf C$ of ${\mathbf O}_A$ which is closed under composition of operations and which contains all projections. If $S \subseteq {\mathbf O}_A$, then $[S]$ denotes the clone generated by $S$, i.e., the smallest clone that contains $S$. 

\subsection{Polymorphism Clones}
It is well-known and easy to see that 
every clone on a finite domain $A$ is of the form $\Pol({\mathfrak A})$ for some relational structure ${\mathfrak A}$ with domain $A$. 
Let ${\mathfrak A}$ be a structure with domain $A$ and relational signature $\tau$. 
If $A$ is finite, a well-known result~\cite{Geiger,BoKaKoRo} states that a relation $R \subseteq A^k$ is preserved by all operations of $\Pol({\mathfrak A})$ if and only if 
$R$ is \emph{primitive positive (pp-) definable} in ${\mathfrak A}$, i.e., if there exists a $\tau$-formula $\phi(y_1,\dots,y_k)$ of the form \[\exists x_1,\dots,x_n (\psi_1 \wedge \cdots \wedge \psi_m)\]
where $\psi_1,\dots,\psi_m$ are \emph{atomic}, i.e., of the form $z=z'$ for some $z,z' \in \{x_1,\dots,x_n,y_1,\dots,y_k\}$, or of the form $S(\bar z)$ for some variables $\bar z$ from  $\{x_1,\dots,x_n,y_1,\dots,y_k\}$ and $S \in \tau$, 
which \emph{defines} $R$, i.e.,
\[R = \{(a_1,\dots,a_k) \mid \bA \models \phi(a_1,\dots,a_k) \};\]
in this case $\phi$ is called a \emph{pp-definition} of $R$ in $\bA$.

\subsection{Clone Homomorphisms} 
Let ${\mathbf C}$ and ${\mathbf D}$ be operation clones. A function from ${\mathbf C}$ to ${\mathbf D}$ is called a \emph{clone homomorphism} if it preserves the arity of the operations, composition,  and maps the $i$-th projection of arity $n$ in ${\mathbf C}$ to the $i$-th projection of arity $n$ in ${\mathbf D}$. The existence of homomorphisms between polymorphism clones can be characterised in terms of \emph{primitive positive interpretations} of structures. 

\begin{definition}
Let $\bA$ and $\bB$ be structures. A \emph{primitive positive (pp) interpretation of $\bA$ in $\bB$ of dimension $d \in {\mathbb N}$} is a partial surjective map $\Gamma$ from $B^d$ to $A$ such that for every relation $R$ defined by an atomic formula over $\bA$ the relation  
$\Gamma^{-1}(R)$ is pp-definable in $\bB$ (a $k$-ary relation on $B^n$ is regarded as a $kn$-ary relation on $B$). 
\end{definition}

The following theorem is based on Birkhoff's theorem. 

\begin{theorem}[see, e.g., Corollary 6.5.16 in~\cite{Book}]\label{thm:pp-interpret}
Let ${\mathfrak A}$ and ${\mathfrak B}$ be relational structures with finite domains. Then the following are equivalent. 
\begin{itemize}
\item there is a clone homomorphism from $\Pol({\mathfrak B})$ to $\Pol({\mathfrak A})$. 
\item ${\mathfrak A}$ has a primitive positive interpretation in ${\mathfrak B}$. 
\end{itemize}
\end{theorem}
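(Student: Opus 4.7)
The plan is to prove the two implications separately. The direction from a pp-interpretation to a clone homomorphism will be a direct construction, while the converse will hinge on Birkhoff's HSP theorem together with Geiger's theorem (on pp-definability of invariant relations on finite domains).

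For the direction pp-interpretation $\Rightarrow$ clone homomorphism, suppose $\Gamma \colon B^d \to A$ is a pp-interpretation of $\bA$ in $\bB$. I would define $\xi \colon \Pol(\bB) \to \Pol(\bA)$ as follows. Given an $n$-ary $f \in \Pol(\bB)$ and $a_1, \dots, a_n \in A$, choose tuples $\bar b_i \in \Gamma^{-1}(a_i)$ and set $\xi(f)(a_1, \dots, a_n) \coloneqq \Gamma(f(\bar b_1, \dots, \bar b_n))$, where $f$ acts coordinatewise on $B^d$. Well-definedness follows because $\Gamma^{-1}({=}_A)$ is pp-definable in $\bB$ and hence preserved by $f$, so the choice of preimages is immaterial; applying the same pp-definability argument to each relation of $\bA$ shows that $\xi(f) \in \Pol(\bA)$. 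Preservation of arities, projections, and composition is immediate from the coordinatewise definition of $\xi(f)$.

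For the harder converse direction, suppose $\xi \colon \Pol(\bB) \to \Pol(\bA)$ is a clone homomorphism. View $\mathbb B \coloneqq (B; \Pol(\bB))$ and $\mathbb A \coloneqq (A; \Pol(\bA))$ as algebras in a common signature, where the operation symbol for $f \in \Pol(\bB)$ is interpreted as $f$ in $\mathbb B$ and as $\xi(f)$ in $\mathbb A$. Then saying that $\xi$ is a clone homomorphism is precisely saying that every identity satisfied by $\mathbb B$ is satisfied by $\mathbb A$. By Birkhoff's HSP theorem, this places $\mathbb A$ in the variety generated by $\mathbb B$, and since both $A$ and $B$ are finite one obtains some $d \in \mathbb N$, a subalgebra $\mathbb S \leq \mathbb B^d$, and a surjective homomorphism $h \colon \mathbb S \to \mathbb A$.

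Finally, I would convert the pair $(\mathbb S, h)$ into a pp-interpretation. The set $S \subseteq B^d$ is a subuniverse of $\mathbb B^d$, hence invariant under $\Pol(\bB)$, so by Geiger's theorem it is pp-definable in $\bB$. Put $\Gamma \coloneqq h$ on $S$ and undefined elsewhere; this is partial and surjective onto $A$. For each atomic formula $R(y_1, \dots, y_k)$ interpreted in $\bA$ (including $y = y'$), the preimage $\Gamma^{-1}(R) \subseteq B^{dk}$ equals $(h^k)^{-1}(R^{\bA}) \cap S^k$, which is invariant under $\Pol(\bB)$ because $h$ is a homomorphism and $R^{\bA}$ is preserved by $\Pol(\bA)$; a further application of Geiger's theorem supplies the required pp-definition in $\bB$. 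The main obstacle I anticipate is the clean passage through Birkhoff's theorem, which requires translating the purely combinatorial statement that $\xi$ is a clone homomorphism into identity preservation in a common algebraic signature; once this translation is in place, the rest unwinds mechanically via Geiger.
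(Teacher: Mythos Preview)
The paper does not actually prove this theorem: it is stated with an external reference (Corollary~6.5.16 in~\cite{Book}) and accompanied only by the remark that ``the following theorem is based on Birkhoff's theorem.'' Your proposal is the standard proof and matches that hint precisely: the easy direction is the explicit construction of $\xi$ from $\Gamma$, and the substantive direction goes through Birkhoff's HSP theorem to realise $\mathbb A$ as a quotient of a finite-power subalgebra of $\mathbb B$, followed by Geiger's theorem to convert invariance into pp-definability. There is nothing to compare against, and your argument is correct as outlined; the only place where a grader might ask for one more line is the finiteness of the exponent $d$ (e.g., take $d = |B|^{|A|}$ via the free algebra on $|A|$ generators), and in the forward direction you should also note that the domain of $\Gamma$ is pp-definable so that $f(\bar b_1,\dots,\bar b_n)$ lands in it before you apply $\Gamma$.
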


\subsection{Minor-Preserving Maps}
\label{sect:minor-pres}
If $f \colon A^n \to A$ and $\pi \colon \{1,\dots,n\} \to \{1,\dots,r\}$, then $f_\pi \colon A^r \to A$ is given by
\[ f_\pi(a_1,\dots,a_r) \coloneqq f(a_{\pi(1)},\dots,a_{\pi(n)}).\]
Let $\mathbf C$ and $\mathbf D$ be clones and let $\xi \colon \mathbf C \rightarrow \mathbf D$ be a function that preserves arities. We say that $\xi$ is \emph{minor-preserving} if,
for every $n$-ary operation $f \in \mathbf C$ and every $\pi\colon \{1,\dots,n\} \rightarrow \{1,\dots,r\}$, it holds
\begin{equation*}
\xi(f_\pi) = \xi(f)_\pi. 
\end{equation*}
The existence of minor-preserving maps between polymorphism clones can be characterised in terms of \emph{primitive positive (pp) constructions}. 
Let $\tau$ be a relational signature. 
Two relational $\tau$-structures $\bA$ and $\mathfrak B$ are \emph{homomorphically equivalent} if there exists a homomorphism from $\bA$ to $\mathfrak B$ and vice-versa. We say that $\bA$ is a \emph{pp-power (of dimension $d \in {\mathbb N}$)} of $\bB$ if it is isomorphic to a structure with domain $B^d$ whose relations are pp-definable from $\bB$.
We say that $\bA$ has a \emph{pp-construction} in $\bB$ if $\bA$ is homomorphically equivalent to a pp-power of $\bB$.
The connection between pp-constructability and
minor-preserving maps is given by the following theorem. 

\begin{theorem}[\cite{wonderland}]\label{thm:wonderland}
Let $\bA$ and $\mathfrak B$ be finite relational structures. Then the following are equivalent:
\begin{enumerate}
	\item $\bA$ has a pp-construction in $\bB$.
	\item there exists a minor-preserving map from $\Pol(\bB)$ to $\Pol(\bA)$.
\end{enumerate}
\end{theorem}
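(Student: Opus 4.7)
The plan is to prove each direction separately. The implication $(1)\Rightarrow(2)$ is a routine layer-by-layer verification; the converse $(2)\Rightarrow(1)$ is the substantive content and requires constructing a pp-power of $\bB$ homomorphically equivalent to $\bA$ directly from the data of a given minor-preserving map $\xi\colon\Pol(\bB)\to\Pol(\bA)$.

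For $(1)\Rightarrow(2)$, I would decompose a pp-construction of $\bA$ in $\bB$ into its three elementary building blocks and exhibit an appropriate map at each step. First, if $\bA^{+}$ is a pp-definable expansion of a structure, then by the Geiger/Bodnar\v{c}uk--Kalu\v{z}nin--Kotov--Romov theorem every polymorphism already preserves the pp-definable relations, so the identity is in fact a clone homomorphism. Second, if $\bA^{(d)}$ is a pp-power of $\bB$ of dimension $d$, then the componentwise lift $f\mapsto f^{[d]}$ sending an $n$-ary $f\in\Pol(\bB)$ to its coordinatewise action on $(B^d)^n$ is a clone homomorphism $\Pol(\bB)\to\Pol(\bA^{(d)})$. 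Third, if $h\colon\bA\to\bA'$ and $h'\colon\bA'\to\bA$ witness homomorphic equivalence, then the rule $f\mapsto h'\circ f\circ(h,\dots,h)$ defines a minor-preserving map $\Pol(\bA)\to\Pol(\bA')$; note that this map is typically \emph{not} a clone homomorphism since composition and projections need not be respected. Composing the three layers yields the desired minor-preserving map.

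For the reverse direction, set $m\coloneqq|A|$ and fix an enumeration $A=\{a_1,\dots,a_m\}$. The plan is to build a pp-power $\bC$ of $\bB$ of dimension $m$, hence with domain $B^m$, that is homomorphically equivalent to $\bA$. Choose $m$ ``generators'' $e_1,\dots,e_m\in B^m$ (a natural choice is $e_i$ consisting of some fixed element of $B$ in all coordinates except one distinguished coordinate indexed by $i$), and for each $k$-ary relation $R$ of $\bA$ define $R^{\bC}\subseteq(B^m)^k$ as the smallest subset that (i)~contains $(e_{i_1},\dots,e_{i_k})$ for every $(a_{i_1},\dots,a_{i_k})\in R$, and (ii)~is closed under componentwise application of every polymorphism of $\bB$. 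By the Geiger/BKKR theorem, condition (ii) is equivalent to $R^{\bC}$ being pp-definable from $\bB$, so $\bC$ is genuinely a pp-power of $\bB$; moreover, $a_i\mapsto e_i$ is by construction a homomorphism $\bA\to\bC$.

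The main obstacle is to produce the reverse homomorphism $\bC\to\bA$, and this is exactly where $\xi$ enters. Any element of the generated part of $B^m$ can be written as $f^{[m]}(e_{j_1},\dots,e_{j_n})$ for some $n$-ary $f\in\Pol(\bB)$ and some index sequence, and the natural candidate is to send it to $\xi(f)(a_{j_1},\dots,a_{j_n})\in A$. The two subtle verifications are well-definedness and the homomorphism property: whenever the same element of $B^m$ admits two presentations, these differ by the application of a minor, and the defining identity $\xi(f_\pi)=\xi(f)_\pi$ is precisely what is needed to conclude that the two candidate values in $A$ coincide. The same identity, applied along the columns of a tuple in $R^{\bC}$, shows that the relation is respected. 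After extending arbitrarily to the remaining elements of $B^m$ (which do not appear in any tuple of any $R^{\bC}$), the result is a homomorphism $\bC\to\bA$, and combined with the homomorphism $\bA\to\bC$ from the previous paragraph this yields the pp-construction of $\bA$ in $\bB$.
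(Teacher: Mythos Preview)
The paper does not prove this theorem; it is quoted from \cite{wonderland} and used as a black box throughout. Your argument for $(1)\Rightarrow(2)$ is the standard three-step decomposition and is correct.

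For $(2)\Rightarrow(1)$ there is a genuine gap in the well-definedness step. With your suggested generators $e_i\in B^m$ (a fixed element $c$ in every coordinate except coordinate $i$, where the value is some $c'\neq c$), the $m\times m$ matrix whose columns are $e_1,\dots,e_m$ has as its rows only the $m$ tuples in $\{c,c'\}^m$ with a single occurrence of $c'$. Two $m$-ary operations $F,G\in\Pol(\bB)$ can therefore satisfy $F^{[m]}(e_1,\dots,e_m)=G^{[m]}(e_1,\dots,e_m)$ while $F\neq G$: they only need to agree on those $m$ particular inputs, not on all of $B^m$. Minor-preservation does let you reduce any two presentations $f^{[m]}(e_{j_1},\dots,e_{j_n})$ and $g^{[m]}(e_{k_1},\dots,e_{k_p})$ to $m$-ary minors $F=f_\pi$ and $G=g_\sigma$ evaluated at $(e_1,\dots,e_m)$, but it does \emph{not} force $F=G$ as operations, and hence gives no reason why $\xi(F)(a_1,\dots,a_m)=\xi(G)(a_1,\dots,a_m)$. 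The remedy is to take dimension $d=|B|^{m}$ rather than $m$: enumerate the maps $A\to B$ as $h_1,\dots,h_d$ and set $(e_i)_\ell\coloneqq h_\ell(a_i)$. The matrix with columns $e_1,\dots,e_m$ then contains \emph{every} tuple of $B^m$ among its rows, so $F^{[d]}(e_1,\dots,e_m)=G^{[d]}(e_1,\dots,e_m)$ genuinely forces $F=G$ in $\Pol(\bB)$; from that point your well-definedness and homomorphism arguments go through verbatim.
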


If ${\mathbf C},{\mathbf D}$ are clones, we write 
${\mathbf C} \leq_{\operatorname{minor}} {\mathbf D}$ if there exists a minor-preserving map from ${\mathbf C}$ to ${\mathbf D}$. 
Clearly, the relation $\leq_{\operatorname{minor}}$ is transitive; it follows from Theorem~\ref{thm:wonderland} that pp-constructability is transitive as well. 

\subsection{Minor conditions}
\label{sect:minor-cond}
A \emph{minor condition} is a finite set of expressions of the form $s \approx t$ where
$s$ and $t$ are terms with exactly one function symbol (sometimes, such expressions are also called \emph{height-one identities}). For example, \[\{f(x,y) \approx f(y,x)\}\] 
(sometimes just written $f(x,y) \approx f(y,x)$ without brackets) is a minor condition while $\{f(x,f(y,x)) \approx f(x,y)\}$ is not since the term on the left contains two function symbols. 
More examples of concrete minor conditions can be found in Section~\ref{sect:mnor}. 
A clone $\mathbf C$ \emph{satisfies} a minor condition if every function symbol can be instantiated by a function from $\mathbf C$ such that the resulting 
identities hold for every assignment of values from the domain of ${\mathbf C}$ to the variables in the terms. 
Clearly, if $\mathbf C$ satisfies a minor condition,
and $\xi \colon {\mathbf C} \to \mathbf D$ is a minor-preserving map, then $\mathbf D$ satisfies the same minor condition. Conversely, a simple compactness argument shows that if every minor condition that holds in $\mathbf C$ also holds in $\mathbf D$ and $\mathbf D$ is a clone on a finite domain then there exists a minor-preserving map from $\mathbf C$ to $\mathbf D$ (see, e.g.,~\cite{Book}).

\subsection{Notation}
Let $A$ be a set and $t  = (t_1,\dots,t_k) \in A^k$. 
For $I = \{i_1,\dots,i_n\} \subseteq \{1,\dots,k\}$
with $i_1<\dots<i_k$ we write $\pr_I(t)$ for the tuple $(t_{i_1},\dots,t_{i_n})$. For $R \subseteq A^k$ we write 
$\pr_I(R) \coloneqq \{\pr_I(t) \mid t \in R\}$ for the projection of $R$ to the indices from $I$. If $I = \{i\}$ for $i \in \{1,\dots,k\}$ 
then $\pr_i(R)$ denotes $\pr_I(R)$. 
A relation $R \subseteq A^k$ is called \emph{subdirect} if 
$\pr_i(R) = A$ 
for every $i \in \{1,\dots,k\}$. 
We use the symbol $+_3$ for addition modulo $3$ and $+_2$ for addition modulo $2$. 

An important notational convention in this article is that if $f \colon A^k \to A$ is an operation on $A$ and $t_1,\dots,t_k \in A^m$, then 
$f(t_1,\dots,t_k)$ denotes the tuple in $A^m$ obtained from applying $f$ componentwise. 
We also use the convention that if $f \colon A \to B$ is a function and $S \subseteq A$, then $f(S)$ denotes the set $\{f(a) \mid a \in S\}$ and $f^{-1}(S)$ denotes the set $\{f^{-1}(a) \mid a \in S\}$. Note that the two conventions can be combined. 

\section{A continuum of clones on three elements up to homomorphic equivalence}
The following relations are defined on the set $\{0,1,2\}$
\begin{align} 
C_3 & \coloneqq \{(0,1),(1,2),(2,0)\}, \\
R^=_3 & \coloneqq \{(x,y,z) \mid x \in \{0,1\} \wedge (x=0 \Rightarrow y=z) \}\label{def:relatioR3}, \\
B_2 \; & \coloneqq \{(1,0),(0,1),(1,1)\}. 
\end{align}
Marchenkov~\cite{Marchenkov83}
proved that there are $2^\omega$ many distinct operation clones between 
\begin{align*}
& {\mathbf W} \coloneqq \Pol(\{0,1,2\};C_3,R^=_3)\\
\text{ and } \quad &
{\mathbf B}_2 \coloneqq \Pol(\{0,1,2\};C_3,B_2); 
\end{align*}
our terminology is a simplified version of the terminology used in~\cite{Zhuk15}. 
Note that 
\begin{itemize}
    \item 

The relation $C_3$ and the relation $C_3$ with permuted variables can be written as 
$y = x +_3 1$, 
and 
$y = x +_3 2$, respectively;

so we freely use the terms $x +_3 1$, and $x +_3 2$ in primitive positive definitions over structures that contain the relation $C_3$. 

\item ${\mathbf B}_2 \subseteq {\mathbf W}$, because $B_2$ is pp-definable in the structure $(\{0,1,2\};C_3,R^=_3)$ by the formula 
\[ \exists z_1,z_2 \big (R^=_3(x,y,z_1) \wedge R^=_3(y,x,z_2) \wedge C_3(z_1,z_2) \big). 
\]
\end{itemize}

\begin{figure}
    \centering
    \includegraphics[scale=0.2]{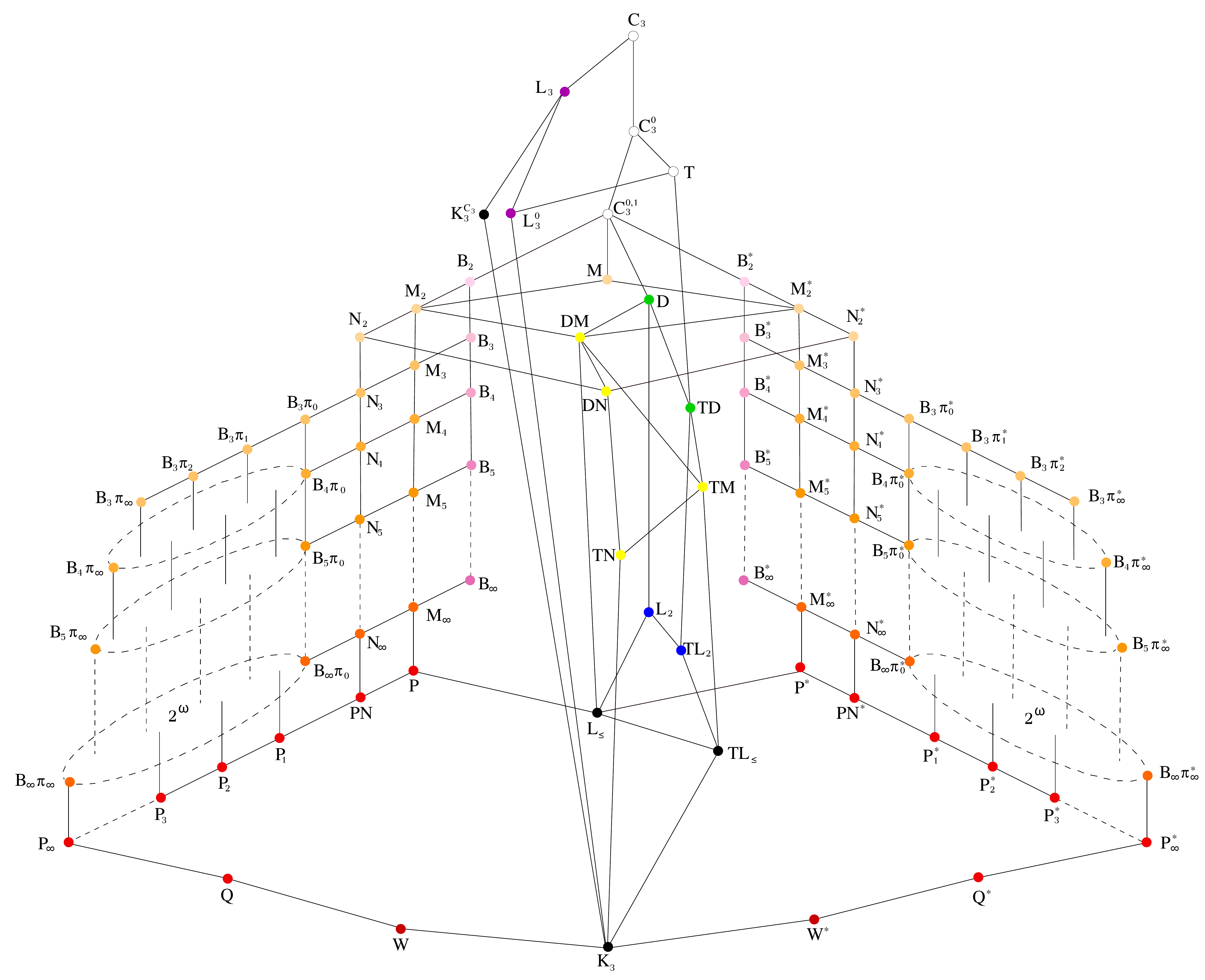}
    \caption{The lattice of clones of self-dual operations ordered by inclusion. 
    }
    \label{fig:Dima}
\end{figure}

We prove that there are $2^\omega$ many of these clones even when considered up to homomorphic equivalence (Corollary~\ref{cor:cont}). Using the facts mentioned earlier
this follows from the following theorem.

\begin{theorem}\label{thm:non-collapse}
Let $\bA$ and $\bB$ be structures such that 
$\Pol(\bA)$ and $\Pol(\bB)$ contain
$\bf W$
and are contained in 
${\bf B}_2$. 
If there is a clone homomorphism from $\Pol(\bB)$ to $\Pol(\bA)$, 
then $\Pol(\bB) \subseteq \Pol(\bA)$. 
\end{theorem}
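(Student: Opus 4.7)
By Theorem~\ref{thm:pp-interpret}, a clone homomorphism from $\Pol(\bB)$ to $\Pol(\bA)$ is equivalent to a primitive positive interpretation $\Gamma \colon B^d \to A$ of $\bA$ in $\bB$ of some dimension $d \in \mathbb{N}$. Writing $D \coloneqq \operatorname{dom}(\Gamma)$ and $K \coloneqq \ker(\Gamma)$, both $D$ and $K$ are pp-definable in $\bB$, and for each relation $R$ of $\bA$ the preimage $\Gamma^{-1}(R)$ is pp-definable in $\bB$. On a finite set, the Galois correspondence between $\Pol$ and $\Inv$ turns the goal $\Pol(\bB) \subseteq \Pol(\bA)$ into the statement that every relation of $\bA$ is pp-definable in $\bB$, so the strategy is to refine $\Gamma$ into such a pp-definition, using that both clones sit between $\mathbf{W}$ and $\mathbf{B}_2$.

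The first ingredient is a unary analysis. Every clone $\mathbf{C}$ with $\mathbf{W} \subseteq \mathbf{C} \subseteq \mathbf{B}_2$ has exactly the three unary operations $\mathrm{id},\sigma,\sigma^2$, where $\sigma(x) \coloneqq x +_3 1$, because any $C_3$-preserving self-map of $\{0,1,2\}$ is fully determined by its value at $0$ and must be a power of $\sigma$. Hence $\sigma \in \Pol(\bB)$, its componentwise action on $B^d$ preserves $D$ and $K$, and therefore descends to a unary polymorphism $\bar\sigma$ of $\bA \cong D/K$, which again lies in $\{\mathrm{id},\sigma,\sigma^2\}$. I would first rule out $\bar\sigma = \mathrm{id}$ by combining the $\sigma$-invariance of the pp-definable relation $\Gamma^{-1}(C_3)$ with the fact that $C_3$ is a nontrivial cyclic permutation on $A$; this forces $\bar\sigma$ to be nontrivial. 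After composing $\Gamma$ with a suitable power of $\sigma$ on the target side (which does not alter $\Pol(\bA)$), one may then assume $\bar\sigma = \sigma$.

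The second ingredient is the reduction of $\Gamma$ to dimension one. With $\bar\sigma = \sigma$, every $\sigma$-orbit of $D$ (which has size three, because $\sigma$ acts freely on $B^d$) is a $K$-transversal. Using the pp-definable relations $D$, $K$, and $\Gamma^{-1}(C_3)$, I would construct a pp-definable three-element subset of $B^d$ that is in bijection with $A$ via $\Gamma$, and then project onto a single coordinate of $B$ to collapse the interpretation to dimension one. In this dimension-one form, a pp-definition in $\bB$ of each relation of $\bA$ is immediate, whence $\Pol(\bB) \subseteq \Pol(\bA)$ by the Galois connection.

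The main obstacle is precisely this dimension reduction: showing that the existence of a pp-interpretation already forces the existence of a pp-definition. The argument rests on the rigidity of the pp-definable relations from $(\{0,1,2\}; C_3, B_2)$ coming from the explicit description of the subclone lattice by Marchenkov~\cite{Marchenkov83} and Zhuk~\cite{Zhuk15}: the $\mathbf{W}$-invariance tightly constrains the possible kernels $K$, while the containment $\Pol(\bA),\Pol(\bB) \subseteq \mathbf{B}_2$ forbids the kinds of collapses in powers of $B$ that could otherwise support a genuinely higher-dimensional interpretation. It is in these two steps -- excluding $\bar\sigma = \mathrm{id}$ and compressing $\Gamma$ to a single coordinate -- that the full hypothesis on $\Pol(\bA)$ and $\Pol(\bB)$ is genuinely used.
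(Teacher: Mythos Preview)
Your high-level strategy is sound: translate the clone homomorphism into a pp-interpretation $\Gamma$ and then argue that it can be replaced by a pp-definition. But the two places you flag as ``obstacles'' are precisely where the proof lives, and your proposed justifications do not go through as stated. For the first step, the claim that $\bar\sigma \neq \mathrm{id}$ does \emph{not} follow from ``$\sigma$-invariance of $\Gamma^{-1}(C_3)$ together with $C_3$ being a nontrivial cycle'': if $\bar\sigma = \mathrm{id}$, applying $\sigma$ componentwise to a pair $(b,b') \in \Gamma^{-1}(C_3)$ just returns another pair with the same $\Gamma$-image, which is perfectly consistent with $C_3$. Nothing in that observation produces a contradiction. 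The paper's actual argument at this point is concrete and uses the operation $\vee_3 \in \mathbf{W}$ in an essential way: one invokes Lemma~\ref{lem:Zhuk} (subdirect relations preserved by $\vee_3$ are conjunctions of $C_3$-atoms) to analyse $S = \Gamma^{-1}(C_3)$, and in the degenerate case (your $\bar\sigma = \mathrm{id}$ scenario corresponds to the paper's Case~1, where the full-projection coordinates of $S$ carry no $C_3$-constraint) one builds three tuples $t,t',t'' \in S$ whose entries pairwise miss one value, so that $\vee_3$ becomes associative/commutative on them and forces $\vee_3(\vee_3(t,t'),t'')$ to have equal halves, contradicting $\Gamma(\cdot) \in C_3$.

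For the dimension reduction, your plan to ``pp-define a three-element transversal and project to one coordinate'' is not carried out, and it is not clear it can be: a pp-definable subset of $B^d$ is closed under all of $\Pol(\bB) \supseteq \mathbf{W}$, and $\mathbf{W}$ contains the genuinely binary operation $\vee_3$, so a three-element pp-definable set would have to be closed under $\vee_3$ componentwise, which is a strong constraint you never address. The paper avoids this entirely. In the non-degenerate case (Case~2), Lemma~\ref{lem:Zhuk} yields coordinates $i \in \{1,\dots,d\}$ and $j \in \{d+1,\dots,2d\}$ linked by $C_3$ or equality inside $S$ (the same-half case is excluded by minimality of $d$); from this one reads off directly that $\pr_i(\Gamma^{-1}(a))$ is a singleton $\{f(a)\}$ for each $a$, and then checks via $\vee_3$-preservation of $f(B_2)$ that the permutation $f$ is cyclic or the identity, hence has pp-definable graph. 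So the missing ingredient in your plan is not vague ``rigidity from the subclone lattice'' but the specific structural Lemma~\ref{lem:Zhuk} combined with the $\vee_3$-trick; once you have those, neither $\bar\sigma$ nor a pp-definable transversal is needed.
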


In the proof of this theorem we need the fact that $\mathbf W$ contains the so-called \emph{paper-scissor-stone operation}, which we denote by $\vee_3$,  defined by 
\begin{align*}
\vee_3(x,y) \coloneqq \begin{cases}
x & \text{ if } x=y \\
1 & \text{ if } \{x,y\} = \{0,1\} \\
2 & \text{ if } \{x,y\} = \{1,2\} \\
0 & \text{ if } \{x,y\} = \{0,2\}. 
\end{cases}
\end{align*}
We also need the following lemma which states that if a subdirect relation has a pp-definition in 
$(\{0,1,2\};C_3,R^=_3)$, then we can also find a pp-definition without existential quantifiers and without using $R^=_3$. 

\begin{lemma}[\cite{Zhuk15}, Lemma 17] 
\label{lem:Zhuk}
Suppose that $R \subseteq \{0,1,2\}^n$ is a subdirect relation which is preserved by $\vee_3$. Then $R$ can be defined by a conjunction of atomic formulas over ${\mathfrak C}_3 \coloneqq (\{0,1,2\};C_3)$. 
\end{lemma}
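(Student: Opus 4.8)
The plan is to argue that a subdirect relation $R \subseteq \{0,1,2\}^n$ preserved by $\vee_3$ is already determined by its binary projections, each of which must be definable from $C_3$ alone. First I would observe that $R$ is preserved by $\vee_3$, which is idempotent and whose binary reduct behaves like a "rock--paper--scissors" semilattice-like operation on each two-element subset of $\{0,1,2\}$: for any $a \neq b$, the value $\vee_3(a,b)$ is the unique third option distinct from the losing one, so $\{a,b\}$ is never closed under $\vee_3$ unless $\{a,b\}$ equals $\{a,b\}$ with the output forced. The key structural consequence I would extract is that, because $\vee_3$ acts transitively and "mixes" the three values, any subdirect relation closed under $\vee_3$ cannot genuinely depend on more than pairs of coordinates in a nontrivial way — more precisely, I would prove that $R$ equals the intersection $\bigcap_{i<j} \pr_{\{i,j\}}^{-1}\big(\pr_{\{i,j\}}(R)\big)$ of the preimages of its $2$-dimensional projections. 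This "2-decomposability" is the heart of the argument.

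To establish 2-decomposability I would proceed as follows. Let $S \coloneqq \bigcap_{i<j}\pr_{\{i,j\}}^{-1}(\pr_{\{i,j\}}(R))$; clearly $R \subseteq S$, so suppose for contradiction that some tuple $t \in S \setminus R$ exists, chosen with as many coordinates as possible agreeing with some fixed tuple of $R$. Using that every pair-projection of $t$ lies in the corresponding projection of $R$, together with subdirectness and closure under $\vee_3$, I would build a tuple in $R$ closer to $t$, contradicting the choice. The combinatorial engine here is that $\vee_3$ can "repair" a mismatch in a single coordinate while keeping all other coordinates fixed: given $r, r' \in R$ agreeing on all coordinates except one, $\vee_3(r,r')$ agrees with them everywhere except possibly that coordinate, and by cycling through the three ways of combining one can realize any value that is pairwise-consistent. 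I expect this step to be the main obstacle: making the "repair" precise requires a careful case analysis of how $\vee_3$ behaves on two-element vs.\ three-element coordinate sets, and one must check that the needed witnesses in $R$ actually exist (which is where subdirectness and, likely, a minimality/induction on $n$ are used).

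Once 2-decomposability is in hand, it remains to show each binary projection $P_{ij} \coloneqq \pr_{\{i,j\}}(R) \subseteq \{0,1,2\}^2$ is definable by a conjunction of atomic $C_3$-formulas, i.e.\ by a conjunction of equations $x = y$ and $C_3$-atoms $y = x +_3 1$ (equivalently $y = x +_3 2$) in the two variables. Since $P_{ij}$ is preserved by $\vee_3$ and (by subdirectness of $R$) has full projections to both coordinates, I would enumerate the $\vee_3$-closed subdirect binary relations on $\{0,1,2\}$: these are exactly $\{0,1,2\}^2$ (defined by the empty conjunction), the diagonal $\{(a,a)\}$, and the two "shifted diagonals" $\{(a,a+_3 1)\}$ and $\{(a,a+_3 2)\}$ — any other subdirect binary relation contains a pair not closed under $\vee_3$. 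Each of these four relations is visibly a conjunction of atomic $C_3$-formulas. Assembling these conjunctions over all pairs $i<j$ (substituting the appropriate variables $x_i, x_j$ and including the conjuncts $x_i = x_i +_3 1$-type atoms as needed), we obtain a quantifier-free conjunction of atomic formulas over $\mathfrak{C}_3$ defining $S = R$, which is the claim. The only mild subtlety in this last part is to confirm the enumeration of binary $\vee_3$-closed subdirect relations is complete, which is a short finite check.
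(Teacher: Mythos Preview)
The paper does not give its own proof of this lemma; it simply cites Zhuk's paper (Lemma~17 there). So there is nothing in this paper to compare your argument against. That said, your two-step strategy --- (i) prove that $R$ is determined by its binary projections, then (ii) classify the subdirect $\vee_3$-closed binary relations --- is the natural route and your step~(ii) is correct: a short check confirms that the only subdirect $\vee_3$-closed subsets of $\{0,1,2\}^2$ are the full relation, the diagonal, $C_3$, and $C_3^{-1}$.

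The gap is in step~(i). Your ``repair'' mechanism as written does not work: if $r,r'\in R$ agree except in coordinate $j$, then $\vee_3(r,r')$ has $j$-th entry $\vee_3(r_j,r'_j)\in\{r_j,r'_j\}$, so you never produce the third value this way, and ``cycling through the three ways of combining'' two tuples cannot escape the two-element set $\{r_j,r'_j\}$. The maximality/agreement argument you sketch therefore stalls. A workable replacement is to run the induction through the \emph{fibres}: for $a\in\{0,1,2\}$ set $R_a\coloneqq\{(r_2,\dots,r_n):(a,r_2,\dots,r_n)\in R\}$. Each $R_a$ is $\vee_3$-closed, and because every binary projection $\pr_{\{1,j\}}(R)$ is subdirect (so in $\{=,C_3,C_3^{-1},\text{full}\}$ by step~(ii)), each $R_a$ is itself subdirect in $\{0,1,2\}^{n-1}$; by the induction hypothesis each $R_a$ is $2$-decomposable. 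One then uses the ``absorption'' constraints $\vee_3(R_a\times R_{a'})\subseteq R_{\vee_3(a,a')}$ to show that the binary relations recorded by the fibres are forced to match those of $R$, which yields $2$-decomposability of $R$. This is where the real work lies, and your sketch does not yet supply it.
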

\begin{proof}[Proof of Theorem~\ref{thm:non-collapse}]
By Theorem~\ref{thm:pp-interpret}, 
the structure ${\mathfrak A}$ has a pp-interpretation $\Gamma$ of dimension $d$ in ${\mathfrak B}$, for some $d \geq 1$; choose $\Gamma$ such that $d$ is smallest possible. 
Let $S \coloneqq \Gamma^{-1}(C_3) \subseteq B^{2d}$. 
Let $I \subseteq \{1,\dots,2d\}$ be the set of all $i$
such that $\pr_i(S) = \{0,1,2\}$. Note that $S' \coloneqq \pr_I(S)$ is subdirect. 
By Lemma~\ref{lem:Zhuk} the relation $S'$ can be defined by a conjunction of atomic formulas over $(\{0,1,2\};C_3)$. We distinguish two cases.

\medskip 
\textbf{Case 1.} $S' = \{0,1,2\}^{\vert I\vert}$ (Note that this includes the case that $I = \emptyset$). We will prove that this case is impossible. Choose $t = (t_1,\dots,t_{2d}) \in S$ such that $\pr_I(t) = (0,\dots,0)$. We know that $\Gamma(t) \in C_3$.
Then $\Gamma(t) = (u,u +_3 1)$ for some $u \in \{0,1,2\}$ such that $\Gamma((t_1,\dots,t_d)) = u$ and $\Gamma((t_{d+1},\dots,t_{2d})) = u+_1 1$. 

Note that 
\begin{align*}
t' & \coloneqq (t_{d+1},\dots,t_{2d},s_1,\dots,s_d) \, \in \Gamma^{-1}((u+_3 1,u+_3 2)) \subseteq \, S  \\
\text{ and }
t'' & \coloneqq (s_1,\dots,s_d,t_1,\dots,t_d)  \, \in \Gamma^{-1}((u+_3 2,u)) \subseteq \, S.
\end{align*} 
Then for every $i \in \{1,\dots,2d\}$ we have 
$\vert \{\pr_i(t),\pr_i(t'),\pr_i(t'')\}\vert  < 3$:
\begin{itemize}
\item if $i \notin I$, then this holds by the definition of $I$;
\item if $i \in I$, then 
the choice of $t$ implies that $\pr_i(t) = t_i = 0$.
Also note that $i+d \in I$, too,
since $\pr_i(S) = \pr_i(\Gamma^{-1}(\{0,1,2\})) = \pr_{i+d}(S)$, as $C_3$ is a subdirect relation on $\{0,1,2\}$. Hence, 
$\pr_i(t') = t_{i+d} = 0$. 
\end{itemize}
Let $r \coloneqq \vee_3(\vee_3(t,t'),t'')$. 
We claim that $\Gamma(r)$ is of the form $(a,a)$ for some $a \in \{0,1,2\}$, which is a contradiction since $r \in S$ and therefore $\Gamma(r) \in C_3$. 
 To see this, note that the operation $\vee_3$ is associative and commutative when restricted to sets of size two, and hence 
\[\vee_3(\vee_3((t_1,\dots,t_d),(t_{d+1},\dots,t_{2d})),s) = \vee_3(\vee_3((t_{d+1},\dots,t_{2d}),s),(t_1,\dots,t_d)).\]
Here the left hand side is the projection of $r$ to its first $d$ coordinates, while
the right hand side is the projection of $r$ to its last $d$ coordinates. Therefore,
$\Gamma(r)$ has the form $(a, a)$ for some $a \in \{0, 1, 2\}$, as claimed.

\medskip 
\textbf{Case 2.} $S' \neq \{0,1,2\}^{\vert I\vert }$. Then  there are distinct $i,j \in I$ such that $\pr_{\{i,j\}}(S)$ is the relation $C_3$ or $=$ on $\{0,1,2\}$.
If $i,j \in \{1,\dots,d\}$ or if $i,j \in \{d+1,\dots,2d\}$, then we obtain a contradiction to the assumption that the interpretation $\Gamma$ has  smallest possible dimension.
If for example $1,2 \in I$ are such that $\pr_{\{1,2\}}(S)$ equals $=$, then $\Gamma'(a_1,\dots,a_{d-1}) := \Gamma(a_1,a_1,a_2,\dots,a_{d-1})$ is a $(d-1)$-dimensional interpretation of $\bA$ in $\bB$. If $\pr_{\{1,2\}}(S)$ equals $C_3$, then 
$\Gamma'(a_1,\dots,a_{d-1}) := \Gamma(a_1,a_1 +_3 1,a_2,\dots,a_{d-1})$ is a $(d-1)$-dimensional interpretation of $\bA$ in $\bB$.

First consider the case 
that $i \in \{1,\dots,d\}$ and that $j \in \{d+1,\dots,2d\}$;
the case that $j \in \{1,\dots,d\}$ and that $i \in \{d+1,\dots,2d\}$ can be treated similarly.
We claim that for every $a \in \{0,1,2\}$ we have $\vert \pr_i(\Gamma^{-1}(a))\vert = 1$. 
To see this, let $c = (c_1,\dots,c_d),c' = (c'_1,\dots,c'_d)$ be elements of $B^d$ be such that
$\Gamma(c) = \Gamma(c') = a$.
Choose $e \in B^d$ such that $\Gamma(e) = a+_3 1$.
Since $(a,a+_3 1) \in C_3$, the tuples $(c,e) = (c_1,\dots,c_d,e_1,\dots,e_d)$ and $(c',e) = (c'_1,\dots,c'_d,e_1,\dots, e_d)$ both belong to $\Gamma^{-1}(C_3) = S$. Hence, $(c_i,e_j),(c_i',e_j) \in \pr_{\{i,j\}}(S)$, which implies
that  $c_i=c'_i$.
We write $f(a)$ for the element of
$\pr_i \big (\Gamma^{-1}(a) \big)$; note that $f$ is a permutation of $\{0,1,2\}$.

Since $\Gamma$ is an interpretation of $\bA$ in $\bB$, we know that for every relation
$R$ of $\bA$ the relation $\Gamma^{-1}(R)$ over $B$ is pp-definable in $\bB$. The same holds for every relation $R$ that is pp-definable in $\bA$: if $\phi$ is the pp-definition of $R$ in $\bA$, then we may obtain a pp-definition of 
$\Gamma^{-1}(R)$ by replacing each atomic formula in $\phi$ by its defining formula in $\bB$; the resulting formula can be rewritten into a pp-formula over the signature of $\bB$ by moving the existential quantifiers to the front. In particular, 
\begin{center}
$(*)$ for every relation $R$ that is pp-definable in $\bA$ \\
the relation $f(R) =
\pr_i(\Gamma^{-1}(R))$  is pp-definable in $\bB$.
\end{center}
The relation $B_2$ is pp-definable in $\bA$, because we assumed that $\Pol(\bA) \subseteq {\bf B}_2$. 
Therefore, $f(B_2)$ is pp-definable in $\bB$, by $(*)$. Our assumption that ${\bf W} \subseteq \Pol(\bB)$
together with $\vee_3 \in {\bf W}$ implies that $\vee_3$  preserves every
relation is pp-definable in $\bB$. 
In particular, $\vee_3$ preserves $f(B_2)$. 
Since $\vee_3$ fails to preserve $f(B_2)$ 
if the permutation $f$ is a transposition, we conclude that $f$ is either a cyclic permutation or the identity permutation. In both cases, the graph of $f$ is pp-definable in $\bB$. 
It follows that 
for every relation $R$ of $\bA$, the relation $R = f^{-1}(f(R))$ is pp-definable in $\bB$, because $f(R)$ is pp-definable in $\bB$, by $(*)$. This proves that $\bA$ is pp-definable in $\bB$, and hence 
$\Pol(\bB) \subseteq \Pol(\bA)$.
\end{proof}

We have already mentioned that there are $2^\omega$ many clones of self-dual operations~\cite{Marchenkov83}. It is even known that the interval $[{\bf W},{\bf B}_2]$ has cardinality $2^\omega$ (\cite{Marchenkov83}; also see \cite{Zhuk15} and Figure~\ref{fig:Dima}).

\begin{corollary}\label{cor:cont}
There are $2^{\omega}$ many clones of self-dual operations with respect to homomorphic equivalence. 
\end{corollary}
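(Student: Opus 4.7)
The plan is to combine Theorem~\ref{thm:non-collapse} with Marchenkov's result that the interval $[\mathbf{W}, \mathbf{B}_2]$ contains $2^\omega$ clones. First I would note that every clone in $[\mathbf{W}, \mathbf{B}_2]$ consists of self-dual operations, since $\mathbf{B}_2 = \Pol(\{0,1,2\};C_3,B_2)$ is contained in $\mathbf{C}_3 = \Pol(\{0,1,2\};C_3)$, and hence so is every superclone of $\mathbf{B}_2$ contained in $\mathbf{C}_3$; in particular each element of $[\mathbf{W}, \mathbf{B}_2]$ is a clone of self-dual operations.

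Next I would argue that any two distinct clones $\mathbf{C} \neq \mathbf{D}$ in $[\mathbf{W}, \mathbf{B}_2]$ lie in distinct homomorphic equivalence classes. Since $\{0,1,2\}$ is finite, there exist relational structures $\mathfrak{A}$ and $\mathfrak{B}$ on $\{0,1,2\}$ with $\Pol(\mathfrak{A}) = \mathbf{C}$ and $\Pol(\mathfrak{B}) = \mathbf{D}$ (for example, taking as relations all invariants of the clone). By construction $\mathbf{W} \subseteq \Pol(\mathfrak{A}), \Pol(\mathfrak{B}) \subseteq \mathbf{B}_2$, so Theorem~\ref{thm:non-collapse} applies in both directions: if $\mathbf{C}$ and $\mathbf{D}$ were homomorphically equivalent, i.e., if there existed clone homomorphisms $\Pol(\mathfrak{B}) \to \Pol(\mathfrak{A})$ and $\Pol(\mathfrak{A}) \to \Pol(\mathfrak{B})$, the theorem would yield $\mathbf{D} \subseteq \mathbf{C}$ and $\mathbf{C} \subseteq \mathbf{D}$, contradicting $\mathbf{C} \neq \mathbf{D}$.

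This already produces at least $2^\omega$ homomorphic equivalence classes of clones of self-dual operations. For the matching upper bound, I would observe that there are only $2^\omega$ operations on $\{0,1,2\}$ altogether, hence at most $2^\omega$ clones and a fortiori at most $2^\omega$ homomorphic equivalence classes of clones of self-dual operations on $\{0,1,2\}$.

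There is no real obstacle here beyond assembling the pieces: both inputs (the cardinality of $[\mathbf{W}, \mathbf{B}_2]$ from Marchenkov, and the ``no collapse'' fact from Theorem~\ref{thm:non-collapse}) are already available, and the only thing to be careful about is passing from abstract clones to polymorphism clones of relational structures so that Theorem~\ref{thm:non-collapse} is literally applicable — which is immediate on a finite domain.
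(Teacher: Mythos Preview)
Your proposal is correct and follows exactly the approach the paper takes: combine Marchenkov's result that the interval $[\mathbf{W},\mathbf{B}_2]$ has $2^\omega$ elements with Theorem~\ref{thm:non-collapse} to deduce that distinct clones in that interval are not homomorphically equivalent. (One wording slip: elements of $[\mathbf{W},\mathbf{B}_2]$ are \emph{sub}clones of $\mathbf{B}_2$, not superclones; but since $\mathbf{B}_2 \subseteq \mathbf{C}_3$ your conclusion that they all consist of self-dual operations is correct.)
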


\section{The Collapse}
\label{sect:collapse} 
We now describe clones  of self-dual operations with respect to $\leq_{\operatorname{minor}}$. The lattice of such clones with respect to inclusion is drawn in Figure~\ref{fig:Dima}; the clones that appear in this picture will be explained in this section. 
If two clones in the picture have the same colour then this indicates that the two clones are minor equivalent. 

\subsection{The Idempotent Reduct}
\label{sect:constants}
A finite structure $\bA$ is called a \emph{core} if every endomorphism of $\bA$ is an automorphism of $\bA$. Every finite core structure $\bA$  pp-constructs the expansion of $\bA$ by all constant unary relations~\cite{wonderland}. 
Note that every expansion of $(\{0,1,2\};C_3)$ is a core. Therefore, when working with a structure $\bA$, we will often tacitly work instead with the expansion of $\bA$ by the 1-element relations, and allow constants 0, 1, and 2 in primitive positive formulas over such formulas. 
Phrased in terms of clones, this means that every clone of the form $\mathbf C=\Pol(\bA)$, where $\bA$ is a finite core, is minor equivalent to its \emph{idempotent reduct}, i.e., to the subclone of $\mathbf C$ consisting of all operations $f$ in $\mathbf C$ that satisfy $f(x,\dots,x) \approx x$.

Let us consider the following clones:
\begin{align*}
    {\mathbf C}^0_3 &\coloneqq\Pol(\{0,1,2\};C_3,\{0\}),
    \\{\mathbf L}_3 &\coloneqq\Pol(\{0,1,2\};C_3, L_3),
    \\ {\mathbf L}^0_3 &\coloneqq\Pol(\{0,1,2\};C_3,L_3,\{0\}),
\end{align*}
where $L_3$ is the following relation
\begin{align}
   L_3 \coloneqq \{ (x_1,x_2,x_3) \mid x_1 +_3 x_2 +_3 x_3 = 0\}.\label{def:RelationL3}
\end{align}
\begin{corollary}\label{cor:idempotentReduct}
The clones ${\mathbf C}_3$ and ${\mathbf C}^0_3$ are minor equivalent; the clones ${\mathbf L}_3$ and ${\mathbf L}^0_3$ are minor equivalent.
\end{corollary}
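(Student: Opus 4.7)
The plan is to simply instantiate the general principle articulated at the start of Section~\ref{sect:constants}: since every expansion of $(\{0,1,2\};C_3)$ is a core, both $(\{0,1,2\};C_3)$ and $(\{0,1,2\};C_3,L_3)$ are cores, and therefore each is minor equivalent to its expansion by all constant unary relations (via Theorem~\ref{thm:wonderland} applied to the stated pp-construction of such an expansion). Identifying these expansions with ${\mathbf C}^0_3$ and ${\mathbf L}^0_3$ yields the corollary.

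First I would observe that in the signature of $\bC_3 = (\{0,1,2\};C_3)$, the relations $\{1\}$ and $\{2\}$ are pp-definable from $\{0\}$ and $C_3$: for instance, $x = 1$ is equivalent to $\exists z\,(\{0\}(z) \wedge C_3(z,x))$, and $x = 2$ is pp-definable analogously (or by iterating). Consequently, the polymorphism clone of $(\{0,1,2\};C_3,\{0\})$ coincides with the polymorphism clone of its expansion by all three singleton unary relations, and the same holds for $(\{0,1,2\};C_3,L_3,\{0\})$.

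Next I would apply the fact recalled in Section~\ref{sect:constants}: a finite core pp-constructs its expansion by all constant unary relations. Since both $(\{0,1,2\};C_3)$ and $(\{0,1,2\};C_3,L_3)$ are expansions of $\bC_3$, both are cores. Hence $(\{0,1,2\};C_3)$ pp-constructs $(\{0,1,2\};C_3,\{0\})$, and $(\{0,1,2\};C_3,L_3)$ pp-constructs $(\{0,1,2\};C_3,L_3,\{0\})$. By Theorem~\ref{thm:wonderland} this yields minor-preserving maps ${\mathbf C}_3 \to {\mathbf C}^0_3$ and ${\mathbf L}_3 \to {\mathbf L}^0_3$. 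The reverse minor-preserving maps are given by the trivial inclusions ${\mathbf C}^0_3 \subseteq {\mathbf C}_3$ and ${\mathbf L}^0_3 \subseteq {\mathbf L}_3$, which hold because adding a relation to a structure can only shrink the polymorphism clone, and any inclusion of clones is automatically minor-preserving.

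There is no real obstacle: the corollary is a direct bookkeeping consequence of the core/idempotent-reduct equivalence stated in Section~\ref{sect:constants} together with the pp-definability of $\{1\},\{2\}$ from $\{0\}$ and $C_3$. The only point worth highlighting in the write-up is this last pp-definability, which explains why it suffices to include only the single unary relation $\{0\}$ in the definitions of ${\mathbf C}^0_3$ and ${\mathbf L}^0_3$.
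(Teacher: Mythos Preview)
Your proposal is correct and follows exactly the route the paper intends: the corollary is stated immediately after the general discussion in Section~\ref{sect:constants} and is meant as a direct instantiation of the fact that a finite core structure is minor equivalent (via Theorem~\ref{thm:wonderland}) to its expansion by all constant unary relations. Your additional remark that $\{1\}$ and $\{2\}$ are pp-definable from $\{0\}$ and $C_3$ is the only detail not spelled out verbatim in the paper, and it is precisely what justifies working with the single constant $\{0\}$ rather than all three.
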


\subsection{Duals}\label{sec:duals}

In the paper we consider dualities with respect to a cyclic permutation and to a transposition.
Formally, for a permutation $\pi$ of $\{0,1,2\}$
we define $f^{\pi}$ by
\[f^{(\pi)}(x_1,\dots,x_n) \coloneqq \pi(f(\pi^{-1}(x_1),\dots,\pi^{-1}(x_n))),\]
and say that $f^{(\pi)}$ is dual to $f$ with respect to $\pi$.
As it follows from the definition, if $\pi$ is a cyclic permutation, then 
$f = f^{\pi}$ if and only if $f\in \Pol(\{0,1,2\};C_3)$,
that is why we call such operations self-dual.

Sometimes we will need duality with respect to the transposition $\sigma \colon \{0,1,2\} \to \{0,1,2\}$ defined by $\sigma(0,1,2) = (1,0,2)$. 
In the following we write $f^*$ instead of $f^{(\sigma)}$ and 
denote ${\mathbf C}^*=\{f^*\mid f\in {\mathbf C}\}$. We call $f^*$ \emph{dual to $f$ with respect to the transposition}. For example,  
$\wedge_3$ is dual with respect to the transposition to 
the paper-scissor-stone operation $\vee_3$ whose composition table can be found below, 
and we write $(\wedge_3)^* = \vee_3$.

\begin{table}[h]
\begin{minipage}[t]{0.4\linewidth}
\centering
\begin{tabular}{l|lll}
 $\vee_3$ & \multicolumn{1}{l}{0} & \multicolumn{1}{l}{1} & \multicolumn{1}{l}{2} \\ \hline
 0   &     0    &        1     &  0\\
 1   &     1    &        1     &  2\\
 2   &     0    &        2     &  2\\
\end{tabular}
\end{minipage}
\hfill
\begin{minipage}[t]{0.5\linewidth}
\begin{tabular}{l|lll}
 $\wedge_3$ & \multicolumn{1}{l}{0} & \multicolumn{1}{l}{1} & \multicolumn{1}{l}{2} \\ \hline
 0   &     0    &        0     &  2\\
 1   &     0    &        1     &  1\\
 2   &     2    &        1     &  2\\
\end{tabular}
\end{minipage}
\end{table}

Note that $f$ preserves $R$ if and only if $f^*$ preserves 
\begin{equation}
   R^* \coloneqq \{ (\sigma(x_1),\dots,\sigma(x_n)) \mid (x_1,\dots,x_n) \in R \}.\label{eq:dual} 
\end{equation}
Also note that $g \colon {\mathbf C} \to {\mathbf C}^*$ given by $f \mapsto f^*$ is minor preserving;
this implies that ${\mathbf C}$ is minor equivalent to ${\mathbf C}^*$. 
The clones such that ${\mathbf C}^* = {\mathbf C}$ are drawn in the middle of the diagram. We refer to the middle part as the \emph{spine}; all clones that are not in the spine are either in the left wing or in the mirror symmetric right wing; the symmetry is given by taking the dual with respect to the transposition and clearly visible in Figure~\ref{fig:Dima}. 

To avoid confusion we either write ``self-dual'', meaning the duality with respect to a cyclic permutation, or ``dual with respect to the transposition''.

\subsection{Collapsing 
${\mathbf P}$ and $\mathbf Q$}
\label{sect:pq-collapse}
When discussing $\leq_{\operatorname{minor}}$, we start in the lower left and work to the top. 
Let us define the following relations: ${\mathfrak Q} \coloneqq (\{0,1,2\}; C_3,R^=_2)$
and ${\mathfrak P} \coloneqq (\{0,1,2\}; C_3,R^{\Rightarrow}_2)$, where  
\begin{align*}
R^=_2 & \coloneqq \big \{(x,y,z) \mid x \in \{0,1\} \wedge (x=0 \Rightarrow y=z \in \{0,1\} ) \big \} \\
\text{ and } R^{\Rightarrow}_2 & \coloneqq \big \{(x,y,z) \mid x,y \in \{0,1\} \wedge (x = y = 0 \Rightarrow z = 0) \big \}. 
\end{align*}
Zhuk proved that the interval between the clones 
${\mathbf Q} = \Pol({\mathfrak Q})$ and
${\mathbf P} \coloneqq \Pol({\mathfrak P})$ is a countably infinite chain of clones~\cite{Zhuk15}. Theorem~\ref{thm:collapse1} below implies that the entire chain from ${\mathbf Q}$ to ${\mathbf P}$ collapses in our poset: they are all minor equivalent (Corollary~\ref{cor:PQcollapse}). Note that for every $n \geq 2$ the relation $B_n \coloneqq \{0,1\}^n \setminus \{(0,\dots,0)\}$ has the following pp-definition in $\bP$.
\begin{align*}
B_n(x_1,\dots,x_n) \Leftrightarrow \exists u_1,\dots,u_{n-1} \, \big( & R^{\Rightarrow}_2(x_1,x_2,u_1)  \wedge C_3(u_{n-1},x_1) \\
& \wedge \bigwedge_{i \in \{2,\dots,n-1\}} R^{\Rightarrow}_2(u_{i-1},x_{i+1},u_i)
\big) 
\end{align*}
Let $\leq_2$ be the relation defined as $\{(0,0),(0,1),(1,1)\}$.
Note that $x \leq_2 y$ if and only if $R_2^{\Rightarrow}(y,y,x) \wedge R_2^{\Rightarrow}(x,x,x)$ and hence $\leq_2$ is pp-definable in $\bP$.

\begin{theorem}\label{thm:collapse1}
The structure $\bQ$ has a pp-construction in the structure $\bP$. 
\end{theorem}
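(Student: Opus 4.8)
The plan is to exhibit an explicit pp-power of $\bP$ that is homomorphically equivalent to $\bQ$; by Theorem~\ref{thm:wonderland} this is exactly a pp-construction. The natural attempt is a $1$-dimensional construction: try to pp-define the relations of $\bQ$ — namely $C_3$ and $R^=_2$ — directly in $\bP$. The relation $C_3$ is already present in $\bP$. The difficulty is $R^=_2$, whose defining condition is ``$x \in \{0,1\}$ and $(x=0 \Rightarrow y = z \in \{0,1\})$''. We have, from the material just before the statement, the order $\leq_2$ (pp-definable in $\bP$), the unary relation $\{0,1\}$ (obtainable as $\{x : x \leq_2 x\}$, or via $B_1$), and the relations $B_n$ for all $n \geq 2$. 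We should express the constraint ``$x = 0 \Rightarrow y = z$'' on the $\{0,1\}$-part using $R^{\Rightarrow}_2$: note $R^{\Rightarrow}_2(x,x,w)$ says $x \in \{0,1\}$ and $x = 0 \Rightarrow w = 0$, so $w$ is forced to $0$ when $x=0$ and is free in $\{0,1\}$ otherwise. Combining several such gadgets and the order $\leq_2$, together with constants (recall $\bP$ is a core, so by Section~\ref{sect:constants} we may use $0,1,2$), one pins down the graph of $R^=_2$.

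First I would carefully unwind $R^=_2$: a tuple $(x,y,z)$ is in $R^=_2$ iff $x \in \{0,1\}$, $y,z \in \{0,1\}$ whenever $x=0$ (and $y=z$), and when $x = 1$ there is no constraint on $y,z$ beyond membership in $\{0,1,2\}$. Equivalently, writing things additively: the condition on the $y$-coordinate is ``$x = 0 \Rightarrow y \in \{0,1\}$'', and likewise for $z$, plus ``$x = 0 \Rightarrow y = z$''. The membership constraint ``$x=0 \Rightarrow y \in \{0,1\}$'' can be written as $\exists u\, (R^{\Rightarrow}_2(x,x,u) \wedge \text{something linking } u \text{ and } y)$; the equality-when-$x=0$ constraint can be handled with a symmetric pair of $R^{\Rightarrow}_2$-atoms forcing $y \leq_2 z$ and $z \leq_2 y$ in the relevant branch. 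So I expect $R^=_2$ to have a pp-definition in $\bP$ of the schematic form $\exists \bar u\, \bigwedge (\text{$R^{\Rightarrow}_2$-atoms}) \wedge (\text{$C_3$-atoms}) \wedge (\text{$\leq_2$-atoms})$, and then $\bQ$ itself (i.e. $(\{0,1,2\};C_3,R^=_2)$) is pp-definable in $\bP$, which is even stronger than pp-constructible — it gives the construction with dimension $d=1$ and the identity homomorphism.

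Alternatively, if a direct $1$-dimensional pp-definition turns out to be awkward (for instance because $R^=_2$ genuinely mixes the ``$\{0,1\}$-world'' with the full ternary domain in a way that resists a quantifier-free-looking gadget), I would fall back on a $2$-dimensional pp-power: take domain $\{0,1,2\}^2$, identify a suitable subset via a pp-definable unary relation, and define the images of $C_3$ and $R^=_2$ there, then provide homomorphisms back and forth between this pp-power and $\bQ$. But I anticipate the $1$-dimensional route works, since $\bP$ is designed to be a ``strong'' structure (it pp-defines all the $B_n$ and $\leq_2$) and $\bQ$ is a weaker structure in Zhuk's chain.

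\textbf{Main obstacle.} The real work is the case analysis in verifying the candidate pp-definition of $R^=_2$: one must check that every tuple satisfying the formula lies in $R^=_2$ and, conversely, that every tuple of $R^=_2$ extends to a satisfying assignment of the existential variables. The subtlety is the asymmetry between the $x = 0$ branch (which forces $y = z \in \{0,1\}$) and the $x = 1$ branch (which imposes nothing), so the gadget must "switch off" its constraints exactly when $x = 1$; getting $R^{\Rightarrow}_2$-atoms to implement this conditional behaviour, and confirming that the auxiliary variables can always be chosen (in particular can take the value $2$ freely when needed), is where the care is required. Everything else — that $C_3$ is already in $\bP$, transitivity of pp-constructability, the reduction to the idempotent/core setting — is immediate from the results quoted above.
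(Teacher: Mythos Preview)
Your primary approach cannot work: the relation $R^{=}_2$ is \emph{not} pp-definable in $\bP$, so no $1$-dimensional pp-construction with the identity homomorphisms exists. The point is that the inclusion between the clones goes the other way than you assume. In Zhuk's chain one has ${\mathbf Q} \subsetneq {\mathbf P}$ (this is exactly how Corollary~\ref{cor:PQcollapse} obtains the easy direction ${\mathbf Q} \leq_{\operatorname{minor}} {\mathbf P}$). By the Geiger--Bodnar\v{c}uk--Kalu\v{z}nin--Kotov--Romov correspondence, ${\mathbf Q} \subsetneq {\mathbf P}$ means that every relation pp-definable in $\bP$ is pp-definable in $\bQ$, but not conversely; in particular $R^{=}_2$ is preserved by $\mathbf Q$ yet fails to be preserved by some operation in $\mathbf P$, so it cannot be pp-defined from $C_3$ and $R^{\Rightarrow}_2$. (Adding constants does not help: every $f \in \mathbf P$ is already idempotent, since $f(x,\dots,x)=x+_3 c$ for some $c$ by self-duality, and applying $f$ to the constant tuples $(0,0,0),(1,1,1)\in R^{\Rightarrow}_2$ forces $c\in\{0,1\}\cap\{2,0\}=\{0\}$.) Your remark that ``$\bQ$ is a weaker structure'' is therefore backwards: relationally $\bQ$ is the stronger one, and that is precisely why a genuine pp-\emph{construction}, with nontrivial dimension and homomorphic equivalence, is needed rather than a mere pp-definition.

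Your fallback to a higher-dimensional pp-power is the right instinct, but the sketch you give (``identify a suitable subset \dots'') contains no actual mechanism for encoding the conditional $x=0 \Rightarrow y=z$ using only $R^{\Rightarrow}_2$, $\leq_2$, $B_n$. The paper carries this out with a \emph{fourth} pp-power $\bP'$ of $\bP$: it encodes an element $a\in\{0,1,2\}$ by a quadruple $g(a)\in\{0,1,2\}^4$ whose last three coordinates form a one-hot style indicator in $\{0,1\}^3$, then pp-defines $(R^{=}_2)^{\bP'}$ using $B_3$, $\leq_2$, and a symmetric pair of $R^{\Rightarrow}_2$-atoms to force agreement of the relevant indicator bits when $x=0$. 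The substantive work is checking that the decoding map $h\colon\{0,1,2\}^4\to\{0,1,2\}$ is a homomorphism $\bP'\to\bQ$. None of this is visible in your proposal; the ``switching off when $x=1$'' idea you describe is on the right track, but it has to be implemented on the indicator coordinates of a higher-dimensional power, not on the base domain.
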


\begin{proof}
We first define a fourth pp-power $\bP'$ 
of $\bP$ with domain $P' \coloneqq \{0,1,2\}^4$, and then show that there exists a homomorphism $h \colon \bP' \to \bQ$
and a homomorphism $g \colon \bQ \to \bP'$. Our intuition for defining $\bP'$ will be guided by the choice of $g$: 
\begin{align}
g(0) & \coloneqq (0,1,0,0) \nonumber \\ 
g(1) & \coloneqq (1,0,1,0) \nonumber \\
g(2) & \coloneqq (2,0,0,1). \nonumber
\end{align} 

The following relations are primitively positively definable 
over $\bP$: 
\begin{align*}
C_3^{\bP'} & \coloneqq \{(x,y) \in (P')^2 \mid C_3(x_0,y_0) \wedge x_1 = y_2 \wedge x_2 = y_3 \wedge x_3 = y_1 \},   \\
(R^=_2)^{\bP'} & \coloneqq \big \{(x,y,z) \in (P')^3 \mid 
B_3(x_1,x_2,x_3) \wedge B_3(y_1,y_2,y_3) \wedge B_3(z_1,z_2,z_3) \\
& \quad \quad \quad \quad \quad \quad \quad \quad \quad \quad \wedge  x_3 = 0 \wedge x_0 = x_2 \wedge y_3 \leq_2 x_0 \wedge z_3 \leq_2 x_0 \\
&  \quad \quad \quad \quad \quad \quad \quad \quad \quad \quad \wedge 
R^{\Rightarrow}_2(x_0,y_2,z_2) \wedge
R^{\Rightarrow}_2(x_0,z_2,y_2)\big \}. 
\end{align*}

\medskip \noindent
\textbf{Claim.} $g$ is a homomorphism from $\bQ$ to $\bP'$. 
\begin{itemize}
\item Let $(a,b) \in C_3$. Then 
$(g(a)_0,g(b)_0) = (a,b) \in C_3$. Moreover, 
$g(a)_1 = g(b)_2$, $g(a)_2 = g(b)_3$, 
and $g(a)_3 = g(b)_1$. Hence $(g(a),g(b)) \in C_3^{\bP'}$. 
\item Let $(a,b,c) \in R^=_2$. 
The definition of $g$ implies that the first three conjuncts of the definition of
$(R^=_2)^{\bP'}$ are satisfied by $g(a),g(b),g(c)$. 
Moreover, $a \in \{0,1\}$ and hence $g(a)_3 = 0$. 

Suppose that $a=0$. 
We have either $b=c=0$ or $b=c=1$ by the definition of $R^=_2$. 
Then $g(a)_0 = g(a)_2 = 0$
and $0 = g(b)_3 = g(c)_3 \leq_2 g(a)_0 = 0$. 
Moreover, the last two conjuncts
in the definition of $(R^=_2)^{\bP'}$ hold:  
if $b=c=0$ then the conclusion 
in the implication $x = y = 0 \Rightarrow z=0$ from the definition of $R^{\Rightarrow}_2$ is satisfied in each of the two conjuncts, and if $b=c=1$ then the premise in the implication $x = y = 0 \Rightarrow z=0$ is not satisfied in each of the two conjuncts;
moreover, for each conjunct the first two arguments of $R^{\Rightarrow}_2$ are from $\{0,1\}$. 

Finally, suppose that $a=1$. In this case $b$ and $c$ may take any value in $\{0,1,2\}$. Note that $g(a)_0 = g(a)_2 = 1$ and $g(b)_3,g(c)_3 \leq_2 1$.
Since $g(a)_0=1$ the last two conjuncts
in the definition of $(R^\Rightarrow_2)^{\bP'}$ hold again, because the premise in the implication of the definition of $R^=_2$ is not fulfilled and because the first argument $x_0$ of these conjuncts equals $1$. This shows that $(g(a),g(b),g(c)) \in (R^=_2)^{\bP'}$. 
\end{itemize}
Define $h \colon P' \to \{0,1,2\}$ as follows. 
\begin{align*}
h(x_0,x_1,x_2,x_3) \coloneqq \begin{cases}
0 & \text{if }  (x_1,x_2,x_3) \in \{(1,0,0),(1,0,1)\} \\
1 & \text{if }  (x_1,x_2,x_3) \in \{(0,1,0),(1,1,0)\} \\
2 & \text{if }  (x_1,x_2,x_3) \in \{(0,0,1),(0,1,1)\} \\
x_0 & \text{otherwise} 
\end{cases}
\end{align*}

\medskip  \noindent
\textbf{Claim.} $h$ is a homomorphism from $\bP'$ to $\bQ$. 
\begin{itemize}
\item Let $(a,b) \in C_3^{\bP'}$. From the definition of $C_3^{\bP'}$ it follows that for a fixed $a$
there is a unique $b$ such that $(a,b)\in C_3^{\bP'}$. It is easy to check that if $a$ is such that $(a_1,a_2,a_3)\in \NAE\coloneqq\{0,1\}^3\setminus\{(0,0,0),(1,1,1)\}$, then $b\in \NAE$ and $h$ is defined such that the tuple $(h(a),h(b))\in C_3$. Otherwise, we have $(h(a),h(b)) = (a_0,b_0)$ and the first conjunct in the definition of $C_3^{\bP'}$ implies that $(a_0,b_0)\in C_3$.
 
\item Let $(a,b,c) \in (R^=_2)^{\bP'}$.
Then $a_3 = 0$ and 
$B_3(a_1,a_2,a_3)$ implies that $a_1 = 1$ or $a_2 = 1$. Moreover, $a_0 = a_2$ implies that $a\in\{(0,1,0,0),(1,1,1,0),(1,0,1,0)\}$ and 
thus $h(a) \in \{0,1\}$. 

If $h(a) = 1$ then $R^=_2(h(a),h(b),h(c))$ holds trivially.  
If $h(a) = 0$ then $(a_0,a_1,a_2,a_3)=(0,1,0,0)$ 
by the definition of $h$ and the observations above. Hence, $b_3 = c_3 = 0$ since $b_3,c_3 \leq_2 a_0 = 0$. This implies that $h(b),h(c) \in \{0,1\}$. 
From $R^{\Rightarrow}_2(0,b_2,c_2)$ and $R^{\Rightarrow}_2(0,c_2,b_2)$ it follows
that $u\coloneqq b_2=c_2$. Now we distinguish two cases: if $u=0$, we have $B_3(b_1,0,0)$ and $B_3(c_1,0,0)$, thus $b_1=c_1=1$. Therefore, $h(b) = h(c) = 0$. If $u=1$, then it follows from the definition of $h$ that $h(b_0,b_1,1,0)=h(c_0,c_1,1,0)=1$. This concludes the proof.\qedhere
\end{itemize} 
\end{proof}

\begin{corollary}\label{cor:PQcollapse}
The clones ${\mathbf P}$ and ${\mathbf Q}$ are minor equivalent. 
\end{corollary}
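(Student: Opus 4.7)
The plan is to derive Corollary~\ref{cor:PQcollapse} as a direct consequence of Theorem~\ref{thm:collapse1} combined with a clone-lattice inclusion coming from Zhuk's description cited just before Theorem~\ref{thm:collapse1}.

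For the direction $\mathbf{P} \leq_{\operatorname{minor}} \mathbf{Q}$, I would feed Theorem~\ref{thm:collapse1} into Theorem~\ref{thm:wonderland}: the pp-construction of $\mathfrak{Q}$ in $\mathfrak{P}$ supplied by the theorem immediately yields a minor-preserving map $\Pol(\mathfrak{P}) \to \Pol(\mathfrak{Q})$, i.e., $\mathbf{P} \leq_{\operatorname{minor}} \mathbf{Q}$.

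For the reverse direction $\mathbf{Q} \leq_{\operatorname{minor}} \mathbf{P}$, I would invoke the description of the interval $[\mathbf{Q}, \mathbf{P}]$ in the (inclusion) clone lattice recalled immediately before the theorem, which in particular gives $\mathbf{Q} \subseteq \mathbf{P}$. The set-theoretic inclusion $\mathbf{Q} \hookrightarrow \mathbf{P}$ preserves arities, projections, and composition, so it is a clone homomorphism; and every clone homomorphism is, by definition, a minor-preserving map. This gives $\mathbf{Q} \leq_{\operatorname{minor}} \mathbf{P}$, and combining both directions produces the claimed minor equivalence.

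The only genuine obstacle is the one already overcome in Theorem~\ref{thm:collapse1}; the other direction is a formality from Zhuk's classification. Should one prefer to avoid the black box, a fully self-contained proof can be obtained by replacing the appeal to Zhuk by an explicit primitive positive definition of $R_2^{\Rightarrow}$ in the structure $(\mathfrak{Q}, \{0\})$, which establishes $\mathbf{Q} \subseteq \mathbf{P}$ directly and requires only a short case analysis.
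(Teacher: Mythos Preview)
Your proposal is correct and follows essentially the same route as the paper: one direction comes from Theorem~\ref{thm:collapse1} combined with Theorem~\ref{thm:wonderland}, and the other from the inclusion ${\mathbf Q}\subseteq{\mathbf P}$ (which the paper also just states, relying on Zhuk's description of the interval). Your added remark about pp-defining $R_2^{\Rightarrow}$ over $\mathfrak{Q}$ to make the inclusion self-contained is a nice optional supplement, but not needed here.
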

\begin{proof} 
It is an immediate consequence of Theorem~\ref{thm:collapse1} and Theorem~\ref{thm:wonderland} that ${\mathbf P}  \leq_{\operatorname{minor}} {\mathbf Q}$. 
Conversely, 
${\mathbf Q}  \leq_{\operatorname{minor}} {\mathbf P}$ follows from the fact that ${\bf Q} \subseteq {\bf P}$.
\end{proof}

\subsection{Collapsing $\bBP$ and ${\mathbf M}_{\infty}$}
\label{sect:maincollapse}
Below we define the clones {${\mathbf B}_{n} \pi_\infty$}, for every $n \in \{3,\dots,\infty\}$, and ${\mathbf M}_{n}$, for every $n \in \{2,3,4,\dots,\infty\}$.  
There are $2^\omega$ many clones between $\bBP$ and ${\mathbf M}_{\infty}$. 
In this section we prove that the clones $\bBP$ and ${\mathbf M}_{\infty}$  (and therefore all the $2^\omega$ many clones between them) are minor equivalent. We obtain this result as a direct consequence of Theorem~\ref{thm:collapse2}.
The proof is similar to the proof of the minor equivalence of ${\mathbf Q}$ and ${\mathbf P}$ presented in Section~\ref{sect:pq-collapse}.

\begin{definition}
Define
\begin{align*} 
\bM_{\infty} & \coloneqq (\{0,1,2\}; C_3,\leq_2,B_2,B_3,\dots) \\
\bB_{\infty} & \coloneqq (\{0,1,2\}; C_3,B_2,B_3,\dots)
\end{align*}
For $n \in \{2,3,\dots,\infty\}$, let $\bM_n$ be the reduct of $\bM_\infty$ that contains all relations of $\bM_\infty$  of arity at most $n$. The structure $\bB_n$ is defined analogously from $\bB_\infty$. 
Define 
${\mathbf M}_n \coloneqq \Pol(\bM_n)$ and ${\mathbf B}_{n} \coloneqq \Pol(\bB_n)$.
\end{definition}

Note that this definition is compatible with the definition of ${\bf B}_2$ that was already defined earlier.
To define ${\mathbf B}_n \pi_{\infty}$ we need to introduce new relations on $\{0,1,2\}$. 
Let \[W \coloneqq \left ( \begin{matrix} 0 & 0 & 1 & 1 & 1 \\
0 & 1 & 0 & 1 & 2 \end{matrix} \right ) \subseteq \{0,1,2\}^2.\] 
Note that $W(x,y)$ holds if $x \in \{0,1\}$ and 
$x = 0$ implies $y \in \{0,1\}$. 

For $m,k \in {\mathbb N}$ and $S_1 \cup \cdots \cup S_m = \{1,\dots,k\}$, the relation  
$R_{S_1,\dots,S_m}$ consists of all tuples
$(x_1,\dots,x_m,y_1,\dots,y_k) \in \{0,1,2\}^{m+k}$
such that 
\begin{enumerate}
\item $x_1,\dots,x_m \in \{0,1\}$, 
\item for every $i \in \{1,\dots,m\}$, if $x_i = 0$ then 
$y_j \in \{0,1\}$ for every $j \in S_i$, and
\item not $x_1 = \cdots = x_m = y_1 = \cdots = y_k = 0$.
\end{enumerate}

\begin{definition}
Let $\bBB$ be the structure on $\{0,1,2\}$
with the relations $C_3$, $W$, and the relation $R_{S_1,\dots,S_m}$
for every $m,k \in {\mathbb N}$ and all $S_1,\dots,S_m \subseteq \{1,\dots,k\}$ such that $S_1 \cup \cdots \cup S_m = \{1,\dots,k\}$. 
For $n \in \{3,\dots,\infty\}$, let $\bBBn$ 
be the reduct of $\bBB$ 
that contains all relations of $\bBB$ of arity at most $n$. 
As usual, we define $\bBBBn \coloneqq \Pol(\bBBn)$.
\end{definition}

It is known that $\bBBBn  \subseteq {\mathbf M}_n$, for every $n \in \{3,4,\dots,\infty\}$; in particular ${\mathbf M}_n$ contains the generator operation of $\bBBBn$ (see \cite{Zhuk15}, Theorem~29 and Theorem~30). 

\begin{theorem}\label{thm:collapse2}
For every $n \in \{3,4,\dots,\infty\}$, the structure 
$\bBBn$ has a pp-construction in the structure $\bM_n$. \end{theorem}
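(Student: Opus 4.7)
The plan is to mirror the blueprint of Theorem~\ref{thm:collapse1}. I would build a fourth pp-power $\bM'_n$ of $\bM_n$ on the set $\{0,1,2\}^4$ and exhibit homomorphisms $g \colon \bBBn \to \bM'_n$ and $h \colon \bM'_n \to \bBBn$, which by Theorem~\ref{thm:wonderland} witnesses that $\bBBn$ has a pp-construction in $\bM_n$. I would reuse the same maps $g$ and $h$ as in Theorem~\ref{thm:collapse1}: $g(0) \coloneqq (0,1,0,0)$, $g(1) \coloneqq (1,0,1,0)$, $g(2) \coloneqq (2,0,0,1)$ (the first coordinate records the element itself, and the last three form an indicator vector), with $h$ the case-based map on $\{0,1,2\}^4$ defined there.

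For each relation of $\bBBn$ I would specify a pp-formula over $\bM_n$ defining its $\bM'_n$-counterpart. The relation $C_3$ is handled by the exact pp-formula used in Theorem~\ref{thm:collapse1}. For the relation $W$ I would take
\[
W^{\bM'_n}(x,y) \coloneqq B_3(x_1,x_2,x_3) \wedge B_3(y_1,y_2,y_3) \wedge x_3 = 0 \wedge x_0 = x_2 \wedge y_3 \leq_2 x_0,
\]
where the first four conjuncts encode ``$x$ is a valid encoding of an element of $\{0,1\}$'' and the last conjunct encodes ``$x=0 \Rightarrow y \in \{0,1\}$''. For each relation $R_{S_1,\dots,S_m}$ of arity $m+k \leq n$, I would define $R_{S_1,\dots,S_m}^{\bM'_n}$ as the conjunction of: (i) $B_3$-validity on the indicator coordinates of each $x_i$ and each $y_j$; (ii) $x_{i,3} = 0$ and $x_{i,0} = x_{i,2}$ for each $i$; (iii) the guards $y_{j,3} \leq_2 x_{i,0}$ for each pair $(i,j)$ with $j \in S_i$; and (iv) the single atom $B_{m+k}(x_{1,0}, \dots, x_{m,0}, y_{1,2}, \dots, y_{k,2})$. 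Every atom is a relation of $\bM_n$; in particular the arity $m+k$ of the last $B$-atom respects the bound $n$. With these definitions in place, verifying that $g$ and $h$ are homomorphisms reduces to a routine case analysis on the indicator triples $(x_{i,1}, x_{i,2}, x_{i,3})$ and $(y_{j,1}, y_{j,2}, y_{j,3})$, entirely parallel to the two Claims in the proof of Theorem~\ref{thm:collapse1}.

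The main obstacle is the formulation of the ``not all zero'' constraint~(iv). A naive translation would want an atom saying ``some of $x_{i,0}, y_{j,2}, y_{j,3}$ is nonzero'', which would be a $B_{m+2k}$-atom whose arity can exceed $n$. The definition above uses only $m+k$ variables; this works because the assumption $S_1 \cup \dots \cup S_m = \{1, \dots, k\}$ ensures every $j$ lies in some $S_i$, so the guards in~(iii) force $y_{j,3} = 0$ for every $j$ whenever every $x_{i,0} = 0$. In this regime each $y_j$ is necessarily a $\{0,1\}$-encoding, and $h(y_j) = 0$ holds if and only if $y_{j,2} = 0$. The coordinate $y_{j,2}$ thus faithfully records ``$y_j$ is nonzero in $\bBBn$'' precisely in the cases where the distinction matters, so the single $B_{m+k}$-atom is enough.
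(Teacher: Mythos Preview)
Your proposal is correct and matches the paper's own proof almost verbatim: the same fourth pp-power, the same maps $g$ and $h$, the same pp-definition of $W^{\bM'_n}$, and the same clause structure for $R_{S_1,\dots,S_m}^{\bM'_n}$. The only cosmetic difference is that the paper places $x^i_2$ rather than $x_{i,0}$ into the $B_{m+k}$ atom, which is immaterial since $x_{i,0} = x_{i,2}$ is already a conjunct; your discussion of why the single $B_{m+k}$ atom suffices (via the covering hypothesis $S_1 \cup \cdots \cup S_m = \{1,\dots,k\}$) is exactly the point that makes the arity bound go through.
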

\begin{proof}
As in the proof of Theorem~\ref{thm:collapse1}, we use a fourth pp-power 
$\bM'_n$ of $\bM_n$, this time with the 
signature of $\bBBn$. 
The relation $C_3^{\bM'_n}$ is defined as in the proof of Theorem~\ref{thm:collapse1}. 
 
Let $k \in {\mathbb N}$ and $S_1,\dots,S_m \subseteq \{1,\dots,k\}$ be such that $S_1 \cup \dots \cup S_m = \{1,\dots,k\}$ and $m+k \leq n$. 
Then the following relations are primitively positively definable 
over $\bM_n$. 
\begin{align*}
W^{\bM'_n} & \coloneqq \{(x,y) \in (\{0,1,2\}^4)^2 \mid B_3(x_1,x_2,x_3) \wedge B_3(y_1,y_2,y_3)   \\
& \quad \quad \wedge x_3 = 0 \wedge x_0 = x_2  \wedge y_3 \leq_2 x_0 \}   \\
R_{S_1,\dots,S_m}^{\bM'_n} & \coloneqq \big \{(x^1,\dots,x^m,y^1,\dots,y^k) 
\mid  B_{m+k}(x^1_2,\dots,x_2^m,y^1_2,\dots,y^k_2) 
\\
& \wedge  \bigwedge_{i \in \{1,\dots,m\}} 
\big (B_3(x^i_1,x^i_2,x^i_3) \wedge x_3^i = 0 \wedge x^i_0 = x^i_2   
\\ & \quad \quad \quad \quad \quad \quad 
\wedge \bigwedge_{j \in S_i}  (y^j_3 \leq_2 x_0^i) \wedge B_3(y^j_1,y^j_2,y^{j}_3) \big) \big \} 
\end{align*}

Let $g \colon \{0,1,2\} \to \{0,1,2\}^4$ and $h \colon \{0,1,2\}^4 \to \{0,1,2\}$ be defined as in the proof 
of Theorem~\ref{thm:collapse1}

\medskip \noindent
\textbf{Claim.} $g$ is a homomorphism from $\bBBn$ to $\bM_n'$. We have already verified in the proof of Theorem~\ref{thm:collapse1} that $g$ preserves $C_3$. 
\begin{itemize}
\item Let $(a,b) \in W$. By the definition of $g$ we have 
\[B_3(g(a)_1,g(a)_2,g(a)_3) \mbox{ and } B_3(g(b)_1,g(b)_2,g(b)_3).\] 
If $a \neq 0$, then $a=1$ and $g(b)_3 \leq_2 g(a)_0=g(a)_2 = 1$
since $g(b)_3 \in \{0,1\}$ by the definition of $g$. 
If $a = 0$ then $b \in \{0,1\}$. Also, $g(a)_3 = 0$
and $g(b)_3 = 0 \leq_2 g(a)_0 = g(a)_2 = 0$.
\item Let 
$(a^1,\dots,a^m,b^1,\dots,b^{k}) \in R_{S_1,\dots,S_m}$. 
Then there exists $i \in \{1,\dots,m\}$ such 
that $a^i = 1$ or $j \in \{1,\dots,k\}$ such that
$b^j = 1$. 
If $a^i = 1$ then 
$g(a^i) = (1,0,1,0)$. 
If $b^j=1$ then $g(b^j) = (1,0,1,0)$. In both cases we have \[B_{m+k}(g(a^1)_2,\dots,g(a^m)_2,g(b^1)_2,\dots,g(b^k)_2).\]
To verify the other conjuncts in the definition of $R^{\bM_n'}_{S_1,\dots,S_m}$, let $i \in \{1,\dots,m\}$. 
Clearly, $B_3(g(a^i)_1,g(a^i)_2,g(a^i)_3)$. 
Since $a^i \in \{0,1\}$ we have $g(a^i)_3 = 0$ and $g(a^i)_0 = g(a^i)_2$.  Let $j \in A_i$. Clearly, $B_3(g(b^j)_1,g(b^j)_2,g(b^j)_3)$. 
If $a^i = 1$ then $g(b^j)_3 \leq_2 g(a^i)_0 = 1$ because $g(b^j)_3 \in \{0,1\}$. 
If $a^i = 0$, then $g(a^i) = (0,1,0,0)$.
We have $g(b^j)_3 = 0 \leq_2 0 = g(a^i)_0$. It follows that 
$(g(a^1),\dots,g(a^m),g(b^1),\dots,g(b^k)) \in R^{\bM_n'}_{S_1,\dots,S_m}$.
\end{itemize}

\textbf{Claim.} $h$ is a homomorphism from $\bM_n'$ to $\bBBn$.  We have already verified in the proof of Theorem~\ref{thm:collapse1} that $h$ preserves $C_3$. 
\begin{itemize}
\item Let $(a,b) \in W^{\bM_n'}$.
Since $a_3=0$ and $B_3(a_1,a_2,a_3)$ we have that
$a_0 = a_2 \in \{0,1\}$. 
If $a_0 = 1$ then 
$h(a) = 1$, and $(h(a),h(b)) \in W$. 
If $a_0=0$, then $b_3 = 0$ since $b_3 \leq_2 a_0$. 
Then from $B_3(b_1,b_2,b_3)$ it follows that $(b_1,b_2,b_3) \in \{(1,0,0),(0,1,0),(1,1,0)\}$ and therefore $h(b) \in \{0,1\}$ and $(h(a),h(b)) \in W$. 
\item Let  $(a^1,\dots,a^m,b^1,\dots,b^k) \in R_{S_1,\dots,S_m}^{\bM_n'}$. 
We have to show that \[(h(a^1),\dots,h(a^m),h(b^1),\dots,h(b^k))\]
satisfies (1), (2), and (3) in the definition of $R_{S_1,\dots,S_m}$. 
For every $i \in \{1,\dots,m\}$ we have $B_3(a^i_1,a^i_2,a^i_3)$ and $a_3^i = 0$ and so
$(a^i_1,a^i_2,a^i_3) \in \{(1,0,0),(0,1,0),(1,1,0)\}$ and thus
$h(a^i) \in \{0,1\}$, showing (1).  

Let $j \in S_i$. 
If $h(a^i) = 0$, then $b_3^j \leq_2 a^i_0 = a^i_2 = 0$ implies
that $b^j_0 \in \{0,1\}$. Since $B_3(b^j_1,b^j_2,b^j_3)$ 
we have $h(b^j) \in \{0,1\}$, showing (2). 

To prove (3), suppose for contradiction that
\[h(a^1)=\cdots=h(a^m) = h(b^1) = \dots = h(b^k) = 0.\] 
For every $i \in \{1,\dots,m\}$ we have $a^i_3 = 0$,  
$B_3(a^i_1,a^i_2,a^i_3)$, and $a^i_0 = a^i_2$, and hence $a^i = (0,1,0,0)$. Since $b^j_3 \leq_2 a^i_0$ we obtain $b^j_3 = 0$ for all $j \in \{1,\dots,k\}$. 
Since we assumed $h(b^1) = \dots = h(b^k) = 0$, it follows that $b^j\in\{(0,0,0,0),(0,1,0,0)\}$ for every $j\in\{1,\dots,k\}$; in both cases we obtain a contradiction since $B_{m+k}(a^1_2,\dots,a^m_2,b^1_2,\dots,b^k_2)$ must hold. 
This shows (3), and thus $(a^1,\dots,a^n,b^1,\dots,b^m) \in R_{S_1,\dots,S_m}$.\qedhere
\end{itemize} 
\end{proof}

\begin{corollary}\label{cor:collapse}
The minor equivalence relation has only countably many equivalence classes of clones of self-dual operations.
\end{corollary}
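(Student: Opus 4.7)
The plan is to combine the explicit description of the inclusion lattice of clones of self-dual operations from \cite{Zhuk15} (as depicted in Figure~\ref{fig:Dima}) with the collapse result of Theorem~\ref{thm:collapse2} and the duality observations of Section~\ref{sec:duals}.

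First, I would recall from Zhuk's classification that the only intervals in this lattice containing uncountably many clones are $[\bBP, \mathbf{M}_\infty]$ and its image $[(\bBP)^*, (\mathbf{M}_\infty)^*]$ under the transposition-dual involution $\mathbf{C} \mapsto \mathbf{C}^*$. Every other piece of the lattice contains only countably many clones; in particular, the chain between $\mathbf{Q}$ and $\mathbf{P}$ is already known to be countably infinite by the remark preceding Theorem~\ref{thm:collapse1}. Hence it suffices to show that each of the two uncountable intervals consists of a single minor equivalence class.

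Second, to collapse $[\bBP, \mathbf{M}_\infty]$, I would apply Theorem~\ref{thm:collapse2} with $n = \infty$ and Theorem~\ref{thm:wonderland} to obtain $\mathbf{M}_\infty \leq_{\operatorname{minor}} \bBBB$. The reverse inequality $\bBBB \leq_{\operatorname{minor}} \mathbf{M}_\infty$ is free from the inclusion $\bBBB \subseteq \mathbf{M}_\infty$ recalled just before Theorem~\ref{thm:collapse2}, since the identity on operations is minor-preserving. For any intermediate clone $\mathbf{C}$ with $\bBBB \subseteq \mathbf{C} \subseteq \mathbf{M}_\infty$, transitivity of $\leq_{\operatorname{minor}}$ sandwiches $\mathbf{C}$ between two minor-equivalent clones, forcing $\mathbf{C}$ to be minor equivalent to both endpoints. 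The mirror interval $[(\bBP)^*, (\mathbf{M}_\infty)^*]$ collapses by the same argument applied to duals, using that $\mathbf{C} \mapsto \mathbf{C}^*$ is a bijection of the lattice that preserves minor equivalence (Section~\ref{sec:duals}).

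Combining these observations, the total number of minor equivalence classes of clones of self-dual operations is bounded by the countably many equivalence classes of clones lying outside the two intervals, plus one class for each interval, and is therefore countable. The main obstacle is not the collapse argument itself, which is short given Theorem~\ref{thm:collapse2}, but rather the appeal to Zhuk's lattice: a careful writeup should point at the specific results in \cite{Zhuk15} (and the relevant portions of Figure~\ref{fig:Dima}) that witness the concentration of uncountability in $[\bBP, \mathbf{M}_\infty]$ and its dual, ruling out any other uncountable family of clones that could survive the collapse.
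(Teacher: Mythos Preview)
Your proposal is correct and matches the paper's approach exactly: the paper states the corollary immediately after Theorem~\ref{thm:collapse2} without an explicit proof, relying on precisely the reasoning you outline---Zhuk's classification localises the uncountability to the interval $[\bBP,\mathbf{M}_\infty]$ (and its transposition-dual), Theorem~\ref{thm:collapse2} together with the inclusion $\bBBB\subseteq\mathbf{M}_\infty$ collapses that interval to a single minor-equivalence class, and the dual interval is handled by the observation in Section~\ref{sec:duals} that $\mathbf{C}\mapsto\mathbf{C}^*$ preserves minor equivalence. Your closing remark about needing to cite the precise results from~\cite{Zhuk15} that concentrate the uncountability is well taken and is indeed the only part left implicit in the paper.
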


\subsection{A Double Collapse}
In this section we study the minor equivalence class of ${\mathbf M}_2 = \Pol(\bM_2)$. 
Let $N$ be the relation 
\begin{align}
N \coloneqq \left ( \begin{matrix} 0 & 1 & 1 & 1 \\
0 & 0  & 1 & 2 \end{matrix} \right ) \subseteq \{0,1,2\}^2. \label{eq:N}
\end{align}
Note that $N(x,y)$ holds if $x \in \{0,1\}$ and 
$x = 0$ implies $y = 0$. 
Define
\begin{align*}
\bN_2 & \coloneqq (\{0,1,2\};C_3,N,B_2) \\
\text{ and } \quad \bM & \coloneqq (\{0,1,2\};C_3,\leq_2). 
\end{align*}
We will show that there is a pp-construction of 
$\bN_2$ in $\bM$. 
Note that the formula $N(y,x) \wedge N(x,x)$ is equivalent to $x \leq_2 y$, so 
$\bM$ also has a pp-construction in $\bN_2$ (even a pp-definition).
It will follow that ${\mathbf M} \coloneqq \Pol(\bM)$ and ${\mathbf N}_2 \coloneqq \Pol(\bN_2)$ are minor equivalent. 
Note that $\bM^* = {\bM}$, so $\bM$ is already part of the spine.

Instead of directly specifying the pp-construction of $\bM$ in $\bN_2$, 
we found it more convenient to prove this in two steps: first we show that
$\bN_2$ has a pp-construction in $\bM_2$, and then we show that $\bM_2$ has a pp-construction in $\bM$. 

\begin{lemma}
 There is a pp-construction of $\bN_2$
in $\bM_2$. 
\end{lemma}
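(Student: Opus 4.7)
The plan is to proceed in direct analogy with the proof of Theorem~\ref{thm:collapse1}: construct a four-dimensional pp-power $\bM_2'$ of $\bM_2$ in the signature $(C_3, N, B_2)$ of $\bN_2$, and reuse the encoding $g(0) = (0,1,0,0)$, $g(1) = (1,0,1,0)$, $g(2) = (2,0,0,1)$ together with the piecewise retraction $h\colon\{0,1,2\}^4\to\{0,1,2\}$ from that proof. The pp-formula defining $C_3^{\bM_2'}$ in Theorem~\ref{thm:collapse1} uses only $C_3$ and equalities and so is already a pp-formula over $\bM_2$.

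The novelty is in pp-defining $N^{\bM_2'}$ and $B_2^{\bM_2'}$: the formula for $(R_2^=)^{\bP'}$ in Theorem~\ref{thm:collapse1} uses $B_3$ and $R^{\Rightarrow}_2$, neither of which is pp-definable in $\bM_2$ (a self-dual lift of the Boolean majority function lies in $\Pol(\bM_2)$ but fails to preserve $B_3$). What saves the day is that the singletons $\{0\},\{1\},\{2\}$ are pp-definable in $\bM_2$: for instance, $\{0\}(x)\equiv\exists y\,(\leq_2(x,y)\wedge B_2(x,y))$, so we may freely use $x_3=0$ in pp-formulas. Once $x_3=0$ is enforced, the clause $B_3(x_1,x_2,x_3)$ on the $x$-side collapses to $B_2(x_1,x_2)$, which is in the signature. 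On the $y$-side, a single conjunct $B_2(x_0,y_1)$ encodes precisely the conditional ``if $x_0=0$ then $y_1=1$'' that is needed to force $h(y)=0$ whenever $h(x)=0$, because $B_2(a,b)$ on $\{0,1\}^2$ holds iff at least one of $a,b$ is $1$.

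Assembling these ingredients, I would take
\[
N^{\bM_2'}(x,y)\;\equiv\;B_2(x_1,x_2)\wedge x_3=0\wedge x_0=x_2\wedge y_2\leq_2 x_0\wedge y_3\leq_2 x_0\wedge B_2(x_0,y_1),
\]
and an analogous formula for $B_2^{\bM_2'}(x,y)$ built symmetrically on both sides and augmented by the conjunct $B_2(x_0,y_0)$. The main obstacle is exactly the absence of $B_3$, which the constants and the ``$B_2(x_0,y_1)$'' trick circumvent. With the formulas in hand, the verification that $g$ is a homomorphism from $\bN_2$ to $\bM_2'$ amounts to a routine check on the four tuples of $N$ and the three tuples of $B_2$; the verification that $h$ is a homomorphism from $\bM_2'$ to $\bN_2$ reduces to the usual case distinction on $x_0\in\{0,1\}$ (which is forced by $B_2(x_1,x_2)\wedge x_3=0\wedge x_0=x_2$): in the $x_0=0$ branch the conjuncts force $(y_1,y_2,y_3)=(1,0,0)$ and hence $h(y)=0$, while in the $x_0=1$ branch $h(x)=1$ places no further constraint on $h(y)\in\{0,1,2\}$, compatibly with $N$. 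An entirely analogous argument handles $B_2^{\bM_2'}$, completing the pp-construction of $\bN_2$ in $\bM_2$.
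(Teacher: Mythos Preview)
Your proposal is correct and follows essentially the same approach as the paper: the same four-dimensional pp-power with the same $g$, $h$, and $C_3$-formula from Theorem~\ref{thm:collapse1}, and pp-formulas for $N$ and $B_2$ that differ only cosmetically from the paper's (the paper uses the conjunct $x_1 \leq_2 y_1$ where you use $B_2(x_0,y_1)$; both force $y_1=1$ when $x_0=0$, and your symmetrised $B_2$-formula with $B_2(x_0,y_0)$ matches the paper's almost verbatim). One small slip: your formula $\exists y\,(x\leq_2 y \wedge B_2(x,y))$ defines $\{0,1\}$, not $\{0\}$ (the pairs in $\leq_2\cap B_2$ are $(0,1)$ and $(1,1)$); use $\exists y\,(x \leq_2 y \wedge C_3(x,y))$ instead, or simply invoke the core argument from Section~\ref{sect:constants} as the paper tacitly does.
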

\begin{proof}
We again use a four-dimensional pp-power $\bA$ 
of $\bM_2$ and the maps $h$ and $g$ from the proof of Theorem~\ref{thm:collapse1}. 
The structure $\bA$ has domain $A \coloneqq \{0,1,2\}^4$ and signature $\{C_3,N,B_2\}$.
The relation $C_3^{\bA}$ is defined as in the proof of  Theorem~\ref{thm:collapse1}, and 
\begin{align*}
N^{\bA} &\coloneqq \{ (x,y) \in A^2 \mid x_3=0 \wedge x_0 = x_2  \wedge B_2(x_1,x_2) \\
& \quad \quad \quad \quad \quad \, \,\,\quad \quad \quad\quad \quad  \wedge x_1\leq_2 y_1\wedge y_2 \leq_2 x_2 \wedge y_3\leq_2 x_0\} ,
\\
B^{\bA}_2 &\coloneqq \{(x,y) \in A^2 \mid B_2(x_0,y_0)\wedge x_0 = x_2 \wedge y_0 = y_2\wedge x_3 = y_3 = 0\}. 
\end{align*}

\textbf{Claim.} The map $g$ is a homomorphism from $\bN_2$ to $\bA$. \\
We have already verified in the proof of Theorem~\ref{thm:collapse1} that $g$ preserves $C_3$. 
\begin{itemize}
\item Let $(a,b) \in N$. If $a = 0$, then $b=0$, and hence $(x_0,x_1,x_2,x_3) \coloneqq g(a) = (0,1,0,0)$ and $(y_0,y_1,y_2,y_3) \coloneqq g(b) = (0,1,0,0)$
satisfies the formula from the definition of $N^{\bA}$. If $a = 1$, then putting $(x_0,x_1,x_2,x_3) \coloneqq g(a) = (1,0,1,0)$ satisfies $x_3 = 0, x_0 = x_2$, $B_2(x_1,x_2)$. Moreover, putting $(x_0,x_1,x_2,x_3) \coloneqq g(b)$ all the remaining conjuncts in the definition of $N^{\bA}$ are satisfied as well: for $x_1 \leq_2 y_1$ since
$x_1 = 0$, and for $y_2 \leq_2 x_2$ and $y_3 \leq_2 x_0$ since $y_2,y_3 \in \{0,1\}$ and $x_0=x_2=1$. 
\item Let $(a,b) \in B_2$. Then $a,b \in \{0,1\}$, and hence $(g(a),g(b))$ satisfies 
$x_0 = x_2$, $y_0=y_2$, and $x_3=y_3=0$. 
Moreover, $B_2(x_0,y_0)$ holds because otherwise $a=b=0$, contrary to the assumption that $B_2(a,b)$. 
\end{itemize} 

\textbf{Claim.} The map $h$ is a homomorphism from $\bA$ to $\bN_2$. \\
We have already verified in the proof of Theorem~\ref{thm:collapse1} that $h$ preserves $C_3$. Let $(x,y) \in N^{\bA}$.
We must have $x_3=0$ and $x_0 = x_2$
therefore $h(x) \in \{0,1\}$. Since $x_3=0$ and $B_2(x_1,x_2)$ it is impossible that $x_1=x_2=x_3$ and therefore $x \in \{(0,1,0,0),(1,0,1,0),(1,1,1,0)\}$. 
If $x \in \{(1,0,1,0),(1,1,1,0)\}$ then $h(x) = 1$ and 
$(h(x),h(y)) \in N$. If $x = (0,1,0,0)$ then $1 = x_1 \leq_2 y_1$, $y_2 \leq_2 x_2 = 0$, and $y_3 \leq_2 x_0 = 0$, and hence $h(y) = 0$. Thus, $(h(x),h(y)) = (0,0) \in N$. 
 
 Let $(x,y) \in B_2^{\bA}$. Again the conjuncts $x_3=0$ and $x_0=x_2$ imply that $h(x) \in \{0,1\}$, and similarly $h(y) \in \{0,1\}$. If $h(x) = 0$ then $x = (0,0,0,0)$. In this case the conjunct $B_2(x_0,y_0)$ implies that $y_0 \neq 0$, and therefore that $h(y) \neq 0$. So $(h(x),h(y)) \in B_2$. 
 \end{proof}

\begin{lemma}\label{lem:Mpp}
 There is a pp-construction of 
 $\bM_2$ 
 in $\bM$. 
 \end{lemma}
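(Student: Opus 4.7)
The plan is to follow the template of Theorem~\ref{thm:collapse1} and the preceding lemma, constructing a pp-power $\bA$ of $\bM$ together with homomorphisms $g\colon \bM_2 \to \bA$ and $h\colon \bA \to \bM_2$. The new difficulty is that $B_2$ must now be pp-defined inside $\bM = (\{0,1,2\}; C_3, \leq_2)$, which offers no native disjunction-like relation (such as $B_2$ itself, or the $B_n$ used in Theorem~\ref{thm:collapse2}) to start from.

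The key observation that breaks this deadlock is that on $\{0,1\}$
\[ B_2(a,b) \iff (1-a) \leq b \iff (1-b) \leq a, \]
so if each Boolean value $a$ is encoded by the pair $(1-a, a)$ carrying both $a$ and its complement, the clause ``$a \vee b = 1$'' becomes expressible via $\leq_2$ applied across coordinates. This suggests a two-dimensional pp-power $\bA$ on $\{0,1,2\}^2$, with the encoding $g(0) \coloneqq (1,0)$, $g(1) \coloneqq (0,1)$, $g(2) \coloneqq (2,2)$, and the pp-defined relations
\begin{align*}
C_3^{\bA}(x,y) & \iff C_3(y_0,x_0) \wedge C_3(x_1,y_1), \\
\leq_2^{\bA}(x,y) & \iff y_0 \leq_2 x_0 \wedge x_1 \leq_2 y_1, \\
B_2^{\bA}(x,y) & \iff x_0 \leq_2 y_1 \wedge y_0 \leq_2 x_1,
\end{align*}
each of which is a conjunction of atomic formulas over $\bM$. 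That $g$ is a homomorphism is a direct check on the three pairs of each relation.

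For $h$, the relation $C_3^{\bA}$ partitions $\{0,1,2\}^2$ into three disjoint $3$-cycles (indexed by $x_0+x_1 \pmod{3}$): one is the image $g(\{0,1,2\})$, on which $h$ must invert $g$; the second contains $(0,0)$, where $B_2^{\bA}((0,0),(0,0))$ holds and hence forces $h(0,0) = 1$, propagating around that cycle to $h(2,1) = 2$ and $h(1,2) = 0$; the third contains $(1,1)$, where $B_2^{\bA}((1,1),(1,1))$ forces $h(1,1) = 1$ and hence $h(0,2) = 2$, $h(2,0) = 0$. The verification that this uniquely determined $h$ preserves all three relations is a routine enumeration, since $\leq_2^{\bA}$ and $B_2^{\bA}$ each consist of exactly nine pairs all lying in $\{0,1\}^2 \times \{0,1\}^2$. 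The main conceptual step, and arguably the only creative ingredient, is spotting the ``value plus complement'' encoding that lets $B_2$ be captured in $\bM$ by a single cross-coordinate $\leq_2$ comparison; once this is in hand, the rest is mechanical.
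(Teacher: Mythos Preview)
Your proposal is correct and follows essentially the same approach as the paper: a two-dimensional pp-power with the ``value plus complement'' encoding $g$, the same relations $C_3^{\bA}$ and $\leq_2^{\bA}$ (up to the coordinate swap implicit in your choice $g(0)=(1,0)$ versus the paper's $g(0)=(0,1)$), and literally the same homomorphism $h$. The only substantive difference is your definition of $B_2^{\bA}$: you use the symmetric pair of cross-coordinate constraints $x_0\leq_2 y_1 \wedge y_0\leq_2 x_1$, whereas the paper uses the single constraint $x_2\leq_2 y_1$ together with the typing conjuncts $x_1\leq_2 x_1$ and $y_2\leq_2 y_2$; your relation is a proper subset of theirs, but both are preserved by $g$ and mapped into $B_2$ by $h$, so either choice works.
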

\begin{proof} 
Let $\bA$ be the structure with the domain $A\coloneqq \{0,1,2\}^2$ and the signature $\{C_3,\leq_2,B_2\}$  such that 
\begin{align*}
C^{\bA}_3 & \coloneqq \{ (x,y) \in A^2 \mid C_3(x_1,y_1) \wedge C_3(y_2,x_2)\}, \\
\leq_2^{\bA} & \coloneqq \{ (x,y) \in A^2 \mid x_1\leq_2 y_1 \wedge y_2\leq_2 x_2\}, \\
B^{\bA}_2 & \coloneqq \{ (x,y) \in A^2 \mid x_2\leq_2 y_1 \wedge x_1 \leq_2 x_1 \wedge y_2 \leq_2 y_2\}.
\end{align*}

Let us define the map $g \colon \{0,1,2\} \to \{0,1,2\}^2$ as follows: $g(0) \coloneqq (0,1)$, $g(1) \coloneqq (1,0)$, and $g(2) \coloneqq~(2,2)$.

\medskip 
\textbf{Claim 1:} $g$ is a homomorphism from $\bM_{2}$ to $\bA$.
Let $(a,b) \in C_3$. If $(a,b) = (0,1)$ then
$(g(a),g(b)) = ((0,1),(1,0)) \in C_3^{\bA}$. 
If $(a,b)= (1,2)$ then $(g(a),g(b)) = ((1,0),(2,2)) \in C_3^{\bA}$. If $(a,b) = (2,0)$ then $(g(a),g(b)) = ((2,2),(0,1)) \in C_3^{\bA}$. 

Now suppose that $a \leq_2 b$. If $a = b = 0$ then
\[(g(a),g(b)) = ((0,1),(0,1)) \in \; \leq_2^{\bA}\] 
and similarly for $a=b=1$. If $a = 0$ and $b=1$
then \[(g(a),g(b)) = ((0,1),(1,0)) \in \; \leq_2^{\bA}.\]

Finally, suppose that $(a,b) \in B_2$.
Then $a,b \in \{0,1\}$ and hence the entries of
$h(a)$ and $h(b)$ are from $\{0,1\}$ as well. Thus, 
the final two conjuncts in the definition of $B_2^{\bA}$ are always satisfied. 
If $a=b=1$
then $(g(a),g(b)) = ((1,0),(1,0)) \in B_2^{\bA}$
since $0 \leq_2 1$.  
If $a=1$ and $b=0$ then $(g(a),g(b)) = ((1,0),(0,1)) \in B_2^{\bA}$ since $0 \leq_2 0$. And if $a=0$ and $b=1$ then
$(g(a),g(b)) = ((0,1),(1,0)) \in B_2^{\bA}$ since $1 \leq_2 1$. 

\medskip 
Let $h \colon \{0,1,2\}^2 \to \{0,1,2\}$ be defined  by 
\begin{align*}
h(0,1) & =h(0,2)=h(2,1)=0,\\
h(0,0) & =h(1,0)=h(1,1)=1,\\
h(2,0) & =h(1,2)=h(2,2)=2.
\end{align*}
Note that $h(a,b) = a$ for every $a,b \in \{0,1,2\}$ with three exceptions: $h(2,1) = 0$, $h(0,0) = 1$, and $h(1,2) = 2$; we call $(2,1),(1,2),(0,0)$ the \emph{exceptional points} and all other points \emph{regular}. 

\textbf{Claim 2:} $h$ is a homomorphism from $\bA$ to $\bM_2$. 
Let $(a,b) \in C_3^{\bA}$. 
Then $C_3(a_1,b_1)$ and $C_3(b_2,a_2)$. 
In particular, note that $a$ equals $(2,1)$
if and only if $b$ equals $(0,0)$. In this case, $(h(a),h(b)) = (0,1) \in C_3$ and we are done. Similarly, we verify the statement if $a$ equals $(1,2)$ or $(0,0)$. 
Also note that $a$ is an exceptional point for $h$ if and only if $b$ is. If $a$ and $b$ are regular then $(h(a),h(b)) = (a_1,b_1) \in C_3$ and we are done. 

Now let $(a,b) \in \; \leq_2^{\bA}$. Then by definition 
we have $a_1 \leq_2 b_1$ and $b_2 \leq_2 a_2$. 
Note that in particular, neither $a$ nor $b$ can be
the exceptional points $(1,2)$ or $(2,1)$ for $g$. 
If $a = (0,0)$ then $b_1,b_2 \in \{0,1\}$,
hence $h(b) \in \{0,1\}$, and 
thus $h(b) \in \{0,1\}$ and $h(b) \leq_2 h(a) = 1$. 
For the regular points the verification is again immediate. 

Finally, let $(a,b) \in B_2^{\bA}$. Note that then 
$a_1,a_2,b_1,b_2 \in \{0,1\}$, hence 
$h(a),h(b) \in \{0,1\}$. Suppose that $h(b) = 0$.
By the definition of $B_2^{\bA}$, we have $a_2 \leq_2 b_1 = 0$,
so we have $h(a) = h(0,0)= 1$. This shows that $(h(a),h(b)) \in B_2$. 
\end{proof}

\subsection{The Spine}
We now discuss collapses in the spine; 
again, some of them can be proved by exhibiting pp-constructions, while in one case it was more convenient to directly exhibit a minor-preserving map (in Proposition~\ref{prop:tn-dm}). 
The following structures are at the bottom of the spine in Figure~\ref{fig:Dima}; we show that they are equivalent with respect to pp-constructability. 
\begin{align*}
    \mathfrak{TL}_\leq &\coloneqq (\{0,1,2\}; C_3,T,L_2,\leq_2) & \mathfrak{K}^{C_3}_3 &\coloneqq (\{0,1,2\};C_3, \neq)\\
        {\mathfrak L}_\leq & \coloneqq (\{0,1,2\}; C_3,L_2,\leq_2)    &  \mathfrak{K}_3 &\coloneqq (\{0,1,2\}; \neq,\{0\},\{1\},\{2\})
     \end{align*}
where 
\begin{align}
L_2  & \coloneqq \{(x_1,x_2,x_3) \in \{0,1\}^3 \mid x_1 +_2 x_2 +_2 x_3 = 0\} \label{def:L} \\
\text{ and } \quad T & \coloneqq \{(0,1),(1,0),(2,2)\}. 
\label{def:T}
\end{align}

\begin{lemma}
The structures $\mathfrak{K}_3$, $\mathfrak{K}^{C_3}_3$, $\mathfrak{TL}_{\leq}$, and $\mathfrak{L}_\leq$  pairwise pp-interpret each other (and hence in particular pp-construct each other). 
\end{lemma}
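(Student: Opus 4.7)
The plan is to reduce the lemma to computing the four polymorphism clones up to homomorphic equivalence, to observe that all four are equivalent via the clone $\mathbf{J}_3$ of projections on $\{0,1,2\}$, and to conclude by Theorem~\ref{thm:pp-interpret}. This strategy avoids constructing explicit (and somewhat delicate) pp-interpretations; instead one uses the clone-homomorphism characterisation of pp-interpretability.

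The first and main step is to verify $\Pol(\mathfrak{K}_3)=\mathbf{J}_3$. Preservation of $\{0\},\{1\},\{2\}$ forces every polymorphism to be idempotent, and preservation of $\neq$ on a three-element domain is very rigid: for binary $f$, the value $f(0,1)=2$ immediately contradicts $\neq$-preservation against $(2,2)$, and the two remaining cases $f(0,1)\in\{0,1\}$ propagate through the $\neq$-constraints (together with idempotence) to $f=\pi_1$ and $f=\pi_2$ respectively. The $n$-ary case follows by the same style of argument; equivalently this is the classical ``projective'' rigidity of the triangle with constants witnessed by NP-hardness of $\Csp(\mathfrak{K}_3)$.

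Next I would exhibit, for each of the remaining three structures $\mathfrak{X}\in\{\mathfrak{K}_3^{C_3},\mathfrak{L}_\leq,\mathfrak{TL}_\leq\}$, a clone homomorphism $\xi\colon\Pol(\mathfrak{X})\to\mathbf{J}_3$. For $\mathfrak{K}_3^{C_3}$, repeating the binary $\neq$-analysis but replacing idempotence by the self-duality forced by $C_3$ shows that every polymorphism has the form $(x_1,\dots,x_n)\mapsto x_i+_3 c$; the assignment $\xi(f)\coloneqq\pi_i$ is easily seen to respect arity, projections, and composition, since composing maps of this form only alters the shift $c$, not the coordinate $i$. For $\mathfrak{L}_\leq$ (and hence for its sub-clone $\Pol(\mathfrak{TL}_\leq)$), the restriction of any polymorphism $f$ to $\{0,1\}^n$ is (i) affine over $\mathbb{F}_2$, since $f$ preserves $L_2$; (ii) monotone, since $f$ preserves $\leq_2$; and (iii) non-constant, since the irreflexivity of $C_3$ forbids constant polymorphisms. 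These three Boolean conditions jointly characterise the projections, so $f|_{\{0,1\}^n}=\pi_i$ for a unique $i$, and $\xi(f)\coloneqq\pi_i$ is a clone homomorphism because restriction to $\{0,1\}^n$ commutes with composition.

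In the reverse direction, the assignment $\pi_i\mapsto\pi_i$ is a clone homomorphism $\mathbf{J}_3\to\Pol(\mathfrak{X})$ for every $\mathfrak{X}$, because projections lie in every polymorphism clone. All four clones are therefore clone-hom equivalent to $\mathbf{J}_3$, and hence pairwise clone-hom equivalent; two applications of Theorem~\ref{thm:pp-interpret} yield pp-interpretations in both directions between any pair (one may alternatively compose through $\mathfrak{K}_3$ using transitivity of pp-interpretability). The main technical obstacle is the rigorous verification that $\Pol(\mathfrak{K}_3)=\mathbf{J}_3$, i.e.\ the rigidity of the triangle with constants; once this is in hand, the remaining restriction and composition-preservation checks are routine polymorphism bookkeeping.
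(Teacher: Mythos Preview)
Your approach is correct and is the clone-theoretic dual of the paper's argument. The paper shows directly that each of the four structures pp-interprets a structure whose polymorphisms are only projections (citing the standard fact for $(\{0,1,2\};\neq)$, and Post's lattice for the Boolean substructure $(\{0,1\};L_2,\leq_2,\{0\},\{1\})$ of $\mathfrak{L}_\leq$), whereas you construct the corresponding clone homomorphisms to $\mathbf{J}_3$ explicitly and then invoke Theorem~\ref{thm:pp-interpret}. In particular, your restriction-to-$\{0,1\}$ step for $\mathfrak{L}_\leq$ is exactly the paper's one-dimensional interpretation, and your description of $\Pol(\mathfrak{K}_3^{C_3})$ corresponds to the paper's remark that $\neq$ is pp-definable there. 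Both routes amount to showing that each clone maps onto a projection clone, hence the structures pairwise pp-interpret one another.

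One small imprecision: for $\mathfrak{L}_\leq$ you argue that the Boolean restriction $f|_{\{0,1\}^n}$ is non-constant because ``irreflexivity of $C_3$ forbids constant polymorphisms''. Irreflexivity only rules out globally constant $f$; it does not by itself prevent $f|_{\{0,1\}^n}$ from being constant. The correct (and equally short) argument is that preservation of $C_3$ gives $f(1,\dots,1)=f(0,\dots,0)+_3 1$, so the restriction already takes two distinct values on the diagonal. With this fix your proof goes through.
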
 
\begin{proof}
Every finite structure has a pp-interpretation in 
$(\{0,1,2\};\neq)$ (see, e.g.,~\cite{Book}). The structure $\mathfrak{TL}_\leq$ is an expansion of $\mathfrak{L}_\leq$. Finally, observe that 
$\mathfrak{L}_\leq$ interprets primitively positively 
the structure 
\[(\{0,1\}; L_2, \leq_2,\{0\},\{1\})\] 
and that all polymorphisms of this structure are projections~\cite{Post}. Hence, $\mathfrak{L}_\leq$ pp-interprets all finite structures (see, e.g., Theorem 6.3.10 in~\cite{Book}). Similarly, note that $\mathfrak{K}^{C_3}_3$ pp-defines $(\{0,1,2\};\neq)$, which in turn pp-interprets all finite structures. Hence, so does $\mathfrak{K}^{C_3}_3$.
\end{proof}

In the lattice of clones of self-dual operations  (Figure~\ref{fig:Dima})  
the clone $\Pol({\mathfrak L}_{\leq}$) has the cover
$\Pol({\mathfrak L}_2)$ where
\[{\mathfrak L}_2 \coloneqq (\{0,1,2\};C_3,L_2)\]
and the clone $\Pol(\mathfrak{TL}_\leq)$ has the cover $\Pol({\mathfrak{TL}}_2)$ where  \[\mathfrak{TL}_2 \coloneqq (\{0,1,2\};C_3,T,L_2).\]

\begin{proposition}\label{prop:tl-l}
The structures $\mathfrak{TL}_2$ and  ${\mathfrak L}_2$
pp-construct each other. 
\end{proposition}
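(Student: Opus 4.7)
One direction is immediate: $\mathfrak{L}_2$ is a reduct of $\mathfrak{TL}_2$, so the identity interpretation of dimension~$1$ witnesses that $\mathfrak{TL}_2$ pp-constructs $\mathfrak{L}_2$. For the converse, that $\mathfrak{L}_2$ pp-constructs $\mathfrak{TL}_2$, I plan to follow the template of Lemma~\ref{lem:Mpp}: exhibit a two-dimensional pp-power $\mathfrak{A}$ of $\mathfrak{L}_2$ together with homomorphisms $g \colon \mathfrak{TL}_2 \to \mathfrak{A}$ and $h \colon \mathfrak{A} \to \mathfrak{TL}_2$, whence the conclusion will follow from Theorem~\ref{thm:wonderland}.

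The map $g$ will be the same one used in Lemma~\ref{lem:Mpp}, namely $g(0) \coloneqq (0,1)$, $g(1) \coloneqq (1,0)$, $g(2) \coloneqq (2,2)$. The structure $\mathfrak{A}$ has signature $\{C_3, T, L_2\}$ with relations interpreted as
\begin{align*}
C_3^{\mathfrak{A}}(X, Y) & \coloneqq C_3(X_1, Y_1) \wedge C_3(Y_2, X_2), \\
T^{\mathfrak{A}}(X, Y) & \coloneqq X_1 = Y_2 \wedge X_2 = Y_1, \\
L_2^{\mathfrak{A}}(X, Y, Z) & \coloneqq L_2(X_1, X_2, 1) \wedge L_2(Y_1, Y_2, 1) \wedge L_2(Z_1, Z_2, 1) \wedge L_2(X_1, Y_1, Z_1).
\end{align*}
All three relations are pp-definable in $\mathfrak{L}_2$: the constant $1$ is itself pp-definable, since $L_2(y,y,y)$ pins down $0$ and then $C_3(y, x)$ pins down $1$. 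The conjuncts $L_2(\cdot,\cdot,1)$ act as a guard forcing each of $X, Y, Z$ to lie in $\{(0,1),(1,0)\} = g(\{0,1\})$, so that $L_2^{\mathfrak{A}}$ coincides exactly with $g(L_2)$ rather than with the more permissive componentwise product.

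The map $h \colon \{0,1,2\}^2 \to \{0,1,2\}$ is then forced by three considerations. First, $h(0,1) = 0$, $h(1,0) = 1$, $h(2,2) = 2$ invert $g$ on its image. Second, $T^{\mathfrak{A}}$ contains the diagonal pairs $((0,0),(0,0))$ and $((1,1),(1,1))$, so $h(0,0) = h(1,1) = 2$, since $(2,2)$ is the only element of $T$ on the diagonal of $\{0,1,2\}^2$. Third, the remaining four points $(0,2), (1,2), (2,0), (2,1)$ lie on $C_3^{\mathfrak{A}}$-orbits that, together with the values already fixed, pin down $h(0,2) = h(2,1) = 1$ and $h(1,2) = h(2,0) = 0$. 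Checking that $g$ and $h$ are homomorphisms is then a routine tuple-by-tuple verification for each of $C_3$, $T$, and $L_2$. The main obstacle the guards resolve is the following tension: a naive componentwise definition of $L_2^{\mathfrak{A}}$ would contain $((0,0),(0,0),(0,0))$, forcing $h(0,0) \in \{0,1\}$ and contradicting the value $h(0,0) = 2$ required by $T^{\mathfrak{A}}$; trimming $L_2^{\mathfrak{A}}$ down to $g(L_2)$ via the $L_2(\cdot,\cdot,1)$ guards removes this conflict.
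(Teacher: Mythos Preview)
Your proposal is correct and follows essentially the same approach as the paper: the same two-dimensional pp-power $\mathfrak{A}$ with the same definitions of $C_3^{\mathfrak{A}}$, $T^{\mathfrak{A}}$, and $L_2^{\mathfrak{A}}$, the same $g$, and the same $h$ (the paper presents $h$ by the closed-form rule $h(x,y)=0$ if $C_3(x,y)$, $=1$ if $C_3(y,x)$, $=2$ if $x=y$, which agrees with the values you derive). Your explicit justification that the constant $1$ is pp-definable in $\mathfrak{L}_2$ and your explanation of why the guards on $L_2^{\mathfrak{A}}$ are needed are welcome additions.
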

\begin{proof}
Since $\mathfrak{TL}_2$ is an expansion of ${\mathfrak L}_2$ 
it suffices to prove that $\mathfrak{L}_2$ pp-constructs $\mathfrak{TL}_2$. We consider the pp-power $\bA$ of ${\mathfrak L}_2$ with domain $\{0,1,2\}^2$ and the same signature as $\mathfrak{TL}_2$ whose relations are defined as follows. 
\begin{align*}
    C^{\bA}_3 & \coloneqq \{(x,y) \in A^2 \mid C_3(x_1,y_1) \wedge C_3(y_2,x_2) \} 
    \\
    T^{\bA} &\coloneqq \{(x,y) \in A^2 \mid x_1=y_2\wedge x_2=y_1 \} 
    \\
    L_2^{\bA} &\coloneqq 
   \{(x,y,z) \in A^3 \mid L_2(x_1,y_1,z_1)\wedge L_2(x_1,x_2,1) 
   \wedge L_2(y_1,y_2,1)\wedge L_2(z_1,z_2,1) \}.
\end{align*}
We prove that $\mathfrak{TL}_2$ and ${\bA}$ 
are homomorphically equivalent. Let 
$g \colon \{0,1,2\} \to \{0,1,2\}^2$
be defined by 
$g(0) \coloneqq (0,1)$, 
$g(1) \coloneqq (1,0)$, and 
$g(2) \coloneqq (2,2)$. Then $g$ is a homomorphism from $\mathfrak{TL}_2$ to ${\mathfrak A}$: the proof that $g$ preserves $C_3$ we have already seen this in the proof of Lemma~\ref{lem:Mpp}. Now suppose that 
$(x,y) \in T$. If $(x,y) = (0,1)$ then $(g(0),g(1)) = ((0,1),(1,0)) \in T^{\mathfrak{A}}$. 
For $(x,y) \in \{(1,0),(2,2)\}$ the argument is similarly straightforward. Finally, suppose that
$(x,y,z) \in L$. Then $x,y,z \in \{0,1\}^2$, and hence $g(x),g(y),g(z) \in \{(0,1),(1,0)\}$. Hence, the conjuncts
$L_2(x_1,x_2,1)$, $L_2(y_1,y_2,1)$, $L_2(z_1,z_2,1)$ in the definition of $L_2^{\bA}$ are satisfied. 
Moreover, $x +_2 y +_2 z = 0$ implies that
$g(x)_1 +_2 g(y)_1 +_2 g(z)_1 = 0$, and hence
$(g(x),g(y),g(z)) \in L_2^{\bA}$. 

Let $h \colon \{0,1,2\}^2 \to \{0,1,2\}$ be defined as 
\begin{align*}
h(x,y) \coloneqq \begin{cases}
0 & \text{if } C_3(x,y) \\
1 & \text{if } C_3(y,x) \\
2 & \text{if } x=y. 
\end{cases}
\end{align*}
We prove that $h$ is a homomorphism from ${\mathfrak A}$ to $\mathfrak{TL}_2$. 
Let $((x_1,x_2),(y_1,y_2)) \in C_3^{\bA}$.
Then $C_3(x_1,y_1)$ and $C_3(y_2,x_2)$.  
If $h(x_1,x_2) = 0$, then 
$C_3(x_1,x_2)$, and hence $C_3(y_2,y_1)$,
and therefore $h(y_1,y_2) = 1$. Hence, 
$(h(x_1,x_2),h(y_1,y_2)) \in C_3$.
The verification if $h(x_1,x_2) \in \{1,2\}$ is similarly straightforward. 
The proof that $h$ preserves $T$ is similar as well. Finally, suppose that 
\[((a_1,b_1),(a_2,b_2),(a_3,b_3)) \in L_2^{\bA}.\] 
Then $L(a_i,b_i,1)$, for $i \in \{1,2,3\}$, implies that $(a_i,b_i) \in \{(0,1),(1,0)\}$, and hence $h(a_i,b_i) \in \{0,1\}$. Note that in this case $h(a_i,b_i) = a_i$, and hence $L_2(a_1,a_2,a_3)$ implies
that $(h(a_1,b_1),h(a_2,b_2),h(a_3,b_3)) \in L$. 
\end{proof}

Define
\begin{align*}
\mathfrak{TN} & 
\coloneqq (\{0,1,2\};C_3,T,N,N^*) \\
\text{ and } \quad \mathfrak{DM} & \coloneqq (\{0,1,2\};C_3,C_2,\leq_2). 
\end{align*}
where $C_2 \coloneqq \{(0,1),(1,0)\}$
and $N^* = \{(0,0),(0,1),(1,1),(0,2)\}$ is dual with respect to transposition to the relation $N = \{(0,0),(1,0),(1,1),(1,2)\}$ defined in~(\ref{eq:N}). 
Zhuk~(\cite{Zhuk15}, Theorem~28) proved that
\begin{align*}
\mathbf{TN} \coloneqq \Pol(\mathfrak{TN}) & = [m] \\
\text{ and } \quad \mathbf{DM} \coloneqq \Pol(\mathfrak{DM}) & = [m,p]
\end{align*}
where the operations $m$ and $p$ are defined as follows: 
\begin{align}
    m(x,y,z)& \coloneqq \begin{cases}
    (x\wedge_3 y)\vee_3 (x\wedge_3 z)\vee_3 (y\wedge_3 z) & \text{ if } |\{x,y,z\}|\leq 2
    \\x & \text{ otherwise,}
    \end{cases}
    \label{def:m}
    \\p(x,y,z)& \coloneqq \begin{cases}
    x & \text{ if } |\{x,y,z\}|\leq 2
    \\ x+_3 1 & \text{ otherwise.}
    \end{cases}
    \label{def:p}
\end{align}

Rather than proving that $\mathfrak{TN}$ and
$\mathfrak{DM}$ pp-construct each other, we follow a different strategy and directly work with the respective clones and minor preserving maps. 

\begin{proposition}\label{prop:tn-dm}
The clones $\mathbf{TN}$ and $\mathbf{DM}$ are minor equivalent. 
\end{proposition}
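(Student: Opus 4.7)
The plan is to prove the two inequalities $\mathbf{TN} \leq_{\operatorname{minor}} \mathbf{DM}$ and $\mathbf{DM} \leq_{\operatorname{minor}} \mathbf{TN}$ separately.

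For the first inequality, I would verify that $m$ preserves the relations $C_2$ and $\leq_2$ defining $\mathfrak{DM}$ (preservation of $C_3$ is automatic since $m \in \mathbf{TN}$). Every tuple in $C_2$ or in $\leq_2$ has its entries in $\{0,1\}$, so on any such triple of arguments $m$ acts as the Boolean majority, which preserves every monotone Boolean relation. Thus $m \in \mathbf{DM}$, whence $\mathbf{TN} = [m] \subseteq [m,p] = \mathbf{DM}$; the inclusion map is a clone homomorphism and in particular minor-preserving.

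For the reverse inequality, I would construct a minor-preserving map $\xi \colon \mathbf{DM} \to \mathbf{TN}$ directly. The starting observation is a side-by-side examination of the ternary minors: every binary minor of $m$ is a projection selecting the majority coordinate of $(x_{\sigma(1)}, x_{\sigma(2)}, x_{\sigma(3)})$, whereas every binary minor of $p$ is simply $\pi^2_{\sigma(1)}$, the projection selecting the first argument. On the level of generators, this suggests the assignment $\xi(m) \coloneqq m$ and $\xi(p) \coloneqq \pi^3_1$, with $\xi$ fixing all projections: then $\xi(p)_\sigma = \pi^2_{\sigma(1)} = p_\sigma$ and $\xi(m)_\sigma = m_\sigma$ hold simultaneously for every non-injective $\sigma \colon \{1,2,3\} \to \{1,2\}$, and $\pi^3_1 \in \mathbf{TN}$ trivially while $m \in \mathbf{TN}$ by construction.

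The principal obstacle, and what I expect to be the hardest step, is to extend this generator-level data to a well-defined minor-preserving map on the whole clone $\mathbf{DM} = [m,p]$; the risk is that two distinct terms $t_1(m,p), t_2(m,p)$ representing the same operation in $\mathbf{DM}$ yield different operations in $\mathbf{TN}$ after the substitution $p \mapsto \pi^3_1$. I would address this either by (a) exhibiting a normal form for elements of $[m,p]$ based on Zhuk's classification (Theorem~28 of~\cite{Zhuk15}) and verifying that the substitution respects this normal form, or (b) arguing indirectly by showing that every idempotent minor condition satisfied in $\mathbf{DM}$ is also satisfied in $\mathbf{TN}$, converting each witness in $[m,p]$ into a witness in $[m]$ via the substitution; by the compactness-based characterization of $\leq_{\operatorname{minor}}$ recalled in Section~\ref{sect:minor-cond}, this already yields the required minor-preserving map.
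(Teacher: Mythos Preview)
Your easy direction is fine and matches the paper. For the hard direction, however, you have only a plan, and the specific mechanism you propose --- substituting $p \mapsto \pi^3_1$ in term representations --- is not justified and is where the real work lies. The substitution certainly agrees with the intended map on $2$-valued inputs (since both $m$ and $p$ preserve every $2$-element subset and $p$ acts as $\pi^3_1$ there), but on $3$-valued inputs there is no reason the substituted terms should still satisfy the required identities: different term representations of the same operation in $\mathbf{DM}$ may substitute to different operations in $\mathbf{TN}$, so approach~(a) requires a genuine normal-form theorem that you do not supply, and approach~(b) requires proving that the \emph{particular} substituted witnesses still satisfy each minor identity on tuples with three distinct values, which you do not do. Saying that this is ``the principal obstacle'' is accurate, but it is also the entire content of the proposition.

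The paper avoids the term-substitution problem entirely. It first classifies the ternary operations of $\mathbf{DM}$: each is determined by its restriction to $\{0,1\}$ (necessarily a projection $\pi^3_i$ or the Boolean majority) together with its values on $(0,1,2)$ and $(0,2,1)$, and in the majority case there is always some $i$ with $f=f_{\sigma_i}$. Based on this data, $\xi$ is defined directly on \emph{all} ternary operations (sending the projection case to $\pi^3_i$ and the majority case to the matching permuted copy of $m$), verified to be minor-preserving on arity $\leq 3$, and then extended to arbitrary arity via the $3$-element-domain trick $\Xi(f)(a)\coloneqq\xi(f_a)(0,1,2)$, where $f_a$ is the ternary minor obtained by substituting $x_{a_j}$ for the $j$-th variable. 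The last step --- showing $\Xi(f)\in\mathbf{TN}$ --- is done by checking preservation of $C_3$ and $T$ (which reduces to ternary minors) and of $N$ and $N^*$ (which reduces to $4$-ary minors and uses a case analysis exploiting that the original $f$ preserves $\leq_2$). None of this structure is present in your proposal; in particular, your assignment $\xi(m)=m$, $\xi(p)=\pi^3_1$ is consistent with the paper's $\xi$, but you are missing the classification of ternary operations, the extension mechanism, and the verification that the image lies in $\mathbf{TN}$.
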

\begin{proof}
Since $\mathbf{DM}$ contains $\mathbf{TN}$, it
suffices to find a minor-preserving map $\Xi$ from
$\mathbf{DM}$ to $\mathbf{TN}$. 
We first define a minor-preserving map $\xi$ over $\mathbf{DM}^{(3)}$, i.e., the set of all operations of arity at most three in $\mathbf{DM}$ (see \textbf{Claim~1}). Then we extend $\xi$ to a minor-preserving map $\Xi$ from $\mathbf{DM}$ to the clone of all operations on $\{0,1,2\}$, and finally 
we show that the image of $\Xi$ lies in $\mathbf{TN}$ (see \textbf{Claim~2} and \textbf{Claim~3}). 

Note that every binary operation of $\mathbf{DM}$ must be a projection: this follows by induction over the generation process from the fact that $\mathbf{DM} = [m,p]$ is generated by $m$ and $p$, since any operation obtained from the ternary operations $m$ and $p$ by identifying arguments is a projection. 
Moreover, every ternary operation of $\mathbf{DM}$ restricted to $\{0,1\}$ must be
either a projection or the ternary majority operation; again, this is easy to show by induction over the generation process. 

For $i \in \{1,2\}$, we define $\xi(\pr^2_i) \coloneqq \pr^2_i$. 
Let $f \in \mathbf{DM}^{(3)}$. Note that every operation in $\mathbf{DM}$ preserves $\{0,1\}$; if the restriction of $f$ to $\{0,1\}$ is a projection to the $i$-th argument, 
then we define $\xi(f) \coloneqq \pr^3_i$. 
Suppose now that the restriction of $f$ to $\{0,1\}$ is a majority operation. Note that for $b_1,b_2,b_3 \in \{0,1,2\}$ and $i \in \{1,2\}$ we have 
$f(b_1 +_3 i,b_2 +_3 i,b_3 +_3 i) = f(b_1,b_2,b_3) +_3 i$ since $f$ preserves $C_3$. Hence,
$f$ is fully determined by its values for $(0,1,2)$ and on $(0,2,1)$. Moreover, for every $i\in\{1,2,3\}$, let $\sigma_i\colon\{1,2,3\}\to\{1,2,3\}$ be the map that fixes $i$ and permutes the remaining two elements. For every $f\in\mathbf{DM}^{(3)}$, there exists an $i\in\{1,2,3\}$ such that $f=f_{\sigma_i}$:

\begin{itemize}
\item If $f(0,1,2) = f(0,2,1)$, then $f=f_{\sigma_1}$
and we define $\xi(f) \coloneqq m(x,y,z)$. Note that $m(x,y,z)=m_{\sigma_1}$. 
\item If $f(0,1,2) = f(0,2,1) +_3 1 = f(1,0,2)$, 
then $f=f_{\sigma_3}$ 
and we define $\xi(f) \coloneqq m(z,y,x)=m_{\sigma_2}$. 
\item If $f(0,1,2) = f(0,2,1) +_3 2 = f(2,1,0)$, then $f=f_{\sigma_2}$ 
and we define $\xi(f) \coloneqq m(y,x,z)=m_{\sigma_3}$.
\end{itemize}
For the sake of notation, let us define a map $\nu\colon\{1,2,3\}\to\{\sigma_1,\sigma_2,\sigma_3\}$ such that $\nu(1)=\sigma_1$, $\nu(2)=\sigma_3$, and $\nu(3)=\sigma_2$. Note that, if $f=f_{\sigma_i}$, then $\xi(f)=m_{\nu(i)}$.

\textbf{Claim~1:} The map $\xi$ is minor-preserving. If $f\in \mathbf{DM}^{(3)}$ is such that its restriction to $\{0,1\}$ is a projection, then it can be easily checked that $\xi(f_\pi) = \xi(f)_{\pi}$ for every $\pi \colon \{1,2,3\} \to \{1,2,3\}$. Now let us consider the case where the restriction of $f$ to $\{0,1\}$ is the majority operation. If $\pi \colon \{1,2,3\} \to \{1,2,3\}$ in not injective, then  $\pi(j)=\pi(k)=i$ for some $i,j,k\in\{1,2,3\}$; in this case, $\xi(f_{\pi}) = \pr^3_{i} = \xi(f)_\pi$. Let us finally consider the case where $\pi$ is injective. If $f\in \mathbf{DM}^{(3)}$ is such that $f=f_{\sigma_i}$ for some $i\in\{1,2,3\}$, then first applying $\pi$ we get $f_\pi = f_{\pi\circ\sigma_i}$ and then via $\xi$ we get $m_{\pi\circ\nu(i)}$. On the other side, if we first apply $\xi$ we get $m_{\nu(i)}$ which is mapped to $m_{\pi\circ\nu(i)}$ by $\pi$.

Since $\mathbf{DM}$ is defined on a set of cardinality three, the map $\xi$ naturally extends to a map $\Xi$ from $\mathbf{DM}$ to the set of all operations on 
$\{0,1,2\}$ as follows: 
for every $f \in \mathbf{DM}$ of arity $n$ and $a_1,\dots,a_n \in \{0,1,2\}$, let $f'$ be the ternary operation in $\mathbf{DM}$ defined by $f'(x_0,x_1,x_2) \coloneqq f(x_{a_1},\dots,x_{a_n})$. Then let $\Xi(f)$ 
be the $n$-ary operation on $\{0,1,2\}$ that maps
$(a_1,\dots,a_n)$ to $\xi(f')(0,1,2)$. 

The map $\Xi$ is minor-preserving by definition, so we are left with showing that $\Xi(f) \in \mathbf{TN}$ for every $f \in \mathbf{DM}$. 

\textbf{Claim~2:} The operation $\Xi(f)$ preserves $C_3$ and $T$. Observe that, since $|C_3|=|T|=3$, it is sufficient to prove the claim for ternary operations in $\mathbf{DM}$; indeed, if $f$ does not preserve $C_3$ or $T$, then there is a ternary minor of $f$ that does not preserve $C_3$ or $T$. If $g$ is a ternary operation in $\mathbf{DM}$, then $\Xi(g)= \xi(g)$ and therefore the claim holds since $\xi(g)\in\mathbf{TN}$ by the definition of $\xi$.

\textbf{Claim~3:} $\Xi(f)$ preserves $N$. It suffices to show that every 4-variable minor of $f$ preserves $N$, because $|N|=4$. 
Let $g \in \mathbf{DM}^{(4)}$ and let
$(a_1,b_1),(a_2,b_2),(a_3,b_3),(a_4,b_4) \in N$.
We may 
suppose that if $i,j \in \{1,2,3,4\}$ are distinct, then $(a_i,b_i) \neq (a_j,b_j)$. Indeed, suppose that
for $i=1$ and $j=2$ we have 
$(a_i,b_i) = (a_j,b_j)$. Then let $g'$ be the ternary minor of $g$ defined by $g'(x,y,z) \coloneqq g(x,x,y,z)$, and note that  
\begin{align*}
 \big (\Xi(g)(a_1,a_2,a_3,a_4),\Xi(g)(b_1,b_2,b_3,b_4) \big ) 
=  \big (\xi(g')(a_1,a_3,a_4),\xi(g')(b_1,b_3,b_4) \big )
\end{align*} 
and we conclude that $\Xi(g)$ preserves $N$ since 
$\xi(g') \in \mathbf{TN}$. The argument for different pairs of distinct elements $i,j \in \{1,2,3,4\}$ is similar. 
By permuting arguments of $g$, we may therefore assume without loss of generality that \[\big ((a_1,b_1),(a_2,b_2),(a_3,b_3),(a_4,b_4) \big ) = \big ((0,0),(1,0),(1,1),(1,2) \big).\]

Let $g_1,g_2 \in \mathbf{DM}$ be the ternary operations
given by 
\begin{align*}
g_1(x_0,x_1,x_2) & \coloneqq g(x_{a_1},x_{a_2},x_{a_3},x_{a_4}) = g(x_0,x_1,x_1,x_1) \\
\text{ and } 
g_2(x_0,x_1,x_2) & \coloneqq g(x_{b_1},x_{b_2},x_{b_3},x_{b_4}) = g(x_0,x_0,x_1,x_2).
\end{align*}

Since $g_1$ must be a projection to the first or second argument we obtain 
\[\Xi(g)(0,1,1,1) = \xi(g_1)(0,1,2) \in \{0,1\}.\]
To show that $(\Xi(g)(0,1,1,1),\Xi(g)(0,0,1,2)) \in N$ it therefore suffices to show that if
$\Xi(g)(0,1,1,1) = 0$, then 
$\Xi(g)(0,0,1,2) = 0$. 
Suppose that $\Xi(g)(0,1,1,1) = 0$. Since $\Xi(g)(0,1,1,1) = \xi(g_1)(0,1,2)$ and $\xi(g_1)$ is a projection, we then must have $\xi(g_1) = \pr^3_1$. 
By the definition of $\xi$, this implies that $g_1 = \pr^3_1$, hence $g(0,1,1,1)=0$. 

Note that the restriction $g_2'$ of $g_2$ to $\{0,1\}$ is a projection or a majority operation. 
Suppose for contradiction that $g_2'$ is the second projection. 
Then $g(0,0,1,0) = g'_2(0,1,0) = 1$, but $g(0,1,1,1) = 0$, a contradiction to the assumption that $g$ preserves $\leq$. 
If $g_2'$ to $\{0,1\}$ is the third projection then
$g(0,0,0,1) = g'_2(0,0,1) = 1$, which similarly leads to a contradiction. 
If $g_2'$ is the majority operation then
$g(0,0,1,1) = g'_2(0,1,1) = 1$, again leading to a contradiction. 
Hence, $g_2'$ is must be the first projection. 
It follows that $\Xi(g)(0,0,1,2) = \xi(g_2)(0,1,2) = \pr^3_1(0,1,2) = 0$. 

The proof for $N^*$ is analogous. 
\end{proof}

Define
\begin{align*}
\mathfrak{TD} & 
\coloneqq (\{0,1,2\};C_3,C_2,T) \\
\text{ and } \quad \mathfrak{D} & \coloneqq (\{0,1,2\};C_3,C_2). 
\end{align*}
The following can be shown similarly as Proposition~\ref{prop:tl-l}. 

\begin{proposition}\label{prop:td-d}
The structures $\mathfrak{TD}$ and $\mathfrak{D}$ pp-construct each other. 
\end{proposition}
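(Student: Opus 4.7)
Since $\mathfrak{TD}$ is an expansion of $\mathfrak{D}$, the direction $\mathfrak{D} \to \mathfrak{TD}$ is immediate. For the converse, the plan is to follow the strategy of Proposition~\ref{prop:tl-l} and exhibit $\mathfrak{TD}$ as homomorphically equivalent to a two-dimensional pp-power $\bA$ of $\mathfrak{D}$, with domain $A \coloneqq \{0,1,2\}^2$ and the signature of $\mathfrak{TD}$. Concretely, I would set
\begin{align*}
C_3^{\bA} & \coloneqq \{(x,y) \in A^2 \mid C_3(x_1,y_1) \wedge C_3(y_2,x_2)\}, \\
T^{\bA} & \coloneqq \{(x,y) \in A^2 \mid x_1 = y_2 \wedge x_2 = y_1\}, \\
C_2^{\bA} & \coloneqq \{(x,y) \in A^2 \mid C_2(x_1,y_1) \wedge C_2(x_1,x_2) \wedge C_2(y_1,y_2)\},
\end{align*}
each of which is visibly pp-definable in $\mathfrak{D}$.

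The homomorphic equivalence would be witnessed by the same maps $g$ and $h$ as in Proposition~\ref{prop:tl-l}: set $g(0) \coloneqq (0,1)$, $g(1) \coloneqq (1,0)$, $g(2) \coloneqq (2,2)$, and define $h(x,y) \coloneqq 0$ if $C_3(x,y)$, $h(x,y) \coloneqq 1$ if $C_3(y,x)$, and $h(x,y) \coloneqq 2$ if $x=y$. Since $C_3^{\bA}$ and $T^{\bA}$ are defined exactly as in Proposition~\ref{prop:tl-l}, the verifications that $g$ and $h$ preserve $C_3$ and $T$ carry over verbatim, so the only genuinely new content concerns $C_2$.

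For $g$, the three $C_2$-conjuncts in $C_2^{\bA}$ are verified by direct inspection on $g(0) = (0,1)$ and $g(1) = (1,0)$. For $h$, the ``guard'' conjuncts $C_2(x_1,x_2)$ and $C_2(y_1,y_2)$ force $(x_1,x_2),(y_1,y_2) \in \{(0,1),(1,0)\}$, and then $C_2(x_1,y_1)$ further restricts $(x,y)$ to $\{((0,1),(1,0)),((1,0),(0,1))\}$; on both points one has $h(x),h(y) \in \{0,1\}$ with $(h(x),h(y)) \in C_2$. The only subtle point is really the necessity of these guard conjuncts, which play the same role as the auxiliary atoms $L_2(x_i,x_i,1)$ in the proof of Proposition~\ref{prop:tl-l}: without them the pair $((0,0),(1,1))$ would lie in $C_2^{\bA}$ but be sent by $h$ to $(2,2) \notin C_2$, breaking the homomorphism property. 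Once this is noted, the remainder of the proof is a direct transcription of Proposition~\ref{prop:tl-l}.
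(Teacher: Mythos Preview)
Your proof is correct and essentially identical to the paper's: the same two-dimensional pp-power, the same maps $g$ and $h$, the same appeal to Proposition~\ref{prop:tl-l} for $C_3$ and $T$, and the same verification for $C_2$. The only cosmetic difference is that the paper writes the third conjunct of $C_2^{\bA}$ as $C_2(x_2,y_2)$ rather than your $C_2(y_1,y_2)$, but since the first two conjuncts already force $x_2=y_1$, both definitions cut out the same two-element relation $\{((0,1),(1,0)),((1,0),(0,1))\}$.
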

\begin{proof}
Since $\mathfrak{TD}$ is an expansion of $\mathfrak{D}$, it suffices to prove that $\mathfrak{D}$ pp-constructs $\mathfrak{TD}$. 
Let $\bA$ be the pp-power of $\mathfrak{D}$ with domain $\{0,1,2\}^2$ and the same signature as $\mathfrak{TD}$; the relations $C_3^{\mathfrak A}$ and $T^{\mathfrak A}$ are defined as in the proof of Proposition~\ref{prop:tl-l},  and 
\begin{align*}
C_2^{\mathfrak A}  \coloneqq \; & \{(x,y) \in A^2 \mid C_2(x_1,x_2) \wedge C_2(x_1,y_1) \wedge C_2(x_2,y_2) \} \\
 = \; & \{((0,1),(1,0)),((1,0),(0,1))\}.
\end{align*}
Let $g \colon \{0,1,2\} \to \{0,1,2\}^2$ and 
$h \colon \{0,1,2\}^2 \to  \{0,1,2\}$ be as in the proof of Proposition~\ref{prop:tl-l}. To verify that $h$ is a homomorphism from $\bA$ to 
$\mathfrak{TD}$, it suffices
to prove that $h$ maps tuples in $C_2^{\mathfrak A}$ to tuples in $C_2$, which is straightforward since $(0,1,1,0) \in C_2^{\bA}$ is mapped to 
$(0,1) \in C_2$ and 
$(1,0,0,1) \in C_2^{\bA}$ is mapped
to $(1,0) \in C_2$. Conversely, $g$ is a homomorphism from $\mathfrak{TD}$ to 
$\bA$. We have already shown that $g$ preserves $C_3$ and $T$ (see the proof of Proposition~\ref{prop:tl-l}). Finally, $(g(0),g(1)) = ((0,1),(1,0)) \in C_2^{\bA}$ and $(g(1),g(0)) = ((1,0),(0,1)) \in C_2^{\bA}$. 
\end{proof}

We conclude this section with a description of the clones of self-dual operations that collapse with the clone $\mathbf{C}_3$ of all self-dual operations. Define 
\begin{align*}
\bC_3 \coloneqq \; & (\{0,1,2\};C_3),  \\
\bC_3^{0} \coloneqq \; & (\{0,1,2\};C_3,\{0\}), 
\\\bC_3^{0,1} \coloneqq \; & (\{0,1,2\};C_3,\{0,1\}),\text{ and } \\
\bT \coloneqq \; & (\{0,1,2\};C_3,T) .
\end{align*}

\begin{proposition}
The structures $\bC_3^{0}$, $\bC_3$, and $\bT$ pp-construct each other. 
\end{proposition}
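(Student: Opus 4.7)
The plan is to set up a triangle of pp-constructions, most of which are either trivial or follow from earlier material, so that only one genuine construction needs to be carried out. Both $\bC_3^{0}$ and $\bT$ have $\bC_3$ as a reduct, so $\bC_3$ is pp-definable (and in particular pp-constructible) in each of them. By transitivity of pp-constructability, it then suffices to show that $\bC_3$ pp-constructs $\bC_3^{0}$ and that $\bC_3$ pp-constructs $\bT$.

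For $\bC_3 \leadsto \bC_3^{0}$, I would invoke the observation recorded in Section~\ref{sect:constants}: every expansion of $(\{0,1,2\}; C_3)$ is a core, and every finite core structure pp-constructs its expansion by all constant unary singletons. Since $\bC_3$ itself is a core (its endomorphisms are exactly the three cyclic permutations), this immediately gives a pp-construction of $\bC_3^{0}$ in $\bC_3$.

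For $\bC_3 \leadsto \bT$, I would copy the strategy used in the proof of Proposition~\ref{prop:tl-l}. Let $\bA$ be the 2-dimensional pp-power of $\bC_3$ with domain $\{0,1,2\}^2$ and signature $\{C_3, T\}$ with
\begin{align*}
C_3^{\bA} & \coloneqq \{(x,y) \in A^2 \mid C_3(x_1,y_1) \wedge C_3(y_2,x_2)\},\\
T^{\bA} & \coloneqq \{(x,y) \in A^2 \mid x_1 = y_2 \wedge x_2 = y_1\};
\end{align*}
both relations are pp-definable over $\bC_3$ since they use only $C_3$ and equality. Then I would show that $\bA$ is homomorphically equivalent to $\bT$ via exactly the maps from the proof of Proposition~\ref{prop:tl-l}: the map $g \colon \{0,1,2\} \to \{0,1,2\}^2$ given by $g(0) = (0,1)$, $g(1) = (1,0)$, $g(2) = (2,2)$, and the map $h \colon \{0,1,2\}^2 \to \{0,1,2\}$ defined by $h(x,y) = 0$ if $C_3(x,y)$, $h(x,y) = 1$ if $C_3(y,x)$, and $h(x,y) = 2$ if $x = y$.

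The verifications that $g$ and $h$ preserve $C_3$ were already established in the proof of Proposition~\ref{prop:tl-l}, and the argument for $T$ is essentially the same there; nothing new is required. The only minor check is that preservation of $T$ by $h$ splits into the three cases $h(x_1,x_2) \in \{0,1,2\}$: using $x_1 = y_2$ and $x_2 = y_1$ one reads off that $h(y_1,y_2)$ takes the symmetric value in each case, matching the three tuples of $T$. There is no serious obstacle, the entire proof is bookkeeping built on the same pp-power and the same homomorphisms $g,h$ used previously.
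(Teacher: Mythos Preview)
Your proof is correct and follows essentially the same approach as the paper. The only organizational difference is that the paper uses $\bC_3^{0}$ as the hub (observing that $\{0\}$ is pp-definable in $\bT$ via $T(x,x)$ and $C_3$, and then showing that $\bC_3^{0}$ pp-constructs $\bT$), whereas you use $\bC_3$ as the hub; in both cases the nontrivial step is the same $2$-dimensional pp-power with the maps $g,h$ from Proposition~\ref{prop:tl-l}.
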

\begin{proof}
It follows from Corollary~\ref{cor:idempotentReduct} and Theorem~\ref{thm:wonderland} that $\bC_3^{0}$ and $\bC_3$ pp-construct each other. 
Since every relation of $\bC_3^{0}$ has a pp-definition in $\bT$,
it suffices to show that $\bC_3^{0}$ pp-constructs $\bT$.
This can be shown as in Proposition~\ref{prop:tl-l}.
\end{proof}

The next proposition requires a little more work.

\begin{proposition}\label{prop:conservativeCollapse}
The structure $\struct{C}_3$ pp-constructs $\struct{C}_3^{0,1}$.
\end{proposition}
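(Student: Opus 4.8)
The plan is to exhibit a four-dimensional primitive positive power $\bA$ of $\struct{C}_3^{0}$ that is homomorphically equivalent to $\struct{C}_3^{0,1}$. Since $\struct{C}_3$ pp-constructs $\struct{C}_3^{0}$ (Corollary~\ref{cor:idempotentReduct} together with Theorem~\ref{thm:wonderland}) and pp-constructability is transitive, this will suffice. Identify $\{0,1,2\}$ with $\mathbb{Z}_3$ and consider the permutation $\sigma$ of $\mathbb{Z}_3^4$ given by $\sigma(x_1,x_2,x_3,x_4) \coloneqq (x_2,x_3,x_1,x_4 +_3 1)$; it satisfies $\sigma^3 = \mathrm{id}$ and has no fixed point. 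Let $\bA$ be the pp-power of $\struct{C}_3^{0}$ with domain $\mathbb{Z}_3^4$, the binary relation $C_3^{\bA} \coloneqq \{(v,\sigma v) \mid v \in \mathbb{Z}_3^4\}$, and the unary relation $U^{\bA} \coloneqq \{x \in \mathbb{Z}_3^4 \mid x_2 = 0 \text{ and } x_3 = x_4\}$; both are pp-definable over $\struct{C}_3^{0}$ (in fact $C_3^{\bA}$ is pp-definable already over $\struct{C}_3$, being a conjunction of atomic formulas over $\struct{C}_3$, while $U^{\bA}$ uses the constant $0$).

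First I would verify that the map $g$ with $g(0) \coloneqq (1,0,0,0)$, $g(1) \coloneqq (0,0,1,1)$, $g(2) \coloneqq (0,1,0,2)$ is a homomorphism from $\struct{C}_3^{0,1}$ to $\bA$: one checks $\sigma(g(i)) = g(i +_3 1)$, so $g$ preserves $C_3$, and $g(0),g(1) \in U^{\bA}$, so $g$ preserves $\{0,1\}$. The heart of the argument — and the step I expect to be the main obstacle — is to produce a homomorphism $h$ from $\bA$ to $\struct{C}_3^{0,1}$. Since $\sigma$ is fixed-point-free of order $3$, every $\sigma$-orbit has exactly three elements, and a map $h$ with $h(\sigma v) = h(v) +_3 1$ for all $v$ amounts to choosing, orbit by orbit, which of the three elements is sent to $2$. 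The key point is that no $\sigma$-orbit lies inside $U^{\bA}$: requiring $v,\sigma v,\sigma^2 v \in U^{\bA}$ forces $v_2 = v_3 = 0$, $v_3 = v_4$ and $v_2 = v_4 +_3 2$, which is contradictory. Hence on each orbit one may pick an element outside $U^{\bA}$, send it to $2$, and send the other two to $0$ and $1$; the resulting $h$ then satisfies $h(\sigma v) = h(v) +_3 1$ everywhere, so it preserves $C_3$, and it maps $U^{\bA}$ into $\{0,1\}$, so it preserves $\{0,1\}$. On the orbit $\{g(0),g(1),g(2)\}$ the unique element outside $U^{\bA}$ is $g(2)$, so this recipe yields $h(g(2)) = 2$, $h(g(0)) = 0$, $h(g(1)) = 1$; in particular $h \circ g$ is the identity on $\{0,1,2\}$.

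Thus $\bA$ and $\struct{C}_3^{0,1}$ are homomorphically equivalent, so $\struct{C}_3^{0,1}$ has a pp-construction in $\struct{C}_3^{0}$, and hence in $\struct{C}_3$. I would add a remark explaining why dimension $4$ is natural: every relation pp-definable over $\struct{C}_3$ is affine of a restricted ``equality-up-to-shift, plus constants'' type, and in any pp-power of smaller dimension the vertex set of every directed $3$-cycle turns out to be collinear; since over $\mathbb{Z}_3$ an affine set meets an affine line in $0$, $1$, or $3$ points — never in exactly $2$ — no affine $U^{\bA}$ could then contain precisely the two vertices $g(0),g(1)$ of a directed $3$-cycle while omitting the third. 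The twist built into $\sigma$ (a $3$-cycle on three coordinates together with a shift on the fourth, the shift being what removes fixed points) is exactly what creates non-collinear directed $3$-cycles, and the single constant $0$ in $U^{\bA}$ is what lets $U^{\bA}$ pick out two of the three vertices of such a cycle.
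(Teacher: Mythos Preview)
Your proof is correct and uses the same overall scheme as the paper: a four-dimensional pp-power whose $C_3$ relation is the graph of a fixed-point-free order-$3$ affine permutation of $\mathbb{Z}_3^4$, together with a unary relation picking out two points from the orbit $\{g(0),g(1),g(2)\}$. The paper's pp-power uses the permutation $(x_0,x_1,x_2,x_3)\mapsto(x_0+_3 1,x_3,x_1,x_2)$ and the set $\{x\mid x_0=x_1\wedge x_3=1\}$, so up to relabelling coordinates it is essentially your construction. The genuine difference is in the backward homomorphism: the paper writes down an explicit $h$ by a three-case formula $h(x_0,x_1,x_2,x_3)\in\{x_0-1,x_0,x_0+1\}$ depending on where $(x_1,x_2,x_3)$ lies, and then verifies by case analysis that it works. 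Your argument replaces all of that with the single observation that no $\sigma$-orbit is contained in $U^{\bA}$, from which the existence of $h$ is immediate; this is shorter and makes the mechanism transparent. Your informal remark on why dimension four is needed is a pleasant addition that the paper does not offer.
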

\begin{proof}
We can use the constants $0$, $1$, and $2$ in our pp-formulas, see Section~\ref{sect:constants}. Analogously to the proof of Theorem~\ref{thm:collapse1}, we define a fourth pp-power $\struct{A}$ of $\struct{C}_3$: we consider $\struct{A}\coloneqq (\{0,1,2\}^4;C_3^{\struct{A}},R_{\{0,1\}}^{\struct{A}})$ where $C_3^{\struct{A}}$ is defined by the same primitive positive formula that defines $C_3^{\struct{P}'}$ in Theorem~\ref{thm:collapse1} and $R_{\{0,1\}}^{\struct{A}}$ is defined as follows:
\begin{align*}
    R_{\{0,1\}}^{\struct{A}} \coloneqq \big \{(x_0,x_1,x_2,x_3) \in \{0,1,2\}^4 \mid (x_0 = x_1) \wedge (x_3 = 1) \big \}.
\end{align*}
We define $g\colon \struct{C}_3^{0,1}\to \struct{A}$ as follows:
\begin{align*}
    g(0)\coloneqq (0,0,1,1); &&
    g(1)\coloneqq (1,1,0,1); &&
    g(2)\coloneqq (2,1,1,0).
\end{align*}
It is immediate to check that $g$ is a homomorphism.

Let
\begin{align*}
A_1 & \coloneqq\{(1,1,2),(1,2,1),(2,1,1)\} \\
\text{ and } A_2 & \coloneqq\{(0,1,1),(1,0,1),(1,1,0),(1,1,1)\}
\end{align*}
and define 
$h\colon \{0,1,2\}^4\to \{0,1,2\}$ as follows:
\begin{align*}
h(x_0,x_1,x_2,x_3) \coloneqq \begin{cases}
x_0 - 1 & \text{if }  (x_1,x_2,x_3) \in A_1 \\
x_0 & \text{if }  (x_1,x_2,x_3) \in A_2 \\
x_0 +1 & \text{if }  (x_1,x_2,x_3) \in A_3\coloneqq\{0,1,2\}^3\setminus(A_1\cup A_2).
\end{cases}
\end{align*}
\medskip  \noindent
\textbf{Claim.} $h$ is a homomorphism from $\struct{A}$ to $\struct{C}_3^{0,1}$.
\begin{itemize}
\item Let $(a,b)\in C_3^{\struct{A}}$. From the definition of $h$ it follows that if $(a_1,a_2,a_3)\in A_i$, for some $i\in\{1,2,3\}$, then $(b_1,b_2,b_3)\in A_i$. Therefore, 
    \begin{align*}
        \big(h(a_0,a_1,a_2,a_3),h(b_0,b_1,b_2,b_3)\big)\in \{(a_0-1,b_0-1),(a_0,b_0),(a_0+1,b_0+1)\}.
    \end{align*}
Since $(a_0,b_0)\in C_3$, it follows that $\big(h(a_0,a_1,a_2,a_3),h(b_0,b_1,b_2,b_3)\big)\in C_3$.
\item Let $(a_0,a_1,a_2,a_3)\in R_{\{0,1\}}$. Then $a_0=a_1$ and $a_3 = 1$. 
We distinguish three cases. If $a_0=0$, we have:
\begin{align*}
h(0,0,a_2,1) \coloneqq \begin{cases}
1 & \text{if } a_2 = 0,\\
0 & \text{if } a_2 = 1,\\
1 & \text{if } a_2 = 2.
\end{cases}
\end{align*}
If $a_0=1$:
\begin{align*}
h(1,1,a_2,1) \coloneqq \begin{cases}
1 & \text{if } a_2 = 0,\\
1 & \text{if } a_2 = 1,\\
0 & \text{if } a_2 = 2.
\end{cases}
\end{align*}
If $a_0=2$:
\begin{align*}
h(2,2,a_2,1) \coloneqq \begin{cases}
0 & \text{if } a_2 = 0,\\
1 & \text{if } a_2 = 1,\\
0 & \text{if } a_2 = 2.
\end{cases}
\end{align*}
Therefore, we can conclude that $h(a_0,a_1,a_2,a_3)\in \{0,1\}$.\qedhere
\end{itemize}
\end{proof}

Following our usual conventions, we define $\mathbf{C}^{0,1}_3\coloneqq \Pol(\struct{C}_3^{0,1})$ and $\mathbf{T}\coloneqq \Pol(\bT)$.

\begin{corollary}
The clones $\mathbf{C}_3$, $\mathbf{C}^0_3$, $\mathbf{C}^{0,1}_3$, and $\mathbf{T}$ are minor equivalent.
\end{corollary}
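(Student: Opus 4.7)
The plan is to chain together the results already established in the section. By the preceding proposition, the three structures $\struct{C}_3^{0}$, $\struct{C}_3$, and $\struct{T}$ pp-construct each other, so Theorem~\ref{thm:wonderland} immediately yields that $\mathbf{C}_3$, $\mathbf{C}^0_3$, and $\mathbf{T}$ lie in the same minor equivalence class. This handles three of the four clones without additional work.

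It remains to place $\mathbf{C}^{0,1}_3$ in the same class. For one direction, Proposition~\ref{prop:conservativeCollapse} provides a pp-construction of $\struct{C}_3^{0,1}$ in $\struct{C}_3$, which by Theorem~\ref{thm:wonderland} gives a minor-preserving map from $\mathbf{C}_3$ to $\mathbf{C}^{0,1}_3$. For the reverse direction I would simply observe that $\struct{C}_3^{0,1}$ is an expansion of $\struct{C}_3$, so every polymorphism of $\struct{C}_3^{0,1}$ preserves $C_3$ and hence $\mathbf{C}^{0,1}_3 \subseteq \mathbf{C}_3$. The inclusion map is itself a minor-preserving map (it trivially preserves arities and minor formation), witnessing $\mathbf{C}^{0,1}_3 \leq_{\operatorname{minor}} \mathbf{C}_3$.

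Composing these, we obtain minor-preserving maps in both directions between $\mathbf{C}^{0,1}_3$ and $\mathbf{C}_3$, which combined with the first paragraph shows that all four clones are pairwise minor equivalent. No new computation is needed; the content has been done in Proposition~\ref{prop:conservativeCollapse} and the previous proposition, with Corollary~\ref{cor:idempotentReduct} serving as a sanity check for the $\mathbf{C}_3$--$\mathbf{C}^0_3$ equivalence. Since there is no real obstacle — the argument is a short bookkeeping step — the only thing to be careful about is explicitly invoking inclusion as a minor-preserving map for the ``easy'' direction $\mathbf{C}^{0,1}_3 \leq_{\operatorname{minor}} \mathbf{C}_3$, rather than trying to produce a pp-construction of $\struct{C}_3$ in $\struct{C}_3^{0,1}$.
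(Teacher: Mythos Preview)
Your proposal is correct and matches the paper's intended argument: the corollary is stated without proof precisely because it follows immediately from the two preceding propositions together with Theorem~\ref{thm:wonderland}, with the easy direction for $\mathbf{C}^{0,1}_3$ coming from the inclusion $\mathbf{C}^{0,1}_3 \subseteq \mathbf{C}_3$ (equivalently, from $\struct{C}_3$ being a reduct of $\struct{C}_3^{0,1}$).
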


\section{Separations}
\label{sect:separation}
Here we prove that any two clones of self-dual operations whose minor-equivalence was not proved in the previous section are in fact not minor equivalent; 
for every pair $(\mathbf C,{\mathbf D})$ such that ${\mathbf C} \not \leq_{\operatorname{minor}} {\mathbf D}$   
we present a minor condition that holds in ${\mathbf C}$ but not in ${\mathbf D}$. 
Similarly as in Section~\ref{sect:collapse}, we proceed bottom-up in the lattice of clones  of self-dual operations (Figure~\ref{fig:Dima}). We first recall important minor conditions that we then use in later sections. 

\subsection{Some Important Minor Conditions}
\label{sect:mnor}
Unlike the guarded 3-cyclic condition from~(\ref{eq:guarded3cyclic}), the following minor conditions are well-known and omnipresent in universal algebra.  
The minor condition 
\begin{equation*}
    f(x, y)\approx f(y,x)
\end{equation*}
is called \emph{$2$-cyclic condition} and denoted by $\Sigma_2$. The minor condition  
\begin{equation*}
f(x,y,y) \approx f(y,y,x) \approx f(x,x,x)
\end{equation*}
is called \emph{quasi Mal'cev condition} and the minor condition 
\begin{equation*}
f(x,y,y) \approx f(y,x,y) \approx f(y,y,x) \approx f(x,x,x)
\end{equation*}
is called \emph{quasi minority condition}.
 Let $f$ be a $k$-ary function symbol. The minor condition 
\begin{equation*}
f(x,\dots,x,y) \approx f(x,\dots,y,x) \approx \dots \approx f(y,x,\dots,x) 
\end{equation*}
is called \emph{weak near-unanimity condition} $(\operatorname{WNU}(k))$. 
If we add to  $(\operatorname{WNU}(k))$ the minor identity 
$f(x,\dots,x,y) \approx f(x,\dots,x)$ then
the resulting condition is called \emph{quasi near-unanimity condition} $(\operatorname{QNU}(k))$. The $\operatorname{QNU}(3)$ condition is also called \emph{quasi majority condition}.
A $k$-ary operation $f$ is a \emph{quasi near-unanimity operation} if it satisfies the quasi near-unanimity condition $\operatorname{QNU}(k)$; we adopt an analogous convention for all other conditions introduced so far. A \emph{Mal'cev operation} is an idempotent quasi Mal'cev operation; we adopt a similar convention in defining a \emph{minority operation} and a \emph{majority operation}.

Now we define minor conditions with more than one operation symbol. 

\begin{definition}
Let $n \geq 2$. 
The minor condition 
\begin{align*}
t_0(x,y,z)& \approx t_0(x,x,x) \\ 
t_n(x,y,z)& \approx t_n(z,z,z) \\
t_i(x,y,x)& \approx t_i(x,x,x)& \mbox{ for } i \in \{0,\dots,n\} \\ 
t_i(x,x,z)& \approx t_{i+1}(x,x,z)& \mbox{for even } i \in \{0,\dots,n\} \\ 
t_i(x,z,z)& \approx t_{i+1}(x,z,z)& 
\mbox{for odd } i \in \{0,\dots,n\} 
\end{align*}
is called \emph{quasi J\'onsson condition of length $n$}, $\operatorname{QJ}(n)$.
\end{definition}

\begin{definition}
Let $n \geq 2$. 
The minor condition 
\begin{align*}
	p_0(x,y,z)& \approx p_0(x,x,x)
	\\ p_n(x,y,z)& \approx p_n(z,z,z), \text{ and }
	\\ p_i(x,x,y)& \approx p_{i+1}(x,y,y) \mbox{\ \ for every\ \ } i \in \{0,\dots, n-1\}
\end{align*}
is called \emph{quasi Hagemann-Mitschke condition of length $n$}, $\QHM(n)$.
\end{definition}

\subsection{The Atoms} 
\label{sect:atoms}
We show that there are four smallest clones of self-dual operations that, again with respect to $\leq_{\operatorname{minor}}$, are larger than $\Pol(\mathfrak{K}_3)$ (see Figure~\ref{fig:pict}). 

Let us define the following clones:
\begin{align*}
    \mathbf{L}_3 & \coloneqq \Pol(\{0,1,2\}; C_3,L_3)
    && \text{(see (\ref{def:RelationL3}))}\\ 
    \mathbf{TL}_2 & \coloneqq \Pol(\{0,1,2\}; C_3,T,L_2) 
    && \text{(see (\ref{def:T}) and (\ref{def:L}))}
    \\\mathbf{TN} & \coloneqq \Pol(\{0,1,2\};C_3,T,N,N^*) && \text{(see \eqref{eq:N} and \eqref{eq:dual})}
    \\\mathbf{W} & \coloneqq \Pol(\{0,1,2\}; C_3,R^=_3)&& \text{(see (\ref{def:relatioR3}))}. 
\end{align*}
The minority operation that returns $x$ whenever $|\{x,y,z\}|=3$ is denoted by 
`$\plus$'. 
The binary operation $\oplus$ 
is defined to be $(x,y) \mapsto 2(x+y) \mod 3$. 
It is known that $\mathbf{TL}_2 =  [\plus]$
and that $\mathbf{L}_3 =  [\oplus]$ (see~\cite{Zhuk15}, Theorem~28). 

\begin{figure}
\centering
\begin{tabular}{r|llll}
& ${\mathbf L}_3 \not \models$ & $\mathbf{TL}_2 \not \models$ & $\mathbf{TN}  \not \models$ & ${\mathbf W}  \not \models$ \\
\hline
${\mathbf L}_3 \models $ & & $\Sigma_2$ & $\Sigma_2$ & \text{Mal'cev} \\
$\mathbf{TL}_2 \models $ & \text{minority} & &  \text{minority} & \text{minority} \\
$\mathbf{TN} \models$ & $\text{majority}$ & $\text{majority}$ & & $\text{majority}$ \\
${\mathbf W} \models$ & $\WNU(3)$ & $\Sigma_2$ & $\Sigma_2$ & 
\end{tabular}
\caption{The minor conditions that show that the clones $\mathbf{L}_3$, $\mathbf{TL}_3$, $\mathbf{TN}$, and $\mathbf{W}$ are pairwise incomparable.}
\label{fig:sep}
\end{figure}

\begin{proposition}\label{Prop:Atoms}
The clones $\mathbf{L}_3$, $\mathbf{TL}_2$, $\mathbf{TN}$, and $\mathbf{W}$ are pairwise incomparable with respect to ${\leq}_{\operatorname{minor}}$. 
\end{proposition}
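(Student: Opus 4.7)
The plan is to use the standard correspondence: a minor-preserving map preserves minor conditions, so if some minor condition holds in $\mathbf{C}$ but fails in $\mathbf{D}$, then $\mathbf{C}\not\leq_{\operatorname{minor}}\mathbf{D}$. Figure~\ref{fig:sep} records, for each of the twelve ordered pairs of distinct clones among $\mathbf{L}_3,\mathbf{TL}_2,\mathbf{TN},\mathbf{W}$, a minor condition claimed to distinguish them, and the proof reduces to verifying each entry.

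For the positive side I would exhibit an explicit witness in each row clone. In $\mathbf{L}_3$, the operation $\oplus$ is commutative, and the composition $\oplus(\oplus(x,z),y)$ equals $x-y+z \pmod{3}$ and is Mal'cev. In $\mathbf{TL}_2=[\plus]$ and $\mathbf{TN}=[m]$ the generators themselves are a minority and a majority, respectively. In $\mathbf{W}$, $\vee_3$ is commutative, and a ternary weak near-unanimity operation in $\mathbf{W}$ can be built from $\vee_3$ using the standard construction of a symmetric ternary term available in any Taylor clone with an idempotent commutative binary operation.

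For the negative side three groups of arguments are required. The failures of $\Sigma_2$ in columns $\mathbf{TL}_2$ and $\mathbf{TN}$ can be shown by proving that a self-dual commutative binary operation $f$ is uniquely determined: commutativity together with preservation of $T$ forces $f(0,1)=f(1,0)=2$, self-duality extends $f$ to all of $\{0,1,2\}^2$, and the resulting unique candidate fails to preserve $L_2$ (ruling it out of $\mathbf{TL}_2$) and fails to preserve $N$ (ruling it out of $\mathbf{TN}$). The failures of minority, majority, and $\WNU(3)$ in $\mathbf{L}_3$ follow from the fact that every operation preserving $L_3$ is affine over $\mathbb{Z}_3$, and every self-dual such operation $\sum a_i x_i + c$ satisfies $\sum a_i\equiv 1 \pmod{3}$; each of the three symmetrisation identities forces $a_1=a_2=a_3$, and hence $3a\equiv 1 \pmod{3}$, which is impossible. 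The remaining negative entries, namely that $\mathbf{TN}$ and $\mathbf{W}$ contain no minority and that $\mathbf{TL}_2$ and $\mathbf{W}$ contain no majority, can be dispatched by short direct inspections of the characteristic relations combined with the forced identities.

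The entry I expect to require the most care is the failure of the Mal'cev identity in $\mathbf{W}$. The idea is to apply a hypothetical Mal'cev $f\in\mathbf{W}$ componentwise to the three tuples $(1,a_1,a_2),(1,b_1,b_2),(0,c,c)\in R^=_3$ for arbitrary $a_1,a_2,b_1,b_2,c\in\{0,1,2\}$. The Mal'cev identities pin the first coordinate of the output to $f(1,1,0)=0$, and preservation of $R^=_3$ then forces $f(a_1,b_1,c)=f(a_2,b_2,c)$ for all such choices. Specialising to $a_1=b_1=c=0$ and $a_2=1$ yields $f(0,0,0)=f(1,0,0)$, which contradicts the Mal'cev identity since it gives $0$ and $1$ respectively. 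This is the key conceptual step; the other entries reduce to analogous but more mechanical inspections.
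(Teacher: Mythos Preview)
Your overall strategy matches the paper's exactly: verify each cell of the separating table in Figure~\ref{fig:sep} by exhibiting a witness in the row clone and showing the condition fails in the column clone. A few of your individual verifications differ from the paper's, and one deserves tightening.

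For the $\Sigma_2$ failures in $\mathbf{TL}_2$ and $\mathbf{TN}$, the paper argues more directly: both clones preserve $\{0,1\}$, and any commutative $f$ with $f(\{0,1\}^2)\subseteq\{0,1\}$ yields $f(0,1)=f(1,0)=a\in\{0,1\}$ with $(a,a)\notin T$. Your route via uniqueness of the $T$-preserving commutative self-dual operation is correct but longer. For the three failures in $\mathbf{L}_3$, your use of the affine description of $\Pol(L_3)$ together with the self-duality constraint $\sum a_i\equiv 1$ is cleaner and more uniform than the paper's three separate matrix computations; this is a genuine improvement. Your Mal'cev-in-$\mathbf{W}$ argument is essentially the paper's, just with more general tuples before specialising.

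The one place that is not yet a proof is your $\WNU(3)$ witness in $\mathbf{W}$. There is no ``standard construction of a symmetric ternary term'' from an arbitrary commutative idempotent binary operation; $\vee_3$ is not associative on $\{0,1,2\}$, so the usual semilattice term does not obviously work. The paper uses $w(x,y,z)\coloneqq x\vee_3(y\vee_3 z)$ and checks the $\WNU(3)$ identities by observing that $\vee_3$ restricted to any two-element subset is a semilattice operation, which is exactly what the identities require. You should name the term and give this one-line justification rather than appeal to a nonexistent general fact.
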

\begin{proof}
We use the minor conditions as specified in Figure~\ref{fig:sep}. 
We claim that $\Sigma_2$ holds in $\mathbf{L}_3$ and in ${\mathbf W}$ but not in $\mathbf{TL}_2$ and $\mathbf{TN}$.
Clearly, the operation $\oplus$ in ${\mathbf L}_3$  and the operation $\vee_3$ in ${\mathbf W}$ are 2-cyclic. 
Every 2-cyclic operation that preserves $\{0,1\}$ does not preserve the relation $T$, because $f(0,1)=f(1,0)=a\in\{0,1\}$ and $(a,a)\notin T$. Note that all operations in $\mathbf{TL}_2$ or in $\mathbf{TN}$ preserve $\{0,1\}$.

It is easy to check that the operation $(x,y,z) \mapsto x \oplus (y \oplus z) \in\mathbf{L}_3$ 
is a quasi Mal'cev operation. But any Mal'cev operation $f$
does not preserve the relation $R^=_3$
because 
\begin{align*}
\begin{pmatrix} 
f(0,1,1)\\
f(1,1,1)\\
f(1,1,0)
\end{pmatrix} = 
\begin{pmatrix} 
0\\
1\\
0
\end{pmatrix}\notin R^=_3.
\end{align*} 
which shows that ${\mathbf W}$ does not satisfy the quasi Mal'cev condition.

Note that $\mathbf{TL}_2=[\operatorname{plus}]$ satisfies the quasi minority condition but $\mathbf{L}_3$, $\mathbf{TN}$, and ${\mathbf W}$ do not. To see this, let $f$ be a quasi minority operation $f$. Then $f$ does not preserve $L_3$ because  
\begin{align*}
\begin{pmatrix} 
f(0,0,1)\\
f(0,1,0)\\
f(0,2,2)
\end{pmatrix} = 
\begin{pmatrix} 
1\\
1\\
0
\end{pmatrix}\notin L_3. 
\end{align*} 
Moreover, $f$ does not preserve $N$ because 
\[\begin{pmatrix} 
f(0,1,1)\\
f(0,0,2)
\end{pmatrix} = 
\begin{pmatrix} 
0\\
2
\end{pmatrix}\notin N.\]
Finally, we have already seen that ${\mathbf W}$ does not have a quasi minority operation because every quasi minority operation is a particular quasi Mal'cev operation. 

The clone $\mathbf{TN} = [m]$ (see (\ref{def:m}))  satisfies $\operatorname{QNU}(3)$ since $m$ is a quasi majority operation. However, any quasi majority operation $f$ does not preserve $L_3$ as
\begin{align*}
\begin{pmatrix} 
f(0,0,1)\\
f(0,1,0)\\
f(0,2,2)
\end{pmatrix} = 
\begin{pmatrix} 
0\\
0\\
2
\end{pmatrix}\notin L_3,
\end{align*}
does not preserve $L_2$ as
\[\begin{pmatrix}
f(0,0,1)
\\f(0,1,1)
\\f(0,1,0)
\end{pmatrix}=
\begin{pmatrix}
0
\\1
\\0
\end{pmatrix}\notin L_2,\]
and does not preserve $R^=_3$ as
\begin{align*}
\begin{pmatrix}
f(0,0,1)
\\f(0,1,1)
\\f(0,1,0)
\end{pmatrix} & =\begin{pmatrix}
0
\\1
\\0
\end{pmatrix}\notin R^=_3,
\end{align*}
which shows that ${\mathbf L}_3$, $\mathbf{TL}_2$, and $\mathbf W$ do not satisfy quasi majority. 
Note that $\operatorname{WNU}(3)$ holds in $\mathbf W$ since $\mathbf W$ contains $(x,y,z) \mapsto x \vee_3 (y \vee_3 z)$.
Suppose that there exists a ternary weak near unanimity operation $w \in \mathbf{L}_3$. Then 
\begin{align*}
\begin{pmatrix} 
w(0,1,1)\\
w(1,0,1)\\
w(2,2,1)
\end{pmatrix} = 
\begin{pmatrix} 
a \\
a \\
b 
\end{pmatrix}.
\end{align*} 
Note that $(a,a,b) \in L_3$ implies that $a=b$. 
It follows that 
\begin{align*}
\begin{pmatrix} 
w(0,1,1)\\
w(1,2,2)
\end{pmatrix} = 
\begin{pmatrix} 
a \\
a 
\end{pmatrix} \notin C_3
\end{align*} 
which is a contradiction.
\end{proof}

\subsection{Separations in the Wings} 
In the lattice of clones of self-dual operations the clone
$\mathbf{W}$ is the smallest clone in the left wing. 
The clone ${\mathbf Q} \coloneqq \Pol(\mathfrak{Q})$, where ${\mathfrak Q} = (\{0,1,2\};C_3,R^=_2)$ is the structure introduced in Section~\ref{sect:pq-collapse}, 
is the unique smallest clone that properly contains
$\mathbf{W}$ and also lies in the left wing (see Figure~\ref{fig:Dima}). 

\begin{theorem}\label{thm:gsigma3}
There is no minor-preserving map from ${\mathbf Q}$ to
${\mathbf W}$. 
\end{theorem}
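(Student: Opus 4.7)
My plan is to exhibit the guarded $3$-cyclic minor condition~\eqref{eq:guarded3cyclic} as a separating minor condition: it holds in $\mathbf{Q}$ but not in $\mathbf{W}$, so by the compactness characterisation of Section~\ref{sect:minor-cond} there can be no minor-preserving map $\mathbf{Q} \to \mathbf{W}$.

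For the negative direction, suppose for contradiction that some $4$-ary $r \in \mathbf{W}$ satisfies the guarded $3$-cyclic condition. Setting $\alpha_a \coloneqq r(a,a,a,a)$, preservation of $C_3$ forces $\alpha_1 = \alpha_0 +_3 1$ and $\alpha_2 = \alpha_0 +_3 2$. If $\alpha_0 \neq 0$ then some $a \in \{0,1\}$ has $\alpha_a = 2$, and applying $r$ componentwise to four copies of $(a,0,0) \in R^=_3$ produces an image whose first coordinate is $2 \notin \{0,1\}$, violating $R^=_3$. Otherwise $\alpha_0 = 0$; then I apply $r$ to the tuples $(0,1,1), (0,2,2), (0,0,0), (1,1,2) \in R^=_3$. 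The guarded identity on the first column gives value $\alpha_0 = 0$; $3$-cyclicity rewrites the values on the second and third columns as $r(0,1,2,1)$ and $r(0,1,2,2)$ respectively; and $C_3$-preservation on the pair of $4$-tuples $(0,1,2,1), (1,2,0,2)$, which differ by $+_3 1$ coordinate-wise, combined with $3$-cyclicity yields $r(0,1,2,2) = r(0,1,2,1) +_3 1$. Hence the image has the form $(0,c,c+_3 1)$ with $c \neq c+_3 1$, contradicting $R^=_3$.

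For the positive direction, I propose the explicit $4$-ary operation
\[ r(x_1, x_2, x_3, y) \coloneqq \begin{cases} y & \text{if } \{x_1, x_2, x_3\} = \{0,1,2\}, \\ \vee_3(x_1, \vee_3(x_2, x_3)) & \text{otherwise.} \end{cases} \]
The guarded identity is immediate because the second branch ignores $y$ and is the branch that applies when $x_1 = x_2 = x_3$. For the $3$-cyclic identity: on the first branch the defining condition and the value $y$ are both invariant under cyclic permutations; on the second branch, $\vee_3$ restricted to any two-element subset of $\{0,1,2\}$ is the join of a two-element semilattice, hence commutative and associative, so $\vee_3(x_1, \vee_3(x_2, x_3))$ is symmetric in $(x_1,x_2,x_3)$ whenever $\{x_1,x_2,x_3\}$ has at most two distinct elements. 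Preservation of $C_3$ follows from the same case distinction, using that a cyclic value-shift preserves $\{0,1,2\}$ on the first branch and the self-duality of $\vee_3$ on the second.

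The delicate part, and where my initial attempts failed, is preservation of $R^=_2$. For tuples $(u_i,v_i,w_i) \in R^=_2$ with $i = 1,2,3,4$, the first column lies in $\{0,1\}^4$, so $\{u_1,u_2,u_3\} \neq \{0,1,2\}$ and the first column value is $\vee_3(u_1, \vee_3(u_2,u_3)) = u_1 \vee u_2 \vee u_3$ (Boolean~OR, since $\vee_3$ agrees with $\vee$ on $\{0,1\}$); this lies in $\{0,1\}$ and is independent of $u_4$. If it equals $0$ then $u_1 = u_2 = u_3 = 0$, so the $R^=_2$-constraint on the first three tuples forces $v_i = w_i \in \{0,1\}$ for $i = 1,2,3$; then the second and third columns also have their first three entries in $\{0,1\}^3$, and $r$ produces Boolean ORs of matching $\{0,1\}$-entries there, which are equal to each other and lie in $\{0,1\}$, as required for $R^=_2$. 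Hence $r \in \mathbf{Q}$ and satisfies the guarded $3$-cyclic condition, completing the separation.
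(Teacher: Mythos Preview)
Your proof is correct and follows essentially the same strategy as the paper: both use the guarded $3$-cyclic condition $g\Sigma_3$ as the separating minor condition, witnessed in $\mathbf{Q}$ by the very same operation (the paper calls it $r_4$ and cites Zhuk for membership in $\mathbf{Q}$, while you verify preservation of $C_3$ and $R^=_2$ directly), and both derive a tuple of shape $(0,c,c+_3 1)$ from $R^=_3$-columns to obtain the contradiction in $\mathbf{W}$. Your treatment is slightly more explicit in two places --- you justify $r(0,0,0,0)=0$ via the case split on $\alpha_0$, which the paper leaves implicit, and you spell out the $R^=_2$-preservation argument --- but the key ideas coincide.
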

\begin{proof}
It is known (\cite{Zhuk15}, Theorem~29) that the clone ${\mathbf Q}$ contains the operation $r_4$ defined as follows:
\begin{align*}
r_4(x,y,z,t) \coloneqq \begin{cases}
x \vee_3 y \vee_3 z & \text{ if } |\{x,y,z\}| \leq 2 \\
t & \text{ otherwise.}
\end{cases}
\end{align*}
Note that $r_4$ satisfies the minor condition, which we call \emph{guarded 3-cyclic ($g\Sigma_3$)}
\begin{align}
f(x_1,x_2,x_3,y) & \approx f(x_2,x_3,x_1,y) \label{eq:cycl} \\
\text{ and } f(x,x,x,y) & \approx f(x,x,x,x). \label{eq:guard}
\end{align}
Suppose for contradiction that these identities can be satisfied by an operation $f \in \Pol({\mathfrak W})$. Let $a \in \{0,1,2\}$ be such that $f(0,1,2,0)=a$. Since $f$ preserves $C_3$ we have $f(1,2,0,1) = a+1$. By~(\ref{eq:cycl}) we have
$f(0,1,2,1) = a+1$. But then $f$ does not preserve
$R^=_3$, because 
\begin{align*}
f \begin{pmatrix}
0 & 0 & 0 & 1 \\
0 & 1 & 2 & 0 \\
0 & 1 & 2 & 1 
\end{pmatrix}  = 
\begin{pmatrix} 
0 \\ 
a \\ 
a+1 
\end{pmatrix} 
\notin R^=_3.&\qedhere
\end{align*} 
\end{proof}

\begin{proposition}\label{prop:qj}
The minor condition $\QJ(4)$ holds in ${\mathbf M}_{\infty}$ but not in ${\mathbf Q}$. 
\end{proposition}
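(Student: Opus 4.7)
I will handle the two halves separately. For the positive direction I will exhibit explicit Jónsson witnesses. Let $t_0 := \pr^3_1$, $t_4 := \pr^3_3$, and $t_2(x,y,z) := x \vee_3 z$, all of which lie in $\mathbf{M}_\infty$ because $\vee_3 \in \mathbf{M}_\infty$. For the middle terms I will define $t_1$ and $t_3$ by prescribing their restrictions to $\{0,1\}^3$,
\[
t_1|_{\{0,1\}^3}(x,y,z) := x \vee (y \wedge z),
\qquad
t_3|_{\{0,1\}^3}(x,y,z) := z \vee (y \wedge x),
\]
and extending to $\{0,1,2\}^3$ by $C_3$-equivariance, with arbitrary cyclically consistent choices on the two orbits of permutations of $(0,1,2)$. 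Membership in $\mathbf{M}_\infty$ reduces to three checks: preservation of $C_3$ is built in, preservation of $\leq_2$ follows from Boolean monotonicity of these restrictions, and preservation of each $B_n$ holds because the Boolean value of $t_1$ is $1$ at every coordinate where its first input is $1$, and symmetrically for $t_3$ and its third input. The $\QJ(4)$-identities reduce, on $\{0,1\}$, to the distributive-lattice identities $x\vee(x\wedge z)=x$, $x\vee(z\wedge z)=x\vee z$, $z\vee(x\wedge x)=x\vee z$, $z\vee(z\wedge x)=z$, and these propagate to $\{0,1,2\}$ via cyclic equivariance since every pair $(x,z)\in\{0,1,2\}^2$ lies in some cyclic shift of $\{0,1\}^2$.

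For the negative direction, suppose for contradiction that $t_0,\dots,t_4\in\mathbf{Q}$ realise $\QJ(4)$. A short case analysis shows the identity is the only unary operation in $\mathbf{Q}$ (a non-trivial cyclic shift sends a tuple $(1,a,b)\in R^=_2$ to $(2,\ast,\ast)\notin R^=_2$), so $t_0=\pr^3_1$ and $t_4=\pr^3_3$. The chain conditions then force $t_1(x,x,z)=x$, $t_3(x,z,z)=z$, $t_i(x,y,x)=x$ for every $i$, and all the middle $t_i$ to be idempotent. Writing $g(x,z) := t_1(x,z,z) = t_2(x,z,z)$ and $h(x,z) := t_2(x,x,z) = t_3(x,x,z)$, my plan is to run a short sequence of $R^=_2$-preservation tests, each of which picks three $R^=_2$-rows, applies some $t_i$ column-wise, computes all but one column via the forced identities, and squeezes the remaining column by demanding that the output lie in $R^=_2$. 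Two such tests on $t_1$, with row-triples $\{(0,0,0),(0,1,1),(1,0,2)\}$ and $\{(0,1,1),(0,0,0),(1,2,0)\}$, force $t_1(0,1,2)=t_1(0,2,1)=0$, so that $t_1$ restricts to the first projection on permutations of $(0,1,2)$; a further test then yields $g(0,1)=0$. A parallel analysis of $t_3$ gives $h(0,1)\in\{0,1\}$ and excludes the case $h(0,1)=0$ via the rows $\{(0,0,0),(1,2,1),(0,1,1)\}$, whose $t_3$-output would be $(0,0,1)\notin R^=_2$; hence $h(0,1)=1$. A test on $t_2$ with rows $\{(0,0,0),(1,2,0),(0,1,1)\}$ then forces $t_2(0,2,1)=1$, and the final test on $t_2$ with rows $\{(0,0,0),(1,1,2),(0,1,1)\}$ produces the output $(0,g(0,1),t_2(0,2,1))=(0,0,1)\notin R^=_2$, contradicting $t_2\in\mathbf{Q}$.

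The main obstacle is coupling $g$ and $h$: the $\QJ(4)$ identities place no joint constraint on them, so separate $R^=_2$-analyses of $t_1$ and $t_3$ a priori leave too much freedom. The key observation is that the same $t_2\in\mathbf{Q}$ simultaneously realises $g$ via $t_2(x,z,z)$ and $h$ via $t_2(x,x,z)$, and a single $R^=_2$-preservation test on $t_2$ makes the values $g(0,1)=0$ and $h(0,1)=1$ collide. The positive direction is, by comparison, routine: its content is that the distributive-lattice Jónsson chain on $\{0,1\}$ can be lifted to $\{0,1,2\}$ by cyclic extension without breaking $\mathbf{M}_\infty$-membership.
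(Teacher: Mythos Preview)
Your positive half is essentially the paper's argument: the paper uses the operation $f^{\infty}_{\pi}(x,y,z)$, equal to $x \vee_3 (y \wedge_3 z)$ for $|\{x,y,z\}|\le 2$ and to $x$ otherwise, and sets $t_1=f^{\infty}_{\pi}$, $t_2(x,y,z)=f^{\infty}_{\pi}(x,z,z)=x\vee_3 z$, $t_3(x,y,z)=f^{\infty}_{\pi}(z,x,y)$. Restricted to $\{0,1\}$ these are exactly your $x\vee(y\wedge z)$, $x\vee z$, $z\vee(y\wedge x)$; your ``cyclic extension with arbitrary rainbow values'' reconstructs the same operations up to the irrelevant rainbow entries.

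Your negative half takes a genuinely different route. The paper does \emph{not} argue directly in $\mathbf{Q}$: it first passes to $\mathbf{P}$ along the minor-preserving map $\mathbf{Q}\to\mathbf{P}$, then invokes Barto's theorem that any finite relational structure whose polymorphism clone satisfies some $\QJ(n)$ also satisfies some $\QNU(m)$, and finally rules out every $\QNU(m)$ in $\mathbf{P}$ by a short induction on $m$ using $R^{\Rightarrow}_2$. Your approach is more elementary and entirely self-contained---no appeal to the deep CD$\Rightarrow$NU result---at the cost of a longer hand computation. The chain of seven $R^=_2$-tests you outline does work (I checked: $t_1(0,1,2)=t_1(0,2,1)=0$, hence $g(0,1)=0$; the parallel tests on $t_3$ yield $t_3(0,2,1)=h(0,1)$, the test $\{(0,0,0),(1,2,1),(0,1,1)\}$ then forces $h(0,1)=1$, whence $t_2(0,2,1)=1$, and the final $t_2$-test produces $(0,0,1)\notin R^=_2$). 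Two presentational points to fix: your curly-brace ``row-triples'' should be ordered tuples, since the argument depends on which tuple is fed into which slot of $t_i$ (for instance, your second $t_1$-test only yields $t_1(0,2,1)=0$ with the ordering $(0,0,0),(1,2,0),(0,1,1)$), and the phrase ``parallel analysis of $t_3$'' should be unpacked---the reader needs to see that re-running your second $t_1$-test on $t_3$ gives $t_3(0,2,1)=h(0,1)$ before the $\{(0,0,0),(1,2,1),(0,1,1)\}$ test can be read as producing output $(0,h(0,1),1)$.
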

\begin{proof}
Zhuk (\cite{Zhuk15}, Theorem~30) proved  that ${\mathbf M}_{\infty}$ contains the operation $f^{\infty}_\pi$ defined by 
\[f^{\infty}_\pi (x,y,z) \coloneqq  \begin{cases}
x \vee_3 (y \wedge_3 z) & \text{ if } |\{x,y,z\}| \leq 2 \\
x & \text{ otherwise.}
\end{cases}
\]
Note that the operations $t_0,t_1,t_2,t_3,t_4$ given by 
\begin{align*}
t_0(x,y,z) & \coloneqq f^{\infty}_\pi(x,x,x) \\
t_1(x,y,z) & \coloneqq f^{\infty}_\pi(x,y,z) \\
t_2(x,y,z) & \coloneqq f^{\infty}_\pi(x,z,z) \\
t_3(x,y,z) & \coloneqq f^{\infty}_\pi(z,x,y) \\
t_4(x,y,z) & \coloneqq f^{\infty}_\pi(z,z,z) 
\end{align*}
witness that ${\mathbf M}_{\infty}$ satisfies $\QJ(4)$; in particular, we have 
\begin{align*}
t_1(x,y,x) & = f^{\infty}_\pi(x,y,x) = x \vee_3 (y \wedge_3 x) = x \\
t_2(x,y,x) & = f^{\infty}_\pi(x,x,x) = x \\
t_3(x,y,x) & = f^{\infty}_\pi(x,x,y) = x \vee_3 (x \wedge_3 y) = x  \\
t_1(x,z,z) & = f^{\infty}_\pi(x,z,z) = f^{\infty}_{\pi}(x,z,z) = t_2(x,z,z) \\
t_2(x,x,z) & = f^{\infty}_\pi(x,z,z) = f^{\infty}_{\pi}(z,x,x) = t_3(x,x,z) . 
\end{align*}
We claim that $\mathbf Q$ does not satisfy
$\QJ(4)$. Since $\bf Q$ has a minor-preserving map to 
$\bf P$, it suffices to prove that the clone $\bf P$ does not satisfy $\QJ(4)$. 
Barto~\cite{barto-cd} proved that every finite structure
with a finite relational signature whose polymorphism clone satisfies $\QJ(n)$ for some $n$ also satisfies $\QNU(n)$ for some $n \geq 3$. 
Suppose for contradiction that
$\bf P$ contains a quasi near unanimity operation $t$ of arity $n \geq 3$. 
If $n = 3$ we have 
\begin{align*}
    \begin{pmatrix}
    t(0,1,0)
    \\t(1,0,0)
    \\t(1,1,0)
\end{pmatrix}=
 \begin{pmatrix}
    0
    \\0
    \\1
\end{pmatrix}\notin R^\Rightarrow_2.
\end{align*}
For $k > 3$, note that 
\begin{align*}
    \begin{pmatrix}
    t(0,\dots,0,1,0)
    \\t(1,\dots,1,0,0)
    \\t(1,\dots,1,1,0)
\end{pmatrix}=
 \begin{pmatrix}
    0
    \\a_1
    \\1
\end{pmatrix}.
\end{align*}
Since we assumed that $t\in\Pol(\mathfrak{P)}$, we obtain that $(0,a_1,1)\in R^\Rightarrow_2$ and so $a_1=1$. 
We repeat the same reasoning $k-4$ times: each time we consider the matrix $M_i$, for $2\leq i\leq k-3$, where the first $k-i-1$ columns are equal to $(0,1,1)$, the ($k-i$)-th column is equal to $(1,0,1)$, and the last $i$ columns are equal to $(0,0,0)$. Note that every column of $M_i$ is an element of $R^\Rightarrow_2$. By letting $t$ act  row-wise in $M_i$, we get the chain of equalities $a_2=\dots = a_{k-3}=1$. Finally we get
\begin{align*}
    \begin{pmatrix}
    t(0,1,0,\dots,0)
    \\t(1,0,0,\dots,0)
    \\t(1,1,0,\dots,0)
\end{pmatrix}=
 \begin{pmatrix}
    0
    \\0
    \\a_{k-3}
\end{pmatrix}\notin R^\Rightarrow_2,
\end{align*}
a contradiction.
\end{proof}

Recall that in Section~\ref{sect:pq-collapse} we proved
that ${\mathbf P}$ and ${\mathbf Q}$ are minor equivalent and, in Section~\ref{sect:maincollapse}, that ${\mathbf B}_{\infty}\pi_\infty$ collapses with $\mathbf M_{\infty}$. It follows that $\mathbf M_{\infty}$ is the unique smallest element that properly contains ${\mathbf Q}$ in Figure~\ref{fig:pict}.

We will now show all the clones 
${\mathbf B}_k$ and ${\mathbf M}_k$, for $k \geq 2$, are pairwise distinct. 
If $t \in A^n$, we denote by
$\Two(t)$ the set of all entries of $t$ that appear at least twice. 
For $n \geq 3$, let $f_\pi^n \colon \{0,1,2\}^{n+1} \to \{0,1,2\}$ be the operation defined as follows
\begin{align*}
    f_{\pi}^{n}(x_1,\dots,x_{n+1})& \coloneqq \begin{cases}
    x_1 & \text{ if } \Two(x_1,\dots,x_{n+1})=\{0,1,2\},
    \\a\vee_3 b & \text{ if } \Two(x_1,\dots,x_{n+1})=\{a,b\},
    \\a & \text{ if } \Two(x_1,\dots,x_{n+1})=\{a\}. 
    \end{cases}
    \end{align*}
It is known that $f_\pi^{n} \in {\mathbf B}_n$ and $f_\pi^n \in {\mathbf M}_n$ 
(\cite{Zhuk15}, Theorem~29). 

\begin{proposition}\label{prop:qnu-sep}
Let $n \geq 2$. The condition 
$\QNU(n+1)$
holds in ${\mathbf B}_n$ and ${\mathbf M}_n$, 
but not in ${\mathbf B}_{n+1}$ and ${\mathbf M}_{n+1}$. 
\end{proposition}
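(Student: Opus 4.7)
The plan is to establish the two directions of the proposition separately. For the positive direction, I would invoke the $(n+1)$-ary operation $f_\pi^n$ defined just above the proposition statement: by~(\cite{Zhuk15}, Theorem~29) it lies in both $\mathbf{B}_n$ and $\mathbf{M}_n$, and I claim it witnesses $\QNU(n+1)$. Indeed, on any tuple consisting of $n \geq 2$ copies of $x$ together with a single entry $y$, the set $\Two$ equals $\{x\}$, so by the third clause of its definition $f_\pi^n$ returns $x$; since also $f_\pi^n(x,\dots,x) = x$, each identity of $\QNU(n+1)$ reduces to $x \approx x$.

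For the negative direction, I would argue by contradiction: suppose $\mathbf{M}_{n+1}$ (resp.\ $\mathbf{B}_{n+1}$) satisfies $\QNU(n+1)$. By Section~\ref{sect:constants}, every expansion of $(\{0,1,2\};C_3)$ is a core, so both clones are minor equivalent to their idempotent reducts; since minor equivalent clones satisfy the same minor conditions, there exists an \emph{idempotent} $\QNU(n+1)$ operation $f$ in the clone. In particular $f(0,\dots,0)=0$, and combining this with the $\QNU(n+1)$ identities yields $f(0,\dots,0,1,0,\dots,0) = 0$ regardless of the position of the single $1$.

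Since $B_{n+1}$ is a relation of both $\bM_{n+1}$ and $\bB_{n+1}$, the operation $f$ preserves it. Consider the $n+1$ tuples $t_1,\dots,t_{n+1}\in\{0,1\}^{n+1}$ given by $t_i[j]=1$ iff $i=j$; each $t_i$ contains a single $1$ and hence lies in $B_{n+1}$. Applying $f$ componentwise, the $j$-th coordinate of the resulting tuple is $f$ evaluated on the vector with a single $1$ in position $j$, which equals $0$ by the previous paragraph. Hence componentwise application yields $(0,\dots,0)\notin B_{n+1}$, contradicting the preservation of $B_{n+1}$.

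The main obstacle is the reduction to idempotent operations. Without idempotence, one only concludes that the componentwise application produces a constant tuple $(c,\dots,c)$ with $c=f(0,\dots,0)\in\{0,1,2\}$, and for $c\neq 0$ this tuple still belongs to $B_{n+1}$, so no contradiction appears. The core observation of Section~\ref{sect:constants}---that the relevant structures are cores and hence the clone is minor equivalent to its idempotent reduct---is precisely what forces $c=0$ and closes the argument.
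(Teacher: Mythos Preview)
Your argument is essentially correct, with one small gap and one methodological difference from the paper.

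\textbf{Gap in the positive direction.} The operation $f_\pi^n$ is defined in the paper only for $n\ge 3$; for $n=2$ the definition would leave $f_\pi^2(x,y,z)$ unspecified when $x,y,z$ are pairwise distinct (then $\Two(x,y,z)=\emptyset$ and no clause applies). So invoking $f_\pi^n$ uniformly for all $n\ge 2$ does not cover the base case. The paper treats $n=2$ separately by observing that the majority operation $m$ from~(\ref{def:m}) lies in $\mathbf{B}_2$ and $\mathbf{M}_2$, and $m$ witnesses $\QNU(3)$. Once you add this sentence your positive direction is complete.

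\textbf{Difference in the negative direction.} The paper simply asserts that no quasi near-unanimity operation of arity $n+1$ preserves $B_{n+1}$. Strictly speaking this uses more than the $\QNU$ identities alone: one also needs $f(0,\dots,0)=0$, which follows because any $f$ in $\mathbf{B}_{n+1}$ or $\mathbf{M}_{n+1}$ preserves both $C_3$ and $\{0,1\}$ (the latter being pp-definable from $B_2$ or from $\leq_2$), and these two together force $f(0,\dots,0)=0$. Your route via Section~\ref{sect:constants}---passing to the idempotent reduct and hence to an idempotent witness---reaches the same conclusion $f(0,\dots,0)=0$ by a more structural argument. Both are valid; your version makes the reduction to idempotence explicit, while the paper's version is shorter but leaves the ``$c=0$'' step implicit.
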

\begin{proof}
If $n \geq 3$, then ${\mathbf B}_n$ and ${\mathbf M}_n$ contain the quasi near unanimity operation $f_\pi^{n}$, which is a witness for ${\mathbf B}_n\models\QNU(n+1)$ and ${\mathbf M}_n\models\QNU(n+1)$. 
If $n=2$, then $m \in {\mathbf B}_n$
and $m \in {\mathbf M}_n$ (where $m$ is defined in~(\ref{def:m})) and that $m$ is a majority operation. 
It is easy to see that every quasi near unanimity operation of arity $n \geq 3$ does not preserve $B_n$, so
$\QNU(n)$ does not hold in ${\mathbf B}_{n}$ and in ${\mathbf M}_{n}$. 
\end{proof} 

Let $f_0^{\infty} \colon \{0,1,2\}^3 \to \{0,1,2\}$ be defined by $f_0^{\infty}(x,y,x) = x \vee_3 y$, and by $f_0^{\infty}(x,y,z) = x$ otherwise. 
It is known (\cite{Zhuk15}, Theorem~29) that $f_0^{\infty} \in {\mathbf B}_n$, for every $n \geq 2$. 

\begin{proposition}\label{prop:qhm-sep}
$\QHM(3)$ holds in ${\mathbf B}_\infty$, but not in 
${\mathbf M}_2$. 
\end{proposition}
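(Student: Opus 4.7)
The two halves of this proposition are essentially independent. For the positive direction, I would exhibit an explicit $\QHM(3)$-chain inside $\mathbf{B}_\infty$ using the generator $f_0^\infty$ from Section~\ref{sect:maincollapse}. The crucial elementary identities, immediate from the case split in the definition of $f_0^\infty$, are $f_0^\infty(x,y,y) = x$, $f_0^\infty(x,x,y) = x$, and $f_0^\infty(x,y,x) = x \vee_3 y$. Motivated by these, I set $p_0(x,y,z) \coloneqq x$, $p_3(x,y,z) \coloneqq z$, $p_1(x,y,z) \coloneqq f_0^\infty(x,z,y)$, and $p_2(x,y,z) \coloneqq f_0^\infty(z,x,y)$; each of $p_1, p_2$ is a minor of $f_0^\infty$ and therefore lies in $\mathbf{B}_\infty$. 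A direct computation then gives $p_1(x,y,y) = x = p_0(x,x,y)$, $p_2(x,x,y) = y = p_3(x,y,y)$, and $p_1(x,x,y) = x \vee_3 y = p_2(x,y,y)$, so all five $\QHM(3)$ identities hold.

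For the negative direction, I would pass to the idempotent reduct of $\mathbf{M}_2$; by Section~\ref{sect:constants} it is minor equivalent to $\mathbf{M}_2$, and since $\QHM(3)$ is a minor condition this change does not affect satisfaction. Assume for contradiction that $p_0, p_1, p_2, p_3$ is such an idempotent $\QHM(3)$-chain. The outer identities $p_0(x,y,z) \approx p_0(x,x,x)$ and $p_3(x,y,z) \approx p_3(z,z,z)$, combined with idempotence, force $p_0$ and $p_3$ to be the first and third projections, so $p_1(x,y,y) = x$ and $p_2(x,x,y) = y$. Now I invoke the fact that $p_1$ and $p_2$ preserve $\leq_2$: since $(0,0,1) \leq_2 (0,1,1)$ componentwise, applying $p_1$ gives $p_1(0,0,1) \leq_2 p_1(0,1,1) = 0$, hence $p_1(0,0,1) = 0$. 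The middle link $p_1(x,x,y) \approx p_2(x,y,y)$ then yields $p_2(0,1,1) = 0$, while $p_2(0,0,1) = 1$; applying $p_2$ to the same pointwise inequality $(0,0,1) \leq_2 (0,1,1)$ therefore forces $1 \leq_2 0$, a contradiction.

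The main subtlety is choosing the correct rearrangements of arguments in the positive direction, since the non-projection case of $f_0^\infty$ fires only on the pattern $(x,y,x)$; one must route the two ``equalisable'' arguments of each $p_i$ into the first and third slots of $f_0^\infty$ so that the central link $p_1(x,x,y) = p_2(x,y,y)$ produces the same value $x \vee_3 y$ from both sides. Once the indexing is right, both halves reduce to short direct calculations, with the impossibility half amounting to a single application of $\leq_2$-preservation to a carefully chosen pair of tuples.
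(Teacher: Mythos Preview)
Your proof is correct and follows essentially the same approach as the paper. The positive half is literally identical: the paper also takes $p_1(x,y,z)=f_0^\infty(x,z,y)$ and $p_2(x,y,z)=f_0^\infty(z,x,y)$ and verifies the middle link via $f_0^\infty(x,y,x)=x\vee_3 y$. For the negative half the paper runs the same $\leq_2$-chain argument on the mirror pair of tuples $(1,0,0)\leq_2(1,1,0)$, obtaining $1=p_1(1,0,0)\leq_2 p_1(1,1,0)=p_2(1,0,0)\leq_2 p_2(1,1,0)=0$; your version with $(0,0,1)\leq_2(0,1,1)$ is the symmetric variant. One small remark: the detour through the idempotent reduct is harmless but unnecessary, since every operation preserving both $C_3$ and $\leq_2$ is already idempotent (the diagonal is a unary self-dual operation fixing $\{0,1\}$, hence the identity); the paper uses this implicitly.
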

\begin{proof}
For $x,y,z \in \{0,1,2\}$, define
\begin{align*}
p_0(x,y,z) & \coloneqq f_0^{\infty}(x,x,x) &
p_1(x,y,z) & \coloneqq f_0^{\infty}(x,z,y) \\
p_2(x,y,z) & \coloneqq f_0^{\infty}(z,x,y) &
p_3(x,y,z) & \coloneqq f_0^{\infty}(z,z,z).
\end{align*}
Then $p_0,p_1,p_2,p_3$ witness that ${\mathbf B}_{\infty}$ satisfies $\QHM(3)$: in particular, we have 
\begin{align*}
p_1(x,x,y) = f_0^{\infty}(x,y,x) = x\vee_3 y  = f_0^{\infty}(y,x,y) = p_2(x,y,y). 
\end{align*}
Suppose for contradiction that ${\mathbf M}_2$ has
operations $p_0,p_1,p_2,p_3$ that witness $\QHM(3)$. Then 
\begin{align*} 1 = p_0(1,1,0) & = p_1(1,0,0) \\
& \leq_2 p_1(1,1,0) = p_2(1,0,0) \\
& \quad \quad \quad \quad \quad \! \quad \leq_2 p_2(1,1,0) = p_3(1,0,0) = 0
\end{align*}
which is a contradiction. 
\end{proof}

This implies that for all $k,l \geq 2$,
there is no minor-preserving map from
${\mathbf B}_k$ to ${\mathbf M}_l$, because  ${\mathbf B}_\infty \subseteq {\mathbf B}_k$
and ${\mathbf M}_l \subseteq {\mathbf M}_2$.

\subsection{The Final Picture}
In this section we complete the proof of the following theorem. 

\begin{figure}
\begin{footnotesize}
\centering
\begin{tabular}{r|lllllllll}
& ${\mathbf L}_3 \not \models$ & $\mathbf{TL}_2 \not \models$ & $\mathbf{TN}  \not \models$ & ${\mathbf W}  \not \models$ & ${\mathbf D} \not \models$ & ${\mathbf Q} \not \models$ & ${\mathbf M}_n \not \models$ & ${\mathbf B}_n \not \models$ \\
\hline
${\mathbf L}_3 \models $ & & $\Sigma_2$ & $\Sigma_2$ & \text{Mal'cev} & $\Sigma_2$ & \text{Mal'cev} & \text{Mal'cev} & \text{Mal'cev} \\
$\mathbf{TL}_2 \models $ & \text{minority} & &  \text{minority} & \text{minority} & & \text{minority} & \text{minority} & \text{minority} \\
$\mathbf{TN} \models$ & $\text{majority}$ & $\text{majority}$ & & $\text{majority}$ & & $\text{majority}$ & $\text{majority}$ & $\text{majority}$ \\
${\mathbf W} \models$ & $\WNU(3)$ & $\Sigma_2$ & $\Sigma_2$ & & $\Sigma_2$ & \\
${\mathbf D} \models$ & $\text{majority}$ & $\text{majority}$ & $\text{minority}$ & $\text{minority}$ & & $\text{minority}$  & $\text{minority}$  & $\text{minority}$  \\
${\mathbf Q} \models$ & $\WNU(3)$ & $\Sigma_2$ & $\Sigma_2$ & $g\Sigma_3$ & $\Sigma_2$ \\
${\mathbf M}_n \models$ & $\WNU(3)$ & $\Sigma_2$ & $\Sigma_2$ & $g\Sigma_3$ & $\Sigma_2$ & $\QJ(4)$ \\
${\mathbf B}_n \models$  & $\WNU(3)$ & $\Sigma_2$ & $\Sigma_2$ & $g\Sigma_3$ & $\Sigma_2$ & $\QJ(4)$ & $\QHM(3)$ \\
${\mathbf C}_3 \models$ & $\text{minority}$ & $\Sigma_2$ & $ \text{minority}$ & \text{minority} & $\Sigma_2$ & \text{minority} & \text{minority} & \text{minority} 
\end{tabular}
\end{footnotesize}
\caption{The minor conditions that justify that the existence of a minor-preserving map orders the clones of self-dual operations as depicted in Figure~\ref{fig:pict}.}
\label{fig:sep2}
\end{figure}

\begin{theorem}\label{thm:main}
The lattice of clones  of self-dual operations  factored by minor equivalence and ordered by the existence of minor-preserving maps, is a countably infinite lattice, 
and is exactly of the form as described in Figure~\ref{fig:pict}. 
\end{theorem}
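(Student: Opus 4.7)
The plan is to deduce Theorem~\ref{thm:main} by combining three ingredients: the complete classification of self-dual clones up to equality due to Zhuk~\cite{Zhuk15} (Figure~\ref{fig:Dima}), the collapses proved in Section~\ref{sect:collapse}, and the separations proved in Section~\ref{sect:separation}. I would fix as representatives of the tentative equivalence classes the clones labelled in Figure~\ref{fig:pict}, and then argue that (a) every self-dual clone in Figure~\ref{fig:Dima} is minor equivalent to exactly one representative, and (b) the induced order on representatives is exactly the one drawn.

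For (a), I would walk through Zhuk's lattice bottom-up, invoking in turn: Corollary~\ref{cor:idempotentReduct}, which lets one pass to idempotent reducts and identifies $\mathbf C_3$ with $\mathbf C_3^0$ and $\mathbf L_3$ with $\mathbf L_3^0$; the $\sigma$-duality of Section~\ref{sec:duals}, where $f \mapsto f^*$ is a minor-preserving bijection, so that the left and right wings are mirror images of each other and only the spine and one wing need to be analysed; Corollary~\ref{cor:PQcollapse} for the countable chain between $\mathbf Q$ and $\mathbf P$; Theorem~\ref{thm:collapse2} for the uncountable interval between $\bBBB$ and $\mathbf M_\infty$; the lemmas of the Double Collapse subsection identifying $\mathbf M$ with $\mathbf M_2$ and with $\mathbf N_2$; Propositions~\ref{prop:tl-l}, \ref{prop:tn-dm}, \ref{prop:td-d}, and~\ref{prop:conservativeCollapse} together with the lemma on $\mathfrak K_3$, $\mathfrak K_3^{C_3}$, $\mathfrak{TL}_\leq$, $\mathfrak L_\leq$, for the remaining identifications in the spine and upper wing. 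Cross-checking against Figure~\ref{fig:Dima} shows that every clone in Zhuk's lattice is hit.

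For (b), I would use the table in Figure~\ref{fig:sep2}: for each ordered pair $(\mathbf C, \mathbf D)$ of distinct representatives with $\mathbf C \not\leq_{\operatorname{minor}} \mathbf D$, the table provides an idempotent strong linear Mal'cev condition satisfied by $\mathbf C$ but not by $\mathbf D$. All but one of these conditions are classical ($\Sigma_2$, Mal'cev, minority, majority, $\WNU(3)$, $\QJ(4)$, $\QHM(3)$, $\QNU(k+1)$) and are verified in Proposition~\ref{Prop:Atoms}, Proposition~\ref{prop:qj}, Proposition~\ref{prop:qnu-sep}, and Proposition~\ref{prop:qhm-sep}; the one new condition, the guarded 3-cyclic condition $g\Sigma_3$, is handled in Theorem~\ref{thm:gsigma3}. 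The right-wing entries are obtained from the left-wing ones by composing with the minor-preserving bijections $f \mapsto f^*$ from Section~\ref{sec:duals}. Edges of the Hasse diagram that correspond to genuine strict comparabilities are in turn witnessed either by clone inclusions visible in Figure~\ref{fig:Dima} (which immediately yield minor-preserving maps) or by the explicit pp-constructions of Section~\ref{sect:collapse}.

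The countability of the quotient and its lattice structure are then read off from Figure~\ref{fig:pict}: the representatives form a countable set (finitely many on the spine, together with the two families $\{\mathbf B_n\}_{n \geq 2}$ and $\{\mathbf M_n\}_{n \geq 2}$ and their $\sigma$-duals), and meets and joins are visible in the diagram. The main obstacle is not any single deep argument but the bookkeeping: making sure the representatives really do exhaust Zhuk's lattice under the collapses, and that every pair of incomparable classes in Figure~\ref{fig:pict} is separated by some entry of Figure~\ref{fig:sep2}. Transitivity of $\leq_{\operatorname{minor}}$ reduces the separations one has to verify to the covering non-edges, but the number of checks remains substantial and must be carried out systematically with reference to both figures.
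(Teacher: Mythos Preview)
Your proposal is correct and follows essentially the same approach as the paper: both arguments assemble Theorem~\ref{thm:main} from Zhuk's classification, the collapses of Section~\ref{sect:collapse}, and the separating minor conditions tabulated in Figure~\ref{fig:sep2}, with the $\sigma$-duality of Section~\ref{sec:duals} handling the right wing. The paper's written proof is terser, concentrating almost entirely on step~(b) (since the collapses are already recorded as corollaries), but the logical structure is the same as what you outline.
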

\begin{proof}
We use the minor conditions as indicated in 
the table of Figure~\ref{fig:sep2}. 
In Proposition~\ref{prop:qnu-sep}
and Proposition~\ref{prop:qhm-sep} we proved that the clones
${\mathbf M}_n$ and ${\mathbf B}_n$, for $n \in \{2,3,\dots,\infty\}$, form two descending chains as displayed in Figure~\ref{fig:sep2}. 
The restriction of this table to the clones
${\mathbf L}_3$, $\mathbf{TL}_2$, ${\mathbf T}$, and ${\mathbf T}$ has already been described in Figure~\ref{fig:sep}. We now describe how to extend this table to the remaining clones
${\mathbf W}$, ${\mathbf D} \coloneqq \Pol(\bD)$,
${\mathbf Q}$, ${\mathbf M}_n$, ${\mathbf B}_n$, and ${\mathbf C}_3$. 

The clone ${\mathbf D} = \Pol(\{0,1,2\};C_3,C_2)$ contains $\mathbf{TN}$ and $\mathbf{TL}_2$, 
and therefore contains the minority operation $\plus$ and the majority operation $m$. 
It does not 
satisfy $\Sigma_2$ because operations on $\{0,1,2\}$ that satisfy $\Sigma_2$ cannot preserve $C_2$. 

The clone ${\mathbf Q}$ contains ${\mathbf W}$ and therefore contains a binary symmetric operation and a ternary weak near unanimity operation. Moreover, the proof of Theorem~\ref{thm:gsigma3} shows that 
${\mathbf Q}$ satisfies $g\Sigma_3$, which is not satisfied by ${\mathbf W}$. 
Moreover, ${\mathbf Q}$ does not satisfy $\QJ(4)$, which is satisfied by ${\mathbf M}_n$ and ${\mathbf B}_n$ for all $n \in \{2,3,\dots,\infty\}$ (Proposition~\ref{prop:qj}). 

The clone ${\mathbf M}_n$, for each $n \in \{2,3,\dots,\infty\}$, contains ${\mathbf Q}$ and therefore satisfies $\QNU(3)$, $\Sigma_2$, and $g\Sigma_3$. It does not satisfy $\QHM(3)$ as we have seen in Proposition~\ref{prop:qhm-sep}. 
For each $n \in \{2,3,\dots,\infty\}$, 
the clone ${\mathbf B}_n$ contains 
${\mathbf M}_n$, but satisfies the additional minor condition $\QHM(3)$. It is straightforward to verify that any minority operation on $\{0,1,2\}$ does not preserve the relation $B_2$, so 
${\mathbf B}_n$ does not satisfy the minority condition. The clone ${\mathbf C}_3$ contains all the clones discussed so far; since each of these clones does not satisfy some minor condition discussed so far, it follows that 
${\mathbf C}_3$ does not have a minor-preserving map to any of these clones. 
\end{proof}

\section{Concluding Remarks}
There are $2^\omega$ many clones of self-dual operations;
we showed that there is an equally large set of clones  of self-dual operations that are pairwise homomorphically incomparable. However,
when considered up to minor equivalence the self-dual clones fall into countably many classes, and the resulting lattice can be described completely. 
Our work illustrates the potential of minor equivalence for a systematic study of clones on finite domains. We make the following provocative conjecture. 

\begin{conjecture}
The minor equivalence relation on clones on finite domains has only countably many classes. 
\end{conjecture}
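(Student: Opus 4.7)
The plan would be to reduce the conjecture to the following auxiliary claim: every clone $\mathbf C$ on a finite set is minor-equivalent to $\Pol(\mathfrak B)$ for some relational structure $\mathfrak B$ of finite signature. The auxiliary claim immediately implies the conjecture via Theorem~\ref{thm:wonderland}, because up to isomorphism there are only countably many finite sets, and for each of them only countably many finite-signature relational structures.

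To prove the auxiliary claim I would first invoke the compactness fact recalled in Section~\ref{sect:minor-cond}: on a finite domain, $\mathbf C \leq_{\operatorname{minor}} \mathbf D$ if and only if every minor condition satisfied by $\mathbf C$ is also satisfied by $\mathbf D$. Thus the minor-equivalence class of $\mathbf C$ is determined by the set $M(\mathbf C)$ of minor conditions it satisfies, and the task becomes: exhibit, for each $\mathbf C$, a finite subset $\Sigma \subseteq M(\mathbf C)$ that already axiomatises $M(\mathbf C)$ within the class of clones on finite sets. Any relation $R_\Sigma$ on the domain $A$ of $\mathbf C$ whose polymorphisms are exactly the operations satisfying $\Sigma$ would then furnish a finite-signature structure $\mathfrak B$ on $A$ with $\Pol(\mathfrak B)$ minor-equivalent to $\mathbf C$.

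The main obstacle is precisely this finite axiomatisability step, which is essentially the full content of the conjecture. A natural attack is to proceed domain by domain: for each fixed finite $A$, show that the $\leq_{\operatorname{minor}}$-lattice of clones on $A$ has only countably many classes. The present paper accomplishes this for the uncountable interval of subclones of the single maximal clone $\mathbf C_3$ on $\{0,1,2\}$, so a natural first test case is to complete the analogous analysis for the remaining seventeen maximal clones of $\{0,1,2\}$, and then attempt a Rosenberg-style case distinction on the type of minimal subclones (Mal'cev, majority, semilattice, minority, or essentially unary) to extend to higher cardinalities. The recipe to emulate is the one developed here: pp-constructions witness the collapses (using Theorem~\ref{thm:wonderland}) while explicit idempotent strong linear Mal'cev conditions witness the separations.

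The hard part is that any proof must exploit the finiteness of the domain at some crucial step, since on infinite-domain clones the analogous statement fails (there are already $2^{\aleph_0}$ minor-equivalence classes). I would expect the deep algebraic structure theory underlying the Bulatov--Zhuk solution of the Feder--Vardi dichotomy — in particular the existence of strong Mal'cev conditions forcing bounded-width or few-subpowers-type structure — to play a central role in bounding how intricate the minor theory of a finite-domain clone can be; but we are aware of no ready-made tool that closes the gap between ``countably many minor conditions'' and ``countably many minor-equivalence classes''.
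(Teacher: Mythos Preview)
The statement you address is presented in the paper as an open \emph{conjecture} (in the Concluding Remarks), not as a theorem; the paper offers no proof and explicitly labels it ``provocative''. There is therefore no argument in the paper to compare your proposal against.

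Your proposal is not a proof but a sketch of a possible line of attack, and you say so yourself: the reduction to the auxiliary claim (every finite-domain clone is minor-equivalent to the polymorphism clone of a finite-signature structure) and then to a finite axiomatisability statement gains no real ground, since --- as you correctly observe --- the finite axiomatisability step is ``essentially the full content of the conjecture''. One additional technical wrinkle: the sentence ``any relation $R_\Sigma$ on the domain $A$ of $\mathbf C$ whose polymorphisms are exactly the operations satisfying $\Sigma$'' is not well-posed. The set of operations on $A$ that can serve as witnesses for a fixed minor condition $\Sigma$ is in general not a clone (it need not be closed under composition), so there is no relation whose polymorphism clone coincides with that set; and even if $\Sigma$ axiomatises $M(\mathbf C)$ among finite-domain clones, this only pins down the clones $\mathbf D$ with $\mathbf C \leq_{\operatorname{minor}} \mathbf D$, not those minor-equivalent to $\mathbf C$, so the passage to a concrete finite-signature $\mathfrak B$ with $\Pol(\mathfrak B)$ minor-equivalent to $\mathbf C$ still requires a separate argument.

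In short: the conjecture is open, the paper does not prove it, and your proposal does not prove it either.
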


An interesting first step towards proving this conjecture would be the verification that 
there are only countably many clones on a three-element set up to minor equivalence. 

All clones that contain an operation with an image of size at most two are minor equivalent to a clone on a two-element set  (these clones comprise the uncountable set of clones described by Yanov and Muchnik~\cite{YanovMuchnik}), and it is known that there are only countably many of those by Post's classification. 
By the discussion in Section~\ref{sect:constants}, 
it therefore suffices to classify \emph{idempotent} clones on three elements with respect to minor equivalence. 

\begin{conjecture}\label{conj:maximal}
Every clone on three elements that does not admit a minor-preserving map from 
the clone ${\mathbf I}$ of all idempotent operations on $\{0,1,2\}$ 
has a minor-preserving map to one of the following three clones on $\{0,1,2\}$: 
\begin{enumerate}
\item the clone ${\mathbf C}_2 \coloneqq \Pol(\{0,1\};\{(0,1),(1,0)\})$,
\item the clone ${\mathbf C}_3$, whose subclones were studied in this article, 
\item the clone ${\mathbf T}_3 \coloneqq \Pol(\{0,1,2\};<)$.
\end{enumerate}
\end{conjecture}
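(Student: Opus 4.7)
The plan is to treat the conjecture as a structural refinement of the Bulatov--Zhuk dichotomy over $\{0,1,2\}$, first reducing to idempotent clones and then isolating the three target clones in the conclusion.

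First I would reduce to the idempotent case: by Section~\ref{sect:constants} every polymorphism clone of a core relational structure on $\{0,1,2\}$ is minor equivalent to its idempotent reduct, and every finite clone is minor equivalent to one of this form, so it suffices to prove the statement assuming $\mathbf{C}\subseteq\mathbf{I}$. Next I would dispose of the ``no Taylor term'' case: if $\mathbf{C}$ has no Taylor term then, by \cite{BulatovFVConjecture,ZhukFVConjecture}, $\mathbf{C}$ admits a minor-preserving map to the clone $\mathbf{P}_2$ of projections on $\{0,1\}$; since $\mathbf{P}_2\subseteq\mathbf{C}_2$ and the inclusion is minor-preserving, composition yields $\mathbf{C}\leq_{\operatorname{minor}}\mathbf{C}_2$, placing us in case~(1). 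From here on I may therefore assume $\mathbf{C}$ is Taylor, and in particular contains an idempotent cyclic operation of every prime arity.

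The substantive step is to classify the remaining Taylor clones $\mathbf{C}\subseteq\mathbf{I}$ with $\mathbf{I}\not\leq_{\operatorname{minor}}\mathbf{C}$ and show that each admits a minor-preserving map to $\mathbf{C}_3$ or to $\mathbf{T}_3$. A natural strategy is to look, pair by pair, at the restrictions of the binary symmetric operations of $\mathbf{C}$ to the three two-element subsets of $\{0,1,2\}$, in the spirit of the coloured-edge analysis of Bulatov alluded to in the introduction: each such restriction is either a semilattice operation or a projection, and one can hope that the combinatorics of which pairs carry which type forces either a rotational invariant (captured by a map to $\mathbf{C}_3$) or an order invariant (captured by a map to $\mathbf{T}_3$), while the remaining configurations can be used, via Siggers-type terms and Barto--Kozik cyclic terms, to produce explicit witnesses for every idempotent minor condition of $\mathbf{I}$ and hence minor equivalence with $\mathbf{I}$.

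The main obstacle is the uniformity required in this last step. Unlike Theorem~\ref{thm:main}, whose proof relied on Zhuk's complete description~\cite{Zhuk15} of the subclones of $\mathbf{C}_3$, no analogous classification is available for the subclones of the other seventeen maximal clones in Jabloński's list~\cite{Jablonskij}. A successful proof must therefore either supply such a description or bypass it with a uniform construction showing that, whenever $\mathbf{C}$ admits no minor-preserving map to any of $\mathbf{C}_2$, $\mathbf{C}_3$, or $\mathbf{T}_3$, one can build inside $\mathbf{C}$ a ``master'' idempotent term simultaneously witnessing every height-one identity that admits an idempotent realisation on three elements. Producing such a term, or equivalently carrying out a Post-style classification of idempotent clones on $\{0,1,2\}$ up to minor equivalence, is precisely what keeps the statement at the level of a conjecture.
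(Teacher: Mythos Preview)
The statement in question is Conjecture~\ref{conj:maximal}, presented in the paper's concluding remarks as an open problem; the paper offers no proof and makes no claim that one exists. There is therefore nothing to compare your proposal against.

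Your write-up is itself not a proof but a strategic outline, and you are candid about this: the final paragraph explicitly identifies the missing ingredient (either a Post-style classification of idempotent clones on three elements up to minor equivalence, or a uniform construction of a master term) and concludes that this gap ``is precisely what keeps the statement at the level of a conjecture.'' That assessment matches the paper's own stance. The preliminary reductions you describe (passing to the idempotent reduct via Section~\ref{sect:constants}; handling the non-Taylor case by mapping to projections and then including into $\mathbf{C}_2$) are correct and standard. One small inaccuracy: the Barto--Kozik cyclic term theorem guarantees cyclic terms of all prime arities greater than the size of the domain, not of \emph{every} prime arity; this does not affect your outline since you never actually use the claim.
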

The three clones in Conjecture~\ref{conj:maximal} also appear as polymorphism clones of directed graphs that are maximal with respect to pp-constructability in the class of all finite directed graphs~\cite{maximal-digraphs}. 
If Conjecture~\ref{conj:maximal} is true, then the results of the present article show that in order to classify all clones over $\{0,1,2\}$ up to minor-equivalence it suffices to study the clones that have a minor-preserving map to  
${\mathbf C}_2$ or to 
${\mathbf T}_3$. 
If we are only interested in proving that there are countably many such clones, we can even focus on the clones with a minor-preserving map to ${\mathbf T}_3$, because all other clones have a Mal'cev operation~\cite{maximal-digraphs}
and because Bulatov~\cite{BulatovOnTheNumber} proved that there are only finitely many clones on $\{0,1,2\}$ containing a Mal'cev operation. Aichinger, Mayr, and McKenzie~\cite{AichiMayrMcKenzie} proved more recently that there are only countably many clones on sets of the form $\{0,\dots,n-1\}$ containing a Mal’cev operation.

\section*{Acknowledgements}
The authors thank the referees for the many very good comments that helped to improve the readability of the proofs.

\begin{funding}
The first two authors have received funding from the European Research Council (ERC Grant
Agreement no. 681988, CSP-Infinity). The third author has received funding 
from the European Research Council (ERC Grant
Agreement no. 771005, CoCoSym).
\end{funding}

\bibliographystyle{abbrv}
\def\cprime{$'$} \def\cprime{$'$} \def\cprime{$'$}

\end{document}